\newtheorem{thm}{Theorem}[section]
\newtheorem{prop}[thm]{Proposition}
\newtheorem{lem}[thm]{Lemma}
\newtheorem{cor}[thm]{Corollary}
\theoremstyle{definition}
\newtheorem{defn}[thm]{Definition}
\newtheorem{rem}[thm]{Remark}
\newtheorem{ques}[thm]{Question}
\newcommand{\abs}[1]{\lvert{#1}\rvert}
\newcommand{\bigabs}[1]{\bigl|{#1}\bigr|}
\renewcommand{\bar}[1]{\overline{#1}}
\newcommand{\boundary}{\partial}
\newcommand{\bigset}[2]{ \bigl\{ \, {#1} \bigm| {#2} \, \bigr\} }
\renewcommand{\emptyset}{\varnothing}
\newcommand{\field}[1]{\mathbb{#1}}
\newcommand{\Z}{\field{Z}}
\newcommand{\R}{\field{R}}
\newcommand{\Q}{\field{Q}}
\newcommand{\N}{\field{N}}
\DeclareMathOperator{\Aut}{Aut}
\DeclareMathOperator{\dev}{dev}
\DeclareMathOperator{\CAT}{CAT}
\DeclareMathOperator{\diam}{diam}
\newcommand{\showcomments}{yes}
\newsavebox{\commentbox}
\begin{document}

\title{Distortion of surfaces in graph manifolds}

\author{G. Christopher Hruska and Hoang Thanh Nguyen}
\address{Department of Mathematical Sciences\\
University of Wisconsin--Milwaukee\\
P.O.~Box 413\\
Milwaukee, WI 53201\\
USA}
\email{chruska@uwm.edu, nguyen36@uwm.edu}

\date{\today}

\begin{abstract}
Let $S \looparrowright N$ be an immersed  horizontal surface in a $3$--dimensional graph manifold.
We show that the fundamental group of the surface $S$ is quadratically distorted whenever the surface is virtually embedded (\emph{i.e.}, separable) and is exponentially distorted when the surface is not virtually embedded.
\end{abstract}

\subjclass[2010]{%
57M50, 
20F65,  
20F67} 

\keywords{graph manifold, separable subgroup, distortion}

\maketitle

\section{Introduction}
\label{sec:Introduction}

In the study of $3$--manifolds, much attention has focused on the distinction between surfaces that lift to an embedding in a finite cover and those that do not.
A $\pi_1$--injective immersion $S \looparrowright N$ of a surface $S$ into a $3$--manifold $N$ is a \emph{virtual embedding} if (after applying a homotopy) the immersion lifts to an embedding of $S$ into a finite cover of $N$.
Due to work of Scott and Przytycki--Wise \cite{Scott78,PrzytyckiWise14Separability}, virtual embedding is equivalent to separability of the surface subgroup $\pi_1(S)$ in $\pi_1(N)$.

A major part of the solution of the virtual Haken conjecture is Wise and Agol's theorem that every immersed surface in a finite volume complete hyperbolic $3$--manifold is virtually embedded \cite{WiseHierarchies,Agol13}.
In contrast, Rubinstein--Wang \cite{RW98} constructed non--virtually embedded surfaces $g\colon S \looparrowright N$ in many $3$--dimensional graph manifolds $N$.  The examples of Rubinstein--Wang are \emph{horizontal} in the sense that in each Seifert component $M$ of $N$, the intersection $g(S)\cap{M}$ is transverse to the Seifert fibration. Furthermore, they introduced a combinatorial invariant of horizontal surfaces called dilation, which they use to completely characterize which horizontal surfaces are virtually embedded.

If $H \le G$ are groups with finite generating sets $\mathcal{S}$ and $\mathcal{T}$,
the \emph{distortion} of $H$ in $G$ is given by
\[
   \Delta_{H}^G(n)
     = \max \bigset{\abs{h}_{\mathcal{T}}}{\text{$h\in H$ and $\abs{h}_{\mathcal{S}}\leq n$} }.
\]
Distortion does not depend on the choice of finite generating sets $\mathcal{S}$ and $\mathcal{T}$ (up to a natural equivalence relation). 
The purpose of this article is to address the following question of Dani Wise:

\begin{ques}
\label{ques:Wise}
Given a $3$--dimensional graph manifold $N$, which surfaces in $N$ have nontrivial distortion?
\end{ques}

Question~\ref{ques:Wise} arises naturally in the study of cubulations of $3$--manifold groups. The typical strategy for constructing an action of the fundamental group on a $\CAT(0)$ cube complex is to find a suitable collection of immersed surfaces and then to consider the $\CAT(0)$ cube complex dual to that collection of surfaces (see for instance \cite{Wise12}).

Whenever a group $G$ acts properly and cocompactly on a $\CAT(0)$ cube complex $X$, the stabilizer of each hyperplane must be an undistorted subgroup of $G$, since hyperplanes are convex.  
Hagen--Przytycki \cite{HagenPrzytycki15} show that chargeless graph manifolds do act cocompactly on $\CAT(0)$ cube complexes.
It is clear from their construction that many graph manifolds contain undistorted surface subgroups.

However the situation for horizontal surfaces turns out to be quite different.
Our main result states that horizontal surfaces in graph manifolds always have a nontrivial distortion, and this distortion is directly related to virtual embedding in the following sense:

\begin{thm}
\label{thm:main thm}
Let $S \looparrowright N$ be a horizontal surface properly immersed in a graph manifold $N$. The distortion of $\pi_1(S)$ in $\pi_1(N)$ is quadratic if $S$ is virtually embedded, and exponential if $S$ is not virtually embedded.
\end{thm}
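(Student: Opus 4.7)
The plan is to establish four bounds: quadratic upper and lower when $S$ is virtually embedded, and exponential upper and lower when it is not. Distortion is preserved up to equivalence under passing to finite-index sub- and supergroups, so in the virtually embedded case I will assume outright that $S$ is already embedded in $N$. Throughout, I would work with the graph-of-groups decomposition of $\pi_1(N)$ along the JSJ tori, with vertex groups $\pi_1(M_v)$ the Seifert pieces and edge groups $\Z^2$ the boundary tori, analyzing how $\pi_1(S)$ sits in each vertex group and then how the local pictures glue across edges.

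For the quadratic bounds, the key structural fact is that each vertex group fits in a central extension $1 \to \Z \to \pi_1(M_v) \to \pi_1(\Sigma_v) \to 1$ with the fiber $t$ central. The horizontal surface meets $M_v$ in a subsurface $F_v$, and $\pi_1(F_v) \subset \pi_1(M_v)$ is quadratically distorted: the classical identity $\prod [a_i,b_i]=t^e$ (or its boundary analogue) lets one realize $t^{m^2}$ by an ambient word of length $O(m)$ while the surface-word length is $\Theta(m^2)$. This single-piece calculation already yields the quadratic lower bound unconditionally. For the quadratic upper bound in the embedded case, I would take a geodesic $w$ in $\pi_1(N)$ of length $n$ representing an element of $\pi_1(S)$, decompose it into syllables living in the $O(n)$ vertex groups it visits, bound each syllable's surface length by the quadratic bound from the corresponding Seifert piece, and then verify that the boundary slopes of the $F_v$ match up across the JSJ tori without amplification -- this is where embeddedness (or virtual embeddedness) is essential.

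For the exponential bounds, the driver is the Rubinstein--Wang dilation: when $S$ is not virtually embedded, some oriented cycle $\mathcal{C}$ in the JSJ graph has dilation product $\lambda$ with $\abs{\lambda}>1$. For the lower bound I would build, for each $m$, an element $\gamma_m \in \pi_1(S)$ by concatenating arcs of $F_v \cap T_e$ along $\mathcal{C}$ repeated $m$ times; the ambient length of $\gamma_m$ is $O(m)$, but its length on $S$ is $\Theta(\lambda^m)$ because each passage through a JSJ torus multiplies the transverse slope of $F_v$ by the local dilation factor. For the exponential upper bound the same multiplicative principle runs in reverse: a length-$n$ word in $\pi_1(N)$ crosses at most $n$ tori, so total slope amplification is bounded by $\lambda^n$, and combined with the quadratic piecewise bound this caps the surface length by $O(n^2 \lambda^n)$. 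The main obstacle, which I expect to take most of the work, is translating the combinatorial Rubinstein--Wang dilation into geometric slope-control on lifts of $S$ in the universal cover, so that the upper and lower exponential bounds actually match, and simultaneously proving the complementary statement that in the virtually embedded case the dilation around every JSJ cycle is $\pm 1$, so that no exponential stretching occurs and the quadratic upper bound closes up.
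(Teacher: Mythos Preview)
Your overall plan---four separate bounds, with Rubinstein--Wang dilation distinguishing the two cases---matches the paper, and your exponential lower-bound sketch is close in spirit to the paper's construction. But there is a genuine gap in the quadratic lower bound.

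The single-piece argument you propose does not work. After passing to a simple finite cover (Theorem~\ref{thm:KLsimple}) each Seifert piece is a \emph{trivial} bundle $M_v \cong F_v \times S^1$, so there is no relation $\prod[a_i,b_i]=t^e$ with $e\ne 0$ to exploit; indeed the horizontal block $B\looparrowright M_v$ is \emph{undistorted} there (Proposition~\ref{prop:linear distortion}). Quadratic distortion is not a single-piece phenomenon at all: it arises from the interaction between adjacent pieces, namely that the fiber direction of $M_v$ is transverse to the fiber direction of an adjacent piece $M_{v'}$ inside their common JSJ torus. The paper's lower bound (Section~\ref{sec:LowerBound}) exploits exactly this, building a ``spiral'' in $\tilde S$ that alternates long runs along lifts of curves in $\mathcal{T}_g$ with short jumps across blocks; a recursively chosen sequence of exponents (Lemma~\ref{lem:ConstructSequence}) arranges that in $\tilde N$ each long run is nearly cancelled by a fiber segment of the next piece, so a doubled spiral has linear ambient length but at least quadratic---and, when $w_\gamma>1$, exponential---length in $\tilde S$. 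Your exponential construction is essentially a version of this, so the fix is to run the same spiral argument even when all dilations equal $1$, rather than appealing to a nonexistent single-piece mechanism. The same misconception colors your upper-bound framing: the paper's quadratic upper bound (Section~\ref{sec:UpperBound}) does not sum per-piece quadratic contributions, but tracks a fiber-direction offset that grows at most linearly at each JSJ crossing (Lemmas~\ref{lem:Inequality1} and~\ref{lem: application rule of sines}) and then sums that over the $O(n)$ crossings.
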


The main tool used in the proof of this theorem is a simple geometric interpretation of Rubinstein--Wang's dilation in terms of slopes of lines in the JSJ planes of the universal cover.

We note that, throughout this paper, the term ``graph manifold'' specifically excludes two trivial cases: Seifert manifolds and Sol manifolds.
By Proposition~\ref{prop:linear distortion} horizontal surfaces in Seifert manifolds are always undistorted.  Distortion of surfaces in Sol manifolds is not addressed in this article.

Although Theorem~\ref{thm:main thm} deals only with horizontal surfaces in graph manifolds, understanding these is the key to a general understanding of all $\pi_1$--injective surfaces in nongeometric $3$--manifolds.
The nongeometric $3$--manifolds include both graph manifolds and also ``mixed'' type $3$--manifolds, those whose JSJ decomposition contains at least one hyperbolic component and at least one JSJ torus.

Recently Yi Liu has used Rubinstein--Wang's work on virtual embedding of horizontal surfaces in graph manifolds as the foundation for a study of virtual embedding of arbitrary surfaces in nongeometric $3$--manifolds \cite{Liu17}.
Similarly, in a forthcoming article, the second author uses Theorem~\ref{thm:main thm} as the foundation for a study of distortion of arbitrary surface subgroups in fundamental groups of nongeometric $3$--manifolds \cite{NguyenDistortion}.

As mentioned above,
Hagen--Przytycki \cite{HagenPrzytycki15} 
have shown that chargeless graph manifolds
act cocompactly on $\CAT(0)$ cube complexes. The cubulation they construct is dual to a family of properly immersed surfaces, none of which is entirely horizontal.  
More precisely a key property of these surfaces is that they never contain two adjacent horizontal pieces.

The following corollary shows that in order to obtain a proper, cocompact cubulation (using any possible subgroups, not necessarily just surface subgroups) all surface subgroups must be of the type used by Hagen--Przytycki.  The corollary follows from combining Theorem~\ref{thm:main thm} with \cite{NguyenDistortion}.

\begin{cor}
Let $G$ be the fundamental group of a graph manifold.  Let $\{H_1,\dots,H_k\}$ be a collection of codimension--$1$ subgroups of $G$.
Let $X$ be the corresponding dual $\CAT(0)$ cube complex.
If at least one $H_i$ is the fundamental group of a surface containing two adjacent horizontal pieces, then the action of $G$ on $X$ is not proper and cocompact.
\end{cor}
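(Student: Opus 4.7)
The plan is to argue by contradiction: I will assume that the action of $G$ on $X$ is proper and cocompact, and use the hypothesized $H_i$ together with Theorem~\ref{thm:main thm} and \cite{NguyenDistortion} to conclude that $H_i$ is distorted in $G$, contradicting the fact that hyperplane stabilizers in such an action must be undistorted.

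First, I would invoke the standard fact, already recalled in the introduction, that every hyperplane in a $\CAT(0)$ cube complex is convex, so whenever $G$ acts properly and cocompactly on $X$, the stabilizer of any hyperplane acts cocompactly on that hyperplane and is therefore quasi-isometrically embedded in $G$ by the \v{S}varc--Milnor lemma; in particular it is undistorted. In Sageev's construction of the cube complex dual to the codimension--$1$ subgroups $\{H_1,\dots,H_k\}$, each hyperplane stabilizer is commensurable with a conjugate of some $H_i$. Consequently, if the action is proper and cocompact, every $H_i$ must be undistorted in $G$.

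Next, I would apply \cite{NguyenDistortion}, which extends Theorem~\ref{thm:main thm} to arbitrary $\pi_1$--injective surfaces in nongeometric $3$--manifolds and shows in particular that any such surface in a graph manifold containing two adjacent horizontal pieces has distorted fundamental group. The mechanism reducing this to Theorem~\ref{thm:main thm} is the following: given a surface $S \looparrowright N$ with two adjacent horizontal pieces, their union $S' \subset S$ is a properly immersed horizontal subsurface of the submanifold $M' = M_1 \cup_T M_2$ formed by the two adjacent Seifert pieces glued along their shared JSJ torus $T$. Theorem~\ref{thm:main thm} applied to $S' \looparrowright M'$ forces $\pi_1(S')$ to be at least quadratically distorted in $\pi_1(M')$. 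Since $\pi_1(M') \hookrightarrow \pi_1(N)$ is undistorted by the standard Bass--Serre argument for the JSJ graph of groups, and $\pi_1(S') \hookrightarrow H_i$ is undistorted by $\pi_1$--injectivity of the subsurface $S' \subset S$, the distortion of $\pi_1(S')$ in $\pi_1(M')$ transports to a distortion of $H_i$ in $G$, contradicting the conclusion of the previous paragraph.

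The main technical obstacle I foresee is the careful bookkeeping of these undistorted inclusions along the two chains $\pi_1(S') \le H_i \le G$ and $\pi_1(S') \le \pi_1(M') \le G$, together with a clean extraction from \cite{NguyenDistortion} of the precise statement that the presence of two adjacent horizontal pieces alone suffices to produce distortion of the ambient surface subgroup. All geometric content of the corollary is concentrated in Theorem~\ref{thm:main thm} and its extension in \cite{NguyenDistortion}; once these are in hand, the remainder is a routine application of undistortedness of hyperplane stabilizers in proper cocompact actions on $\CAT(0)$ cube complexes.
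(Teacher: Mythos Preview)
Your proposal is correct and follows the same approach as the paper: the paper simply states that the corollary follows from combining Theorem~\ref{thm:main thm} with \cite{NguyenDistortion}, and your argument spells out exactly this---distorted surface subgroups cannot arise as hyperplane stabilizers in a proper cocompact cubulation. Your middle paragraph goes further than the paper by sketching the internal mechanism of \cite{NguyenDistortion}; this is plausible in outline, but the undistortedness claims for $\pi_1(S') \le H_i$ and $\pi_1(M') \le G$ require more justification than ``$\pi_1$--injectivity'' and ``standard Bass--Serre argument'' (e.g., convexity of subsurfaces with geodesic boundary, and retraction onto subtrees of the Bass--Serre tree), so it is safest to treat that paragraph as motivation and defer the actual distortion statement to \cite{NguyenDistortion} as the paper does.
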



After seeing an early version of this paper, Hung Cong Tran discovered an alternate proof of the quadratic distortion of certain horizontal surfaces whose fundamental groups can also be regarded as Bestvina--Brady kernels.  This observation is part of a broader study by Tran of the distortion of Bestvina--Brady kernels in right-angled Artin groups \cite{Tran16}.

\subsection{Connections to previous work}

In \cite{Woodhouse16}, Woodhouse exploited strong parallels between graph manifolds and tubular spaces to study actions of tubular groups on $\CAT(0)$ cube complexes.
In the tubular setting, immersed hyperplanes play the role of immersed surfaces in graph manifolds
Woodhouse extended the Rubinstein--Wang theory of dilation and found a connection between dilation and distortion. In particular, he proves that if an immersed hyperplane in a tubular group has nontrivial dilation then its distortion is at least quadratic. Inspired by Woodhouse's work, we use analogous techniques in the cleaner geometric setting of graph manifolds to obtain Theorem~\ref{thm:main thm}.

We remark that the main proof of Theorem~\ref{thm:main thm} in Sections \ref{sec:LowerBound}~and~\ref{sec:UpperBound} includes both virtually embedded and non--virtually embedded surfaces in a unified treatment that prominently uses a simple geometric interpretation of dilation in terms of ``slopes of lines'' in the Euclidean geometry of JSJ planes.
This interpretation was inspired by Woodhouse's earlier work on tubular groups.  It seems likely that the main proofs here could be translated back to the tubular setting, where they may lead to new advances in that setting as well.

However there is an alternate (shorter) proof of the quadratic distortion of virtually embedded horizontal surfaces using the following result, which combines work of Gersten and Kapovich--Leeb.  This alternate approach, while more direct, does not give any information about the structure of non--virtually embedded surfaces.

\begin{thm}[Gersten, Kapovich--Leeb]
\label{thm:FiberQuadratic}
Let $N$ be a graph manifold that fibers over the circle with fiber surface $S$.
Then $\pi_1(S)$ is quadratically distorted in $\pi_1(N)$.
\end{thm}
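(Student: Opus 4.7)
The plan is to use the fibration structure to write $\pi_1(N)$ as a semidirect product $\pi_1(S) \rtimes_\phi \Z$ and then to read off the distortion of $\pi_1(S)$ from the growth rate of the monodromy automorphism $\phi$. The fibration $S \hookrightarrow N \to S^1$ yields a split short exact sequence $1 \to \pi_1(S) \to \pi_1(N) \to \Z \to 1$, so the structure of $\pi_1(N)$ is entirely controlled by the automorphism $\phi \in \Aut(\pi_1(S))$ induced by the monodromy homeomorphism. Because $N$ is a graph manifold in our restricted sense (in particular, neither Seifert fibered nor Sol), Thurston's classification of surface homeomorphisms combined with the JSJ structure of mapping tori forces the monodromy to be reducible with periodic restriction to each piece of its canonical invariant subsurface decomposition. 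Hence, up to replacing $\phi$ by a finite power, $\phi$ is a commuting product of Dehn twists along a disjoint family of reducing curves composed with a finite-order map. In particular $\phi$ has \emph{linear growth}: for every $h \in \pi_1(S)$ we have $\bigabs{\phi^n(h)}_{\pi_1(S)} = O(\abs{n})$ as $\abs{n}\to\infty$.

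For the upper bound I would run the standard collecting argument. Fix a finite generating set $\mathcal{S}$ for $\pi_1(S)$ and take $\mathcal{T} = \mathcal{S} \cup \{t\}$ for $\pi_1(N)$, where $t$ projects to a generator of the quotient $\Z$. Given $g \in \pi_1(S)$ with $\abs{g}_{\mathcal{T}} \leq n$, sweep all $t^{\pm 1}$ to one side via the rewriting rule $st^{\pm 1} = t^{\pm 1}\phi^{\mp 1}(s)$; the total $t$-exponent cancels because $g \in \pi_1(S)$, leaving a product of at most $n$ elements of $\pi_1(S)$, each obtained from a letter of $\mathcal{S}$ by conjugation under $t^k$ with $\abs{k}\leq n$. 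Linear growth of $\phi$ gives each such factor $\mathcal{S}$-length $O(n)$, and multiplying at most $n$ such factors yields $\abs{g}_{\mathcal{S}} = O(n^2)$.

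For the matching lower bound I would pick a reducing curve $c$ along which $\phi$ twists nontrivially and a simple closed curve $h$ in $S$ crossing $c$ essentially, and consider the sequence $g_n = t^n h^n t^{-n} = \phi^n(h)^n$. The expression on the left shows $\abs{g_n}_{\mathcal{T}} \leq 2n + n\abs{h}_{\mathcal{S}} = O(n)$. To bound $\abs{g_n}_{\mathcal{S}}$ from below I would pass to the universal cover $\widetilde{S}$ and use translation length: iterating the Dehn twist along $c$ stretches $h$ linearly, so $\phi^n(h)$ has translation length $\Omega(n)$ in $\widetilde{S}$, and multiplicativity of translation length under powers gives $\phi^n(h)^n$ translation length $\Omega(n^2)$, which is a lower bound for word length in $\pi_1(S)$. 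Combining the two bounds pins down the distortion as exactly quadratic.

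The main technical obstacle is the lower bound: everything else is algebraic bookkeeping, but producing elements that actually witness at least quadratic stretch requires an honest piece of surface geometry, namely the fact that a high power of a Dehn twist on $c$ sends a transverse simple closed curve to a curve of linearly growing geodesic length on $\widetilde{S}$. Once this geometric input is in hand, the multiplicativity of translation length under powers makes the two sides of the distortion estimate match.
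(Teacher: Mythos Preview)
Your proposal is correct, and the upper-bound argument is essentially identical to the paper's: both use the Nielsen--Thurston classification together with the JSJ structure of the mapping torus to conclude that (a power of) the monodromy is a multitwist, hence has linear growth, and then invoke the standard rewriting argument (which is Gersten's Proposition~4.2) to get the quadratic upper bound on distortion.

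The lower bounds diverge. The paper does not build explicit witnesses; instead it quotes Kapovich--Leeb's theorem that every graph manifold group has at least quadratic \emph{divergence}, and then applies Gersten's general inequality (Theorem~4.1 of \cite{Gersten94Quadratic}) that in any semidirect product $H \rtimes \Z$ the divergence of the ambient group is dominated by the distortion of $H$. Your approach is instead a direct construction: pick a curve $h$ transverse to a nontrivial twist curve $c$, observe that the geodesic length of $\phi^n(h)$ grows linearly, and use multiplicativity of translation length to get $\bigl|\phi^n(h)^n\bigr|_{\pi_1(S)} \succeq n^2$. This is more elementary and self-contained---it avoids the divergence machinery and the Kapovich--Leeb computation entirely---at the cost of needing the surface-geometric fact that multitwists stretch transverse curves linearly. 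The paper's route is shorter to write down (two citations) but imports a substantial result. Both are valid; yours is arguably the more hands-on proof, and it has the advantage of exhibiting the distorting elements explicitly. One small point you should make explicit: the existence of at least one nontrivial twist curve $c$ follows from the standing hypothesis that $N$ is not Seifert fibered, since a finite-order monodromy would give a Seifert-fibered mapping torus.
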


Kapovich--Leeb implicitly use the quadratic upper bound (without stating it explicitly) in \cite{KapovichLeeb98}, where they attribute it to Gersten \cite{Gersten94Quadratic}.  The quadratic lower bound also follows easily from results of Gersten and Kapovich--Leeb \cite{Gersten94Quadratic,KapovichLeeb98}, but was not specifically mentioned by them.
The discussion of Theorem~\ref{thm:FiberQuadratic} in \cite{KapovichLeeb98} and its precise derivation from \cite{Gersten94Quadratic} is brief and was not the main purpose of either article.
For the benefit of the reader we have included a more detailed exposition of Kapovich--Leeb's elegant proof of Theorem~\ref{thm:FiberQuadratic} in Section~\ref{sec:FiberQuadratic}, which relies on Thurston's geometric classification of $3$--manifolds that fiber over the circle.

The virtually embedded case of Theorem~\ref{thm:main thm} can be derived as a corollary of Theorem~\ref{thm:FiberQuadratic} as follows.
If a horizontal surface $g \colon S \looparrowright N$ is virtually embedded then there exist finite covers $\hat{S}\to S$ and $\hat{N}\to N$ such that $\hat{N}$ is an $\hat{S}$--bundle over $S^1$ (see for instance \cite{WangYu}).  By Theorem~\ref{thm:FiberQuadratic}, the distortion of $\pi_1(\hat{S})$ in $\pi_1(\hat{N})$ is quadratic.  Distortion is unchanged when passing to subgroups of finite index, so the distortion of $\pi_1(S)$ in $\pi_1(N)$ is also quadratic.

\subsection{Overview}
In Section~\ref{prelininaries} we review some concepts in geometric group theory.
In Section~\ref{sec:surfaces} we give several lemmas about curves on hyperbolic surfaces that will be used in Section~\ref{sec:LowerBound}.
Section~\ref{sec:GraphManifolds} is a review background about graph manifolds and horizontal surfaces. A convenient metric on a graph manifold that will use in this paper will be discussed. 

In Section~\ref{sec:LowerBound} we prove the distortion of a horizontal surface in a graph manifold is at least quadratic. We also show that if the horizontal surface is not virtually embedded then the distortion is at least exponential. The strategy in this proof was inspired by the work of Woodhouse (see Section~6 in \cite{Woodhouse16}).
In Section~\ref{sec:UpperBound} we prove that distortion of a horizontal surface in a graph manifold is at most exponential. Furthermore, if the horizontal surface is virtually embedded then the distortion is at most quadratic.

Section~\ref{sec:FiberQuadratic} contains a detailed exposition of the proof of Theorem~\ref{thm:FiberQuadratic}.

\subsection{Acknowledgments}
The authors are grateful to Hung Cong Tran for suggesting the possibility of a connection between the virtually embedded case of Theorem~\ref{thm:main thm} and the earlier work of Gersten on quadratic divergence.
The first author wishes to thank Noel Brady for long conversations about distortion of subgroups and divergence of $3$--manifold groups many years ago.  These conversations when the first author was a graduate student had a long delayed---but substantial---influence over the content of this article.

The authors also benefited from many helpful conversations about this material with Boris Okun, Joseph Tidmore, Becca Winarski, Dani Wise, and Daniel Woodhouse.  The authors are grateful for the insightful and detailed critiques of multiple referees that have helped improve the exposition of this paper.

This work was partially supported by a grant from the Simons Foundation (\#318815 to G. Christopher Hruska).

\section{Preliminaries}
\label{prelininaries}

In this section, we review some concepts in geometric group theory: quasi--isometry, distortion of a subgroup and the notions of domination and equivalence.

\begin{defn}
Let $(X,d)$ be a metric space. A path $\gamma \colon [a,b] \to X$ is a \emph{geodesic} if $d(\gamma(s),\gamma(t))=\abs{s-t}$ for all $s,t \in [a,b]$.
A simple loop $f\colon S^1 \to X$ is a \emph{geodesic loop} if $f$ is an isometric embedding with respect to some length metric on $S^1$.
\end{defn}

\begin{defn}[quasi-isometry] 
Let $(X_{1},d_{1})$ and $(X_{2},d_{2})$ be metric spaces. A (not necessarily continuous) map $f \colon X_{1} \to X_{2}$ is an \emph{$(L,C)$--quasi-isometric embedding} if there exist constants $L\geq 1$ and $C\geq 0$ such that for all $x,y \in X_{1}$ we have
\[
   \frac{1}{L} \, d_{1}(x,y) - C
   \leq d_{2} \bigl( f(x),f(y) \bigr)
   \leq L\,d_1(x,y) + C.
\]
If, in addition, there exits a constant $D\geq{0}$ such that every point of $X_{2}$ lies in the $D$--neighborhood of the image of $f$, then $f$ is an \emph{$(L,C)$--quasi-isometry}. When such a map exists, $X_{1}$ and $X_{2}$ are \emph{quasi-isometric}.
\end{defn}

Let $(X,d)$ be a metric space, and $\gamma$ a path in $X$. We denote the length of $\gamma$ by $\abs{\gamma}$.

\begin{defn}[quasigeodesic]
A path $\gamma$ in a metric space $(X,d)$ is an \emph{$(L,C)$--quasigeodesic} with respect to constants $L\geq 1$ and $C\geq 0$ if $\abs{\gamma_{[x,y]} } \leq {L\,d(x,y)+C}$ for all $x,y\in{\gamma}$. A \emph{quasigeodesic} is a path that is $(L,C)$--quasigeodesic for some $L$ and $C$.
\end{defn}

\begin{defn}
A geodesic space $(X,d)$ is \emph{$\delta$--hyperbolic} if every geodesic triangle with vertices in $X$ is \emph{$\delta$--thin} in the sense that each side lies in the $\delta$--neighborhood of the union of the other two sides.
\end{defn}

\begin{defn}[deviation]
Let $(X,d)$ be a geodesic space and $c\geq{0}$ be a fixed number.  Consider a pair of geodesic segments $[x,y]$ and $[z,t]$ such that $d(y,z)\leq{c}$. The \emph{$c$--deviation} of $[x,y]$ and $[z,t]$, denoted $\dev_{c} \bigl([x,y],[z,t] \bigr)$, is the quantity
\[
   \sup \bigset{\max\{d(u,y),d(v,z)\}}{\text{$u\in[x,y]$,$v\in[z,t]$,$d(u,v)\leq{c}$}}.
\]
\end{defn}

The following theorem gives a criterion for determining that a piecewise geodesic in a $\delta$--hyperbolic space is a quasigeodesic.  The proof, which is very similar to the proof of Lemma~19 of \cite[Chapter 5]{GH90}, is left as an exercise to the reader.

\begin{thm}
\label{thm:Quasigeodesic}
Let $(X,d)$ be a $\delta$--hyperbolic space. For any $\kappa\geq{0}$ and $D \ge 0$, there exist constants $L=L(\delta,\kappa,D)$, and $C=C(\delta,\kappa,D)$ such that the following holds: Suppose a piecewise geodesic
\[
   c=[x_1,x_2]\cup[x_2,x_3]\cup\cdots\cup[x_{2m-1},x_{2m}]
\]
satisfies
\begin{enumerate}
   \item
   \label{item:quasigeodesic:Short}
   $d(x_{2i},x_{2i+1})\leq{\kappa}$
   \item
   \label{item:quasigeodesic:Long}
   $d(x_{2i-1},x_{2i})\geq{11\kappa+25\delta+2D}$, and
   \item
   \label{item:quasigeodesic:deviation}
   $\dev_{Q} \bigl( [x_{2i-1},x_{2i}], [x_{2i+1},x_{2i+2}] \bigr) \leq{D}$, where $Q = 4\delta + \kappa$.
\end{enumerate}
Then $c$ is an $(L,C)$--quasigeodesic. \qed
\end{thm}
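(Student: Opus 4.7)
The plan is to prove this by the standard local-to-global technique for quasigeodesics in $\delta$--hyperbolic spaces, following Lemma~19 of Chapter~5 in \cite{GH90}, using hypotheses \ref{item:quasigeodesic:Short}--\ref{item:quasigeodesic:deviation} to rule out backtracking of $c$. The first step is a local estimate across three consecutive segments. Fix an index $i$ and consider the geodesic quadrilateral on the vertices $x_{2i-1}, x_{2i}, x_{2i+1}, x_{2i+2}$, whose sides consist of the two long segments, the short connecting segment, and the geodesic $[x_{2i-1}, x_{2i+2}]$. Such a quadrilateral is $2\delta$--thin.

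The key observation is that if a point $u \in [x_{2i-1}, x_{2i}]$ with $d(u, x_{2i}) > D$ were within $Q = 4\delta + \kappa$ of some point $v \in [x_{2i+1}, x_{2i+2}]$, then hypothesis \ref{item:quasigeodesic:deviation} would be violated. Since $d(x_{2i}, x_{2i+1}) \leq \kappa$, this forces every point $u$ in the ``deep middle'' of $[x_{2i-1}, x_{2i}]$---those at distance greater than $\max(D, 2\delta + \kappa)$ from $x_{2i}$---to lie within $2\delta$ of the diagonal $[x_{2i-1}, x_{2i+2}]$. The symmetric statement holds for $[x_{2i+1}, x_{2i+2}]$, and the deviation hypothesis again prevents the projections of the two deep middles onto the diagonal from overlapping. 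Together with hypothesis \ref{item:quasigeodesic:Long}, which ensures the deep middle constitutes most of each long segment, this yields
\[
   d(x_{2i-1}, x_{2i+2}) \geq \abs{c_{[x_{2i-1}, x_{2i+2}]}} - E
\]
for a bounded error $E = E(\delta, \kappa, D)$.

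The second step is to pass from this local estimate to a global quasigeodesic bound. This is the standard local-to-global principle for quasigeodesics in hyperbolic spaces: a path that is a $(1, E)$--quasigeodesic at sufficiently large scales is globally a quasigeodesic with constants depending only on $\delta$, $E$, and the scale. Chaining the local estimate along $c$ (using $\delta$--hyperbolicity and hypothesis \ref{item:quasigeodesic:deviation} to prevent non-consecutive long segments from approaching each other) gives that any two vertices $x_i, x_j$ of $c$ satisfy $d(x_i, x_j) \geq L^{-1} \abs{c_{[x_i, x_j]}} - C$ for constants $L, C$ depending only on $\delta, \kappa, D$. Extending from vertex pairs to arbitrary points on $c$ costs only a universal additive constant, since each long segment is a geodesic and each short segment has length $\leq \kappa$.

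The main obstacle is the bookkeeping of constants. The threshold $11\kappa + 25\delta + 2D$ in hypothesis \ref{item:quasigeodesic:Long} is calibrated so that after accounting for the $2\delta$ slack in thin quadrilaterals, the $\kappa$ slack from short segments, and the $D$ slack from the deviation hypothesis, each long segment retains a substantial middle portion whose nearest-point projection onto the diagonal contributes linearly to the distance between endpoints. In the iteration step, one must verify that this bounded error per triple of consecutive segments does not accumulate multiplicatively along the entire path; this is precisely where the full strength of $\delta$--hyperbolicity, via thin-polygon estimates for long concatenations, is essential.
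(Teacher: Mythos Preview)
Your proposal is correct and follows exactly the approach the paper indicates: the paper does not give its own proof, but instead leaves it as an exercise, pointing to Lemma~19 of Chapter~5 in \cite{GH90}, which is precisely the local-to-global argument via thin quadrilaterals and the deviation bound that you outline.
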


\begin{defn}
\label{defn:quivalence relation}
Let $f,g$ be functions from positive reals to positive reals. The function $f$ is \emph{dominated} by $g$, denoted $f\preceq g$, if there are positive constants $A$, $B$, $C$, $D$ and $E$ such that
\[
  f(x) \leq A\,g(Bx+C)+Dx+E \quad \text{for all $x$.}
\]
The functions $f$ and $g$ are \emph{equivalent},
denoted $f\sim g$, if $f\preceq g$ and $g\preceq f$.
\end{defn}

The relation $\preceq$ is an equivalence relation. Polynomial functions with degree at least one are equivalent if and only if they have the same degree.  Furthermore, all exponential functions are equivalent.

\begin{defn}[Subgroup distortion]
Let $H\leq{G}$ be a pair of groups with finite generating sets $\mathcal{T}$ and $\mathcal{S}$ respectively. The \emph{distortion} of $(H,\mathcal{T})$ in $(G,\mathcal{S})$ is the function
\[
   \Delta_{H}^G(n)
     = \max \bigset{\abs{h}_{\mathcal{T}}}{\text{$h\in{H}$ and $\abs{h}_{\mathcal{S}}\leq{n}$} }
\]
Up to equivalence, the function $\Delta_{H}^{G}$ does not depend on the choice of finite generating sets $\mathcal{S}$ and $\mathcal{A}$.
\end{defn}

Let $H\leq{G}$ be a pair of finitely generated groups. We say that the distortion $\Delta_{H}^{G}$ is \emph{at least quadratic} [\emph{exponential}] if it dominates a quadratic polynomial [exponential function]. The distortion $\Delta_{H}^{G}$ is \emph{at most quadratic} [\emph{exponential}] if it is dominated by a quadratic polynomial [exponential function].

The following proposition is routine, and we leave the proof as an exercise for the reader.

\begin{prop}
\label{prop:distortion}
Let $G$, $H$, $K$ be finitely generated groups with $K \le H \le G$.
\begin{enumerate}
   \item If $H$ is a finite index subgroup of $G$ then $\Delta_{K}^{H} \sim \Delta_{K}^{G}$.
   \item If $K$ is a finite index subgroup of $H$ then $\Delta_{K}^{G} \sim \Delta_{H}^{G}$.   \qed
\end{enumerate}
\end{prop}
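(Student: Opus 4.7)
My plan is to reduce both statements to the standard fact that if $A \le B$ is a finite-index inclusion of finitely generated groups with finite generating sets $\mathcal{U}$ and $\mathcal{V}$ respectively, then there exist constants $L \ge 1$ and $C \ge 0$ such that
\[
   \tfrac{1}{L}\abs{a}_{\mathcal{U}} - C \;\le\; \abs{a}_{\mathcal{V}} \;\le\; L\abs{a}_{\mathcal{U}} + C
\]
for every $a \in A$. The upper bound is immediate after writing each element of $\mathcal{U}$ as a bounded-length $\mathcal{V}$-word. The lower bound uses the familiar coset-subdivision argument: given a $\mathcal{V}$-word of length $n$ representing $a \in A$, its partial products cycle through finitely many cosets of $A$ in $B$, so it can be rewritten as a product of $n$ elements of $A$, each of bounded $\mathcal{U}$-length.

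For part (1), I apply this lemma to the pair $H \le G$. It follows that for every $k \in K \subseteq H$ the lengths $\abs{k}_H$ and $\abs{k}_G$ are linearly comparable. Hence if $\abs{k}_G \le n$ then $\abs{k}_H \le Ln + C$, so $\abs{k}_K \le \Delta_K^H(Ln + C)$, giving $\Delta_K^G \preceq \Delta_K^H$. The reverse inequality is symmetric, using the other side of the comparison.

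For part (2), I apply the lemma to the pair $K \le H$ to get linear comparability of $\abs{k}_K$ and $\abs{k}_H$ for every $k \in K$. The inequality $\Delta_K^G \preceq \Delta_H^G$ is then immediate, since any $k \in K$ with $\abs{k}_G \le n$ lies in $H$ with $\abs{k}_H \le \Delta_H^G(n)$, whence $\abs{k}_K \le L\,\Delta_H^G(n) + C$. The reverse inequality $\Delta_H^G \preceq \Delta_K^G$ requires one extra observation, since an element $h \in H$ that realizes the maximum in $\Delta_H^G(n)$ need not lie in $K$: I pick finitely many coset representatives $h_1,\ldots,h_m$ for $K\backslash H$, write $h = k h_i$ with $k \in K$, and note $\abs{k}_G \le n + \max_i \abs{h_i}_G$. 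Then $\abs{k}_K \le \Delta_K^G(n+D)$ for a suitable constant $D$, and so $\abs{h}_H \le \abs{k}_H + \abs{h_i}_H$ is controlled by $\Delta_K^G(n+D)$ up to linear and additive constants.

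I expect no conceptual obstacle; the only point requiring a moment's thought is the reverse direction of (2), where one must handle elements of $H$ lying outside of $K$ via the coset decomposition described above. The rest is bookkeeping within the equivalence relation $\sim$ of Definition~\ref{defn:quivalence relation}, which absorbs the linear and additive constants that arise.
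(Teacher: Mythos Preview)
Your proposal is correct. The paper does not actually give a proof of this proposition: it states that the result is routine and leaves it as an exercise for the reader, so there is nothing to compare your argument against. Your approach---reducing both parts to the bilipschitz equivalence of word metrics for a finite-index pair, together with the coset-representative trick for the reverse direction of (2)---is exactly the standard argument one would expect, and all the constants are absorbed by the equivalence relation $\sim$.
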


It is well known that a group acting properly, cocompactly, and isometrically on a geodesic space is quasi-isometric to the space.
The following corollary of this fact allows us to compute distortion using the geometries of spaces in place of word metrics.

\begin{cor}
\label{cor:GeometricDistortion}
Let $X$ and $Y$ be compact geodesic spaces, and let $g\colon{(Y,y_0)} \to (X,x_0)$ be $\pi_1$--injective. We lift the metrics on $X$ and $Y$ to geodesic metrics on the universal covers $\tilde{X}$ and $\tilde{Y}$ respectively. Let $G=\pi_1(X,x_0)$ and $H=g_{*} \bigl( \pi_1(Y,y_{0}) \bigr)$.
Then the distortion $\Delta^G_H$ is equivalent to the function
\[
   f(n) = \max \bigset{d_{\tilde Y}(\tilde{y}_0,h (\tilde{y}_0))}{\text{$h \in H$ and $d_{\tilde X}(\tilde{x}_0,h (\tilde{x}_0)) \le n$}}.
\]
\end{cor}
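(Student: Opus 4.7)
The plan is to derive the corollary directly from the Švarc--Milnor statement quoted in the sentence immediately preceding it. Since $X$ and $Y$ are compact geodesic spaces, their universal covers $\tilde X$ and $\tilde Y$ are proper geodesic spaces on which the deck transformation groups $G = \pi_1(X,x_0)$ and $\pi_1(Y,y_0)$ act properly, cocompactly, and isometrically. The $\pi_1$--injectivity of $g$ identifies $\pi_1(Y,y_0)$ with $H$, so $H$ acts in the same way on $\tilde Y$. Applying the Švarc--Milnor principle to each action, I obtain constants $L \ge 1$ and $C \ge 0$ so that the orbit maps
\[
   \gamma \mapsto \gamma \cdot \tilde x_0 \colon (G,d_{\mathcal S}) \to (\tilde X, d_{\tilde X}),
   \qquad
   h \mapsto h \cdot \tilde y_0 \colon (H, d_{\mathcal T}) \to (\tilde Y, d_{\tilde Y})
\]
are $(L,C)$--quasi-isometries.

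The second step is a direct substitution. Writing the quasi-isometry inequalities explicitly, for every $h \in H \le G$ one has
\[
   \tfrac{1}{L} \abs{h}_{\mathcal S} - C
   \;\le\; d_{\tilde X} \bigl( \tilde x_0, h \tilde x_0 \bigr)
   \;\le\; L \abs{h}_{\mathcal S} + C,
\]
and the same bound with $\mathcal T$ in place of $\mathcal S$ and $\tilde Y$ in place of $\tilde X$. I would substitute these into the definition
\[
   \Delta_H^G(n) = \max \bigset{\abs{h}_{\mathcal T}}{h \in H,\ \abs{h}_{\mathcal S} \le n}
\]
in both directions: the constraint $\abs{h}_{\mathcal S} \le n$ gets replaced (after absorbing constants) by an affinely related constraint of the form $d_{\tilde X}(\tilde x_0, h \tilde x_0) \le L n + C$, and the maximised quantity $\abs{h}_{\mathcal T}$ is sandwiched between affine functions of $d_{\tilde Y}(\tilde y_0, h \tilde y_0)$. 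Performing the substitution once in each direction yields $\Delta_H^G \preceq f$ and $f \preceq \Delta_H^G$, which is exactly the equivalence $\sim$ of Definition~\ref{defn:quivalence relation}.

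There is no real obstacle here beyond careful bookkeeping; the content is entirely packaged into Švarc--Milnor and the flexibility built into the relation $\sim$. The only point requiring attention is to check that perturbing both the indexing set and the maximised function by affine changes keeps one inside the equivalence class under $\sim$, but this is precisely the reason the constants $A,B,C,D,E$ appear in Definition~\ref{defn:quivalence relation}. Presumably this is why the paper records the corollary without a proof.
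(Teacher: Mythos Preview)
Your proposal is correct and follows exactly the route the paper indicates: the corollary is stated immediately after the \v{S}varc--Milnor lemma and given no proof, precisely because the argument is the direct substitution you describe. Your bookkeeping of the constants and the observation that the equivalence relation $\sim$ absorbs the affine changes in both the constraint and the maximised quantity is the whole content.
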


\section{Curves in hyperbolic surfaces}
\label{sec:surfaces}

In this section, we will give some results about curves on surfaces that will play an essential role in the proof of Theorem~\ref{thm:lower}.

\begin{defn}[Multicurves]
A closed curve in a surface $S$ is \emph{essential} if it is not freely homotopic to a point or a boundary component.
A \emph{multicurve} in $S$ is a finite collection of disjoint, essential simple closed curves such that no two are freely homotopic.  If $S$ has a metric, a multicurve is \emph{geodesic} if each member of the family is a geodesic loop.
\end{defn}

\begin{lem}
\label{lem:DeviationTree}
Let $S$ be a compact surface with negative Euler characteristic. Let $\mathcal{C}$ be a multicurve in $S$. Let $\mathcal{L}$ be the family of lines that are lifts of loops of $\mathcal{C}$ or boundary loops of $S$.
Equip $S$ with any length metric $d_{S}$. Let $d$ be the induced metric on the universal cover $\tilde{S}$. For any $r>0$ there exists $D=D(r) < \infty$ such that for any two disjoint lines $\ell_1$ and $\ell_2$ of $\mathcal{L}$ we have
\[
  \diam \bigl( \mathcal{N}_{r}(\ell_1) \cap \mathcal{N}_{r}(\ell_2) \bigr) \leq D
\]
\end{lem}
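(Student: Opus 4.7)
The plan is to exploit the Gromov hyperbolicity of $\tilde{S}$. Since $S$ is a compact surface with negative Euler characteristic, $\pi_1(S)$ is a torsion-free word-hyperbolic group (either a closed surface group of genus at least two, or a finitely generated free group). Because $\pi_1(S)$ acts properly cocompactly by isometries on $(\tilde{S},d)$, the universal cover is $\delta$-hyperbolic for some $\delta=\delta(d_S)$, and admits a well-defined Gromov boundary $\partial_\infty\tilde{S}$.

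Each line $\ell\in\mathcal{L}$ is invariant under a loxodromic element $\gamma_\ell\in\pi_1(S)$ that acts on $\ell$ cocompactly, and the two ends of $\ell$ limit on the two fixed points of $\gamma_\ell$ in $\partial_\infty\tilde{S}$. Since $\mathcal{L}$ consists of only finitely many $\pi_1(S)$-orbits (one per loop of $\mathcal{C}$ and one per component of $\boundary S$), uniform $(L_0,C_0)$-quasi-geodesic constants exist for every line in $\mathcal{L}$. The next key step is to show that any two disjoint $\ell_1,\ell_2\in\mathcal{L}$ have \emph{four} distinct endpoints at infinity. Suppose instead they share a common endpoint $\xi\in\partial_\infty\tilde{S}$; then $\gamma_{\ell_1},\gamma_{\ell_2}\in\Stab(\xi)$, and in a torsion-free word-hyperbolic group this stabilizer is a maximal infinite cyclic subgroup. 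Hence $\gamma_{\ell_1}$ and $\gamma_{\ell_2}$ commute and share both fixed points at infinity, so $\ell_1$ and $\ell_2$ determine the same pair of points in $\partial_\infty\tilde{S}$. Using that every essential simple closed curve represents a primitive element of $\pi_1(S)$, together with the assumption that components of $\mathcal{C}\cup\boundary S$ are pairwise non-freely-homotopic, one concludes that $\ell_1$ and $\ell_2$ must be the same lift of the same loop, contradicting disjointness.

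With the four-distinct-endpoints property in hand, I would invoke the standard Morse-type fact in $\delta$-hyperbolic geometry: bi-infinite uniform quasi-geodesics with no endpoint in common fellow-travel only over a bounded region, so $\mathcal{N}_r(\ell_1)\cap\mathcal{N}_r(\ell_2)$ has finite diameter for any such disjoint pair. To promote ``finite'' to a single uniform constant $D(r)$, I would argue by cocompactness: disjoint pairs $(\ell_1,\ell_2)$ with $d(\ell_1,\ell_2)\le 2r$ form only finitely many $\pi_1(S)$-orbits (by local finiteness of $\mathcal{L}$ together with cocompactness of the action), while pairs at distance greater than $2r$ have empty $\mathcal{N}_r\cap\mathcal{N}_r$. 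Since $\pi_1(S)$ acts by isometries the diameter is constant on each orbit, and $D(r)$ may be taken as the maximum over these finitely many values.

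The main obstacle is the uniformity step. In a general $\delta$-hyperbolic space, two quasi-geodesic lines with disjoint ideal endpoints can still fellow-travel over arbitrarily long intervals when their endpoints are very close in $\partial_\infty$, so uniformity does not follow from hyperbolicity alone. The extra input that saves us is the discreteness coming from the cocompact isometric $\pi_1(S)$-action on $\tilde{S}$, and verifying the finiteness of orbits of nearby disjoint pairs carefully is the most delicate part of the argument.
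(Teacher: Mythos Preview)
Your argument is correct and its skeleton matches the paper's: both reduce to finitely many $\pi_1(S)$--orbits of pairs $(\ell_1,\ell_2)$ with $d(\ell_1,\ell_2)\le 2r$ via cocompactness and local finiteness of $\mathcal{L}$, then take the maximum over representatives. The difference is in how the per-pair finiteness is justified. You invoke Gromov hyperbolicity of $\tilde S$ and a boundary argument to show that two distinct lines of $\mathcal{L}$ have four distinct endpoints in $\partial_\infty\tilde S$, hence cannot fellow-travel indefinitely. The paper instead simply asserts that distinct lines of $\mathcal{L}$ are ``not parallel'' and treats the resulting bounded-intersection statement as evident; implicitly one may pass to a hyperbolic metric on $S$ (available since $\chi(S)<0$), where lifts become genuine geodesics in $\HH^2$ and the claim is immediate, then transfer back by quasi-isometry. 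Your route is more self-contained and makes the role of negative curvature explicit; the paper's is shorter but leaves more to the reader.
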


\begin{proof}
Since $H = \pi_1(S)$ acts cocompactly on the universal cover $\tilde{S}$, there exists a closed ball $\overline{B}(x_0,R)$ whose $H$--translates cover $\tilde{S}$.
Note that $\mathcal{L}$ is locally finite in the sense that only finitely many lines of $\mathcal{L}$ intersect the closed ball $B = \overline{B}(x_0,R+2r)$.
Since distinct lines of $\mathcal{L}$ are not parallel, there exists a finite upper bound $D = D(r)$ on the diameter of the intersection
$\mathcal{N}_{r}(\ell)\cap{\mathcal{N}_{r}(\ell')}$ for all lines $\ell\ne \ell' \in \mathcal{L}$ that intersect $B$.

Consider $\ell_1\ne \ell_2 \in \mathcal{L}$.
If $d(\ell_1,\ell_2)\geq{2r}$ then their $r$--neighborhoods have empty intersection, and the result is vacuously true. Thus it suffices to assume that $d(\ell_1,\ell_2)<2r$.
By cocompactness, there exists $h \in H$ so that $h(\ell_1)$ intersects $\overline{B}(x_0,R)$.  But then $h(\ell_1)$ and $h(\ell_2)$ both intersect $B$, so that
\[
   \diam \bigl( \mathcal{N}_r (\ell_1)
    \cap \mathcal{N}_r (\ell_2) \bigr) \leq D,
\]
as desired.
\end{proof}

\begin{lem}
\label{lem:ExistenceOfLoop}
Let $S$ be a compact hyperbolic surface with totally geodesic \textup{(}possibly empty\textup{)} boundary, and let $\mathcal{C}$ be a nonempty geodesic multicurve.
Then there exists a geodesic loop $\gamma$ in $S$ such that $\gamma$ and $\mathcal{C}$ have nonempty intersection.
\end{lem}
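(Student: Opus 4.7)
My plan is to choose a single component $c$ of the multicurve $\mathcal{C}$ and construct an essential, non-peripheral simple closed curve $\beta$ in $S$ whose geometric intersection number with $c$ is positive. Its geodesic representative $\gamma$ is then a simple closed curve in the interior of $S$ realising the minimal geometric intersection number with the geodesic $c$ inside the free homotopy class, so $\gamma$ is a geodesic loop meeting $c\subset\mathcal{C}$. The construction of $\beta$ splits naturally into two cases according to whether $c$ separates $S$.

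If $c$ is non-separating, then standard surface topology provides a simple closed curve $\beta$ meeting $c$ transversally in exactly one point. Since every component of $\partial S$ is disjoint from $c$ (essential multicurve components lie in the interior of $S$), the relation $\beta\cdot c=\pm 1$ forces $\beta$ to be both essential and non-peripheral, and the geodesic representative then satisfies $i(\gamma,c)\ge 1$.

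Suppose instead that $c$ separates $S$ into subsurfaces $A$ and $B$. Since $c$ is essential and not parallel to any boundary component, neither $A$ nor $B$ is a disk or annulus; both therefore have negative Euler characteristic and each contains an essential simple arc with endpoints on $c$ (where \emph{essential} means not homotopic rel endpoints into $c$). After isotoping endpoints along $c$, I would choose such arcs $\alpha_A\subset A$ and $\alpha_B\subset B$ with a common pair of distinct endpoints $p_1,p_2\in c$, and set $\beta=\alpha_A\cup\alpha_B$, a simple closed curve meeting $c$ exactly at $\{p_1,p_2\}$.

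The main obstacle lies in this separating case, because the algebraic intersection $\beta\cdot c$ vanishes and one must argue directly that the two intersection points cannot be cancelled by a free homotopy. I would handle this via the amalgamated product $\pi_1(S)=\pi_1(A)*_{\pi_1(c)}\pi_1(B)$: choosing a connecting arc $u\subset c$ from $p_1$ to $p_2$, the loops $\alpha_A\cdot u^{-1}$ and $u\cdot\alpha_B$ represent elements $a\in\pi_1(A)$ and $b\in\pi_1(B)$ with $[\beta]=ab$, and the essentiality of the two arcs translates exactly into the statement that neither $a$ nor $b$ lies in the edge subgroup $\pi_1(c)$. Hence $ab$ is a cyclically reduced word of syllable length two in the amalgamated product, so it is nontrivial, non-peripheral, and not conjugate into either vertex group. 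Therefore $\beta$ is essential and non-peripheral and cannot be freely homotoped into $A$ or $B$; combined with the fact that $c$ separates, this forces every transverse representative of $[\beta]$ to cross $c$ at least twice, so $i(\gamma,c)\ge 2$.
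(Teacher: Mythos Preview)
Your proof is correct and follows essentially the same approach as the paper: pick a component $c\in\mathcal{C}$, produce a closed curve with positive geometric intersection number with $c$, and pass to its geodesic representative. The paper merely cites \cite{FarbMargalit12} for the existence of such a curve and leaves it as an exercise, whereas you carry out the non-separating/separating case analysis and the amalgamated-product argument in full; this is a faithful expansion of the step the paper omits, not a different method.
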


\begin{proof}
Choose any loop $c \in \mathcal{C}$.  Cutting $S$ along $c$ produces a surface each of whose components has negative Euler characteristic since each of its components is a union of blocks of $S$ joined along circles.
For any such simple closed curve $c$ in a compact surface $S$, we claim that there exists another closed curve $\gamma$ whose geometric intersection number with $c$ is nonzero.  We leave the proof of this claim as an easy exercise for the reader (using, for example, the techniques in Sections 1.2.4 and~1.3 of \cite{FarbMargalit12}).
After applying a homotopy, we can assume that $\gamma$ is a geodesic loop.
\end{proof}

\begin{lem}
\label{lem:LoopsAreQuasigeodesic}
Let $S$ be a compact hyperbolic surface with totally geodesic \textup{(}possibly empty\textup{)} boundary, and let $\mathcal{C}$ be a nonempty geodesic multicurve.
Let $\mathcal{L}$ be the family of lines that are lifts of loops of $\mathcal{C}$ or boundary loops of $S$.
For any $\kappa > 0$, there exist numbers $\mu$, $L$, and $C$ such that the following holds.

Consider a piecewise geodesic $c = \alpha_1 \beta_1 \cdots \alpha_n \beta_n$ in the universal cover $\tilde S$.
Suppose each segment $\alpha_j$ is contained in a line of $\mathcal{L}$, and each segment $\beta_j$ meets the lines of $\mathcal{L}$ only at its endpoints.
If $\abs{\beta_j} \le \kappa$ and $\abs{\alpha_j} \ge \mu$ for all $j$, then $c$ is an $(L,C)$--quasigeodesic.
\end{lem}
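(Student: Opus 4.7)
The plan is to apply Theorem~\ref{thm:Quasigeodesic}, treating each $\alpha_j$ as a ``long'' geodesic segment and each $\beta_j$ as a ``short'' connector. Since $S$ is a compact hyperbolic surface with (possibly empty) totally geodesic boundary, its universal cover $\tilde S$ embeds isometrically as a convex subset of $\Hyp^2$, so $\tilde S$ is $\delta$--hyperbolic for some universal constant $\delta$.

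First I would show that the lines of $\mathcal{L}$ containing consecutive long segments are always distinct. The members of $\mathcal{L}$ are pairwise disjoint geodesic lines, being lifts of pairwise disjoint simple closed curves or of boundary loops. Moreover, since $\tilde S$ is convex in $\Hyp^2$, the unique geodesic between two points of a single line $\ell \in \mathcal{L}$ is the subsegment of $\ell$ joining them. The interior of $\beta_j$ avoids $\mathcal{L}$, so its two endpoints cannot both lie on the same line (otherwise $\beta_j$ would itself be a subsegment of that line, violating the hypothesis). Hence the lines $\ell_j$ and $\ell_{j+1}$ carrying $\alpha_j$ and $\alpha_{j+1}$ are distinct.

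Next I would fix the constants. Set $Q = 4\delta + \kappa$, let $D = D(Q)$ be the constant supplied by Lemma~\ref{lem:DeviationTree} with $r = Q$, and put $\mu = 11\kappa + 25\delta + 2D$. The heart of the argument is verifying the deviation hypothesis (\ref{item:quasigeodesic:deviation}) of Theorem~\ref{thm:Quasigeodesic}. Let $y_j$ be the endpoint of $\alpha_j$ where $\beta_j$ begins and $z_{j+1}$ the starting point of $\alpha_{j+1}$, so $y_j \in \ell_j$, $z_{j+1} \in \ell_{j+1}$, and $d(y_j, z_{j+1}) = \abs{\beta_j} \le \kappa \le Q$. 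If $u \in \alpha_j$ and $v \in \alpha_{j+1}$ satisfy $d(u,v) \le Q$, then $u, y_j \in \ell_j$ both lie within distance $Q$ of $\ell_{j+1}$ (via $v$ and $z_{j+1}$, respectively), so that $\{u, y_j\} \subset \mathcal{N}_Q(\ell_j) \cap \mathcal{N}_Q(\ell_{j+1})$. By Lemma~\ref{lem:DeviationTree} this intersection has diameter at most $D$, giving $d(u, y_j) \le D$; symmetrically $d(v, z_{j+1}) \le D$. Hence $\dev_Q(\alpha_j, \alpha_{j+1}) \le D$.

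With hypotheses (\ref{item:quasigeodesic:Short})--(\ref{item:quasigeodesic:deviation}) verified, Theorem~\ref{thm:Quasigeodesic} yields that the subpath $\alpha_1 \beta_1 \cdots \alpha_n$ is an $(L',C')$--quasigeodesic, with constants depending only on $\delta$, $\kappa$, and $D$. Appending the final connector $\beta_n$, which has length at most $\kappa$, adjusts the constants by a bounded additive amount, producing the desired $(L,C)$. The main (though modest) obstacle is the deviation bound; the essential point that makes it work is that consecutive $\alpha$--segments lie on \emph{different} lines of $\mathcal{L}$, which rules out the U--turn scenario at a $\beta_j$ that would otherwise destroy quasigeodesicity.
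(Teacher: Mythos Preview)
Your proof is correct and follows essentially the same route as the paper: apply Theorem~\ref{thm:Quasigeodesic} with $Q = 4\delta+\kappa$, invoke Lemma~\ref{lem:DeviationTree} with $r=Q$ to get the constant $D$, set $\mu = 11\kappa+25\delta+2D$, and verify the deviation bound by observing that consecutive long segments lie on distinct lines of~$\mathcal{L}$. Your justification for this last point (that otherwise $\beta_j$ would be a subsegment of a single line) is a bit more explicit than the paper's, and your remark about appending the terminal $\beta_n$ addresses a detail the paper glosses over, but the overall argument is the same.
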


\begin{proof}
In order to show that the piecewise geodesic $c$ in the $\delta$--hyperbolic space $\tilde{S}$ is uniformly quasigeodesic, we will 
show that there exists a constant $D$ such that whenever $\mu$ is sufficiently large, $c$ satisfies the hypotheses of Theorem~\ref{thm:Quasigeodesic}.  Thus $c$ is $(L,C)$--quasigeodesic for constants $L$ and $C$ not depending on $c$.

Let $Q=4\delta + \kappa$. 
Let $D = D(Q)$ be the constant given by applying Lemma~\ref{lem:DeviationTree} to the hyperbolic surface $S$ and the multicurve $\mathcal{C}$.
Let $L=L(\delta,\kappa,D)$ and $C=C(\delta,\kappa,D)$ be the constants given by Theorem~\ref{thm:Quasigeodesic}. We define $\mu = 11\kappa + 25\delta + 2D$.
It follows that $c$ satisfies Conditions (\ref{item:quasigeodesic:Short}) and~(\ref{item:quasigeodesic:Long}) of Theorem~\ref{thm:Quasigeodesic}.

In order to to verify Condition~(\ref{item:quasigeodesic:deviation}) of Theorem~\ref{thm:Quasigeodesic}, we need to show that $\dev_{Q}\bigl (\alpha_{j},\alpha_{j+1} \bigr ) \le D$.
Let $u\in \alpha_j$ and $v \in \alpha_{j+1}$ such that $d(u,v)\le Q$. Let $u'$ and $v'$ be the initial and terminal points of $\beta_j$. We need to show that $d \bigl( u,u' \bigr) \le D$ and $d \bigl( v,v' \bigr) \le D$.
It is easy to see the elements $u$, $v$, $u'$, and $v'$ belong to $\mathcal{N}_{Q} \bigl(\alpha_{j} \bigr) \cap \mathcal{N}_{Q} \bigl( \alpha_{j+1} \bigr)$ because $\abs{\beta_j} \le \kappa \le Q$.
We have $\alpha_{j+1}$ and $\alpha_j$ do not belong to the same line of $\mathcal{L}$ because any two distinct geodesics in $\tilde{S}$ intersect at most one point. Hence
\[
   \diam \bigl( \mathcal{N}_{Q} (\alpha_{j}) \cap \mathcal{N}_{Q}(\alpha_{j+1}) \bigr)
   \leq D
\]
by Lemma~\ref{lem:DeviationTree}.
It follows that $d \bigl( u,u' \bigr) \le D$ and $d \bigl( v,v' \bigr) \le D$. It follows immediately from the definition of the deviation that $\dev_{Q} \bigl( \alpha_{j}, \alpha_{j+1} \bigr) \le D$.
\end{proof}

\section{Graph manifolds and horizontal surfaces}
\label{sec:GraphManifolds}

In this section, we review background about graph manifolds and horizontal surfaces. In addition, we discuss a convenient metric for a graph manifold that will be used in next sections. We refer the reader to \cite{RW98}, \cite{BS04} and \cite{KapovichLeeb98} for more details.

\begin{defn}
A \emph{graph manifold} is a compact, irreducible, connected, orientable 3-manifold $N$ that can be decomposed along embedded incompressible tori $\mathcal{T}$ into finitely many Seifert manifolds. We specifically exclude Sol and Seifert manifolds from the class of graph manifolds. Up to isotopy, each graph manifold has a unique minimal collection of tori $\mathcal{T}$ as above \cite{JS79,Johannson79}. This minimal collection is the \emph{JSJ decomposition} of $N$, and each torus of $\mathcal{T}$ is a \emph{JSJ torus}.
\end{defn}

Throughout this paper, a graph consists of a set $\mathcal{V}$ of vertices and a set of $\mathcal{E}$ of edges, each edge being associated to an unordered pair of vertices by a function $ends \colon ends(e) = \{v,v'\}$ where $v,v' \in \mathcal{V}$. In this case we call $v$ and $v'$ the \emph{endpoints} of the edge $e$ and we also say $v$ and $v'$ are adjacent.

\begin{defn}
\label{defn:SimpleGraphManifold}
A \emph{simple graph manifold} $N$ is a graph manifold with the following properties:
\begin{enumerate}
\item Each Seifert component is a trivial circle bundle over an orientable surface of genus at least 2.
\item The intersection numbers of fibers of adjacent Seifert components have absolute value 1.
\end{enumerate}
\end{defn}

\begin{thm}[\cite{KapovichLeeb98}]
\label{thm:KLsimple}
Any graph manifold $N$ has a finite cover $\hat{N}$ that is a simple graph manifold.
\end{thm}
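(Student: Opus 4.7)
The plan is to build the cover in two stages, each addressing one of the two conditions in Definition~\ref{defn:SimpleGraphManifold}.

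First, I would exploit residual finiteness of Seifert fibered spaces to pass to a finite cover in which every Seifert component lifts to a product $\Sigma_v \times S^1$ with $\Sigma_v$ an orientable surface of negative Euler characteristic. This relies on the classical fact that any Seifert fibered space with nonempty boundary has a finite cover that is a trivial circle bundle over an orientable surface—singular fibers can be unwound and non-orientability of the base can be killed by passing to a cover. To promote these local covers to a single cover of $N$, I would work with the graph-of-groups structure: $\pi_1(N)$ splits with Seifert pieces as vertex groups and JSJ torus groups $\Z^2$ as edge groups. A simultaneous cover is obtained by intersecting a collection of finite-index subgroups of $\pi_1(N)$ pulled back from finite quotients of the vertex groups, chosen so that their restrictions to each edge group agree. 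A further characteristic cover of each base surface boosts the genus of every $\Sigma_v$ to at least~$2$.

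Second, once each Seifert piece has the form $\Sigma_v \times S^1$, on each JSJ torus $T$ shared by pieces $M_{v_1}$ and $M_{v_2}$ the fiber classes $f_{v_1},f_{v_2} \in \pi_1(T) \cong \Z^2$ are linearly independent—if they were parallel, $N$ would be Seifert, which is excluded. Hence their intersection number is a nonzero integer $n_T$, and we want $\abs{n_T}=1$. The strategy is to pass to a further finite cover that, on each $\Sigma_v\times S^1$, corresponds to a carefully chosen sublattice of $\Z^2 = \pi_1(T)$ on each boundary torus. By choosing these sublattices to ``unwind'' the gluing so that the fibers of adjacent pieces together generate $\pi_1(T)$ in the cover, one obtains intersection number $\pm 1$.

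The main obstacle lies in this second stage, because each Seifert piece contributes to several JSJ tori, and the cover chosen on one piece affects the gluing data on every boundary component. The compatibility constraint across all edges must be met simultaneously. I would handle this by choosing a finite target group that records the arithmetic of the gluings on every JSJ torus at once—for example, a product over edges of finite quotients of $\Z^2$ keyed to the integers $n_T$—and then passing to the kernel of a suitable homomorphism from $\pi_1(N)$ to that target. The finiteness of the JSJ graph ensures only finitely many compatibility conditions, and the freedom to take the common cover arbitrarily deep gives enough room to satisfy all of them at once.
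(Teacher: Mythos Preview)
The paper does not prove this theorem; it is simply quoted as a result of Kapovich--Leeb \cite{KapovichLeeb98}, so there is no proof in the paper to compare against.

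Your outline follows the standard two-stage strategy used in the literature and is essentially on the right track. One comment on Stage~2: you leave the choice of sublattice on each JSJ torus vague, and this is exactly where the argument lives. The concrete choice that works is to take, on each JSJ torus $T$ with adjacent fiber classes $f_{v_1},f_{v_2}\in\pi_1(T)\cong\Z^2$, the sublattice $\Lambda_T=\langle f_{v_1},f_{v_2}\rangle$; since $f_{v_1}$ and $f_{v_2}$ generate $\Lambda_T$, their intersection number in the cover is $\pm 1$ by construction. The genuine work, as you note, is extending these torus covers consistently over the Seifert pieces. Your proposal to map $\pi_1(N)$ to a finite product of quotients of edge groups and pass to the kernel is a reasonable way to phrase this, though in practice one usually argues directly: each product piece $\Sigma_v\times S^1$ admits any prescribed finite cover on its boundary tori that restricts to a fixed multiple of the fiber (by covering $\Sigma_v$ and $S^1$ separately), and one then glues. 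The only subtlety is ensuring the fiber degree is the same on all boundary components of a given piece, which can be arranged by passing to a common multiple. This is the argument in \cite{KapovichLeeb98}; see also Luecke--Wu and Neumann for closely related constructions.
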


\begin{defn}
Let $M$ be a Seifert manifold with boundary.
A \emph{horizontal surface} in $M$ is an immersion $g\colon B\looparrowright M$ where $B$ is a compact surface with boundary such that the image $g(B)$ is transverse to the Seifert fibration.
We also require that $g$ is \emph{properly immersed}, ie, $g(B) \cap \boundary M = g(\boundary B)$.

A \emph{horizontal surface} in a graph manifold $N$ is a properly immersed surface $g\colon S\looparrowright{N}$ such that for each Seifert component $M$, the intersection $g(S)\cap{M}$ is a horizontal surface in $M$.
A horizontal surface $g$ in a graph manifold $N$ is always $\pi_{1}$--injective and lifts to an embedding of $S$ in the cover of $N$ corresponding to the subgroup $g_*\bigl( \pi_1(S) \bigr)$ by \cite{RW98}.
Consequently, $g$ also lifts to an embedding $\tilde{S} \to \tilde{N}$ of universal covers.
\end{defn}

\begin{defn}
\label{def:Separable}
A horizontal surface $g\colon{S}\looparrowright{N}$ in a graph manifold $N$ is \emph{virtually embedded} if $g$ lifts to an embedding of $S$ in some finite cover of $N$.
By a theorem of Scott \cite{Scott78}, a horizontal surface is virtually embedded if $g_* \bigl( \pi_1(S) \bigr)$ is a \emph{separable} subgroup of $\pi_1(N)$, \emph{i.e.}, it is equal to an intersection of finite index subgroups.
Przytycki--Wise have shown that the converse holds as well \cite{PrzytyckiWise14GraphSpecial}.
\end{defn}

The following result about separability allows one to pass to finite covers in the study of horizontal surfaces, as explained in Corollary~\ref{cor:ScottFiniteCover}.

\begin{prop}[\cite{Scott78}, Lemma~1.1]
\label{prop:ScottFiniteIndex}
Let $G_0$ be a finite index subgroup of $G$.  A subgroup $H \le G$ is separable in $G$ if and only if $H \cap G_0$ is separable in $G_0$.
\end{prop}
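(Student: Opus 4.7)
The plan is to prove the two implications separately; the forward direction is essentially formal, while the reverse requires a small averaging construction where the main subtlety lies.

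For the forward direction, I would use the standard point-separation formulation: $H$ is separable in $G$ if and only if every $g \in G \setminus H$ can be excluded from $H$ by a finite index subgroup of $G$ containing $H$. Given $g \in G_0 \setminus (H \cap G_0)$, the element $g$ lies in $G \setminus H$, so separability of $H$ in $G$ produces some $K \le G$ of finite index with $H \le K$ and $g \notin K$. Then $K \cap G_0$ has finite index in $G_0$, contains $H \cap G_0$, and still excludes $g$, which is exactly what is needed.

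For the reverse direction, my first move is to reduce to the case where $G_0$ is normal in $G$ by replacing $G_0$ with its normal core $G_1$ (a finite intersection of $G$-conjugates, still of finite index). By the forward direction just proved, $H \cap G_1 = (H \cap G_0) \cap G_1$ is separable in $G_1$, so no generality is lost in assuming $G_0 \trianglelefteq G$ to begin with.

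With $G_0$ normal, let $g \in G \setminus H$ and set $H_0 = H \cap G_0$. If $g \notin HG_0$, then $HG_0$ itself is a finite index subgroup of $G$ containing $H$ but not $g$. Otherwise write $g = h g_0$ with $h \in H$ and $g_0 \in G_0$, and note that $g_0 \notin H_0$ since $g \notin H$. Separability of $H_0$ in $G_0$ then produces a finite index $K_0 \le G_0$ with $H_0 \le K_0$ and $g_0 \notin K_0$. The main obstacle is that $K_0$ need not be invariant under conjugation by $H$, so $HK_0$ is not \emph{a priori} a subgroup. The fix is to pass to $K_0^* := \bigcap_{h \in H} h K_0 h^{-1}$; because $G_0$ has only finitely many subgroups of each given finite index, this intersection is effectively finite and $K_0^*$ still has finite index in $G_0$. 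By construction $K_0^*$ is normalized by $H$, it contains $H_0$ (since $hH_0h^{-1} = H_0 \le K_0$ for every $h \in H$), and it is contained in $K_0$, so it still avoids $g_0$. Therefore $HK_0^*$ is a genuine finite index subgroup of $G$ containing $H$, and a short check—if $g = h'k$ with $h' \in H$ and $k \in K_0^*$, then $h^{-1}h' = g_0 k^{-1} \in G_0 \cap H = H_0 \le K_0^*$, forcing $g_0 \in K_0^*$, a contradiction—shows $g \notin HK_0^*$, completing the argument.
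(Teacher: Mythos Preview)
The paper does not actually prove this proposition; it is quoted from Scott's paper with a citation and no argument. So there is nothing in the paper to compare against, and your write-up stands on its own. The overall strategy---forward direction by intersecting with $G_0$, reverse direction by reducing to normal $G_0$, then building a finite-index subgroup $HK_0^*$ from an $H$-invariant refinement of $K_0$---is correct and is one of the standard proofs.

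One step of your justification is not quite right as written. You claim that $K_0^* = \bigcap_{h \in H} hK_0h^{-1}$ has finite index in $G_0$ ``because $G_0$ has only finitely many subgroups of each given finite index.'' That fact holds when $G_0$ is finitely generated, but the proposition is stated for arbitrary groups, and the claim fails in general (an infinite-rank free group has infinitely many subgroups of index $2$). The conclusion you need is nevertheless true: since $K_0$ has finite index in $G_0$ and $G_0$ has finite index in $G$, the subgroup $K_0$ has finite index in $G$; hence so does $N_G(K_0) \ge K_0$, which forces $[H : H \cap N_G(K_0)] < \infty$. Thus the $H$-conjugacy orbit $\{hK_0h^{-1} : h \in H\}$ is finite, and $K_0^*$ is a finite intersection of finite-index subgroups. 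With this replacement the rest of your argument goes through unchanged. In the paper's intended setting all groups are finitely generated $3$--manifold groups, so your original justification would suffice for the application, but for the proposition as stated you should use the normalizer argument instead.
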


\begin{cor}
\label{cor:ScottFiniteCover}
Let $q\colon (\hat{N},\hat{x}_0) \to (N,x_0)$ be a finite covering of graph manifolds.
Let $g\colon (S,s_0) \looparrowright (N,x_0)$ be a horizontal surface.
Let $p\colon (\hat{S},\hat{s}_0) \to (S,s_0)$ be the finite cover corresponding to the subgroup $g_*^{-1} q_* \pi_1(\hat{N},\hat{x}_0)$.
Then $g$ lifts to a horizontal surface $\hat{g} \colon (\hat{S},\hat{s}_0) \to (\hat{N},\hat{x}_0)$.
Furthermore $g$ is a virtual embedding if and only if $\hat{g}$ is a virtual embedding.
\end{cor}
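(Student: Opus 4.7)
The plan is to establish the two assertions of the corollary separately: first, the existence of the horizontal lift $\hat g$; and second, the equivalence of virtual embedding for $g$ and for $\hat g$.

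For existence of $\hat g$, I would apply the standard lifting criterion in covering space theory to the map $g\circ p\colon (\hat S,\hat s_0)\to (N,x_0)$. Using the definition of $p$ together with the set-theoretic identity $g_*\bigl(g_*^{-1}(A)\bigr)=A\cap g_*\pi_1(S,s_0)$, one computes
\[
   (g\circ p)_*\pi_1(\hat S,\hat s_0)
   = g_*\pi_1(S,s_0)\cap q_*\pi_1(\hat N,\hat x_0)
   \subseteq q_*\pi_1(\hat N,\hat x_0),
\]
so $g\circ p$ lifts uniquely to a basepoint-preserving map $\hat g\colon (\hat S,\hat s_0)\to (\hat N,\hat x_0)$. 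To confirm that $\hat g$ is itself a horizontal surface, I would verify three properties: $\hat S$ is compact because $p$ has finite degree, since $[\pi_1(S):p_*\pi_1(\hat S)]\le [\pi_1(N):q_*\pi_1(\hat N)]<\infty$; transversality to the Seifert fibration in each Seifert component of $\hat N$ is automatic because the Seifert structure on $N$ pulls back to a Seifert structure on $\hat N$ and transversality is a local condition; and $\hat g$ is properly immersed because $q^{-1}(\partial N)=\partial \hat N$ and $p^{-1}(\partial S)=\partial \hat S$.

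For the equivalence of virtual embedding, I would translate both sides into separability via Definition~\ref{def:Separable} and then invoke Proposition~\ref{prop:ScottFiniteIndex}. Set $G=\pi_1(N,x_0)$, $H=g_*\pi_1(S,s_0)$, and $G_0=q_*\pi_1(\hat N,\hat x_0)$. By definition, $g$ is virtually embedded iff $H$ is separable in $G$, and $\hat g$ is virtually embedded iff $\hat g_*\pi_1(\hat S,\hat s_0)$ is separable in $\pi_1(\hat N,\hat x_0)$. Since $\hat g$ is $\pi_1$--injective and $q_*$ is injective, the relation $q_*\circ\hat g_*=(g\circ p)_*$ combined with the displayed equation identifies $\hat g_*\pi_1(\hat S,\hat s_0)$ with $H\cap G_0$ under the isomorphism $q_*\colon \pi_1(\hat N,\hat x_0)\to G_0$. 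Separability is preserved by this isomorphism, and Proposition~\ref{prop:ScottFiniteIndex} gives that $H$ is separable in $G$ iff $H\cap G_0$ is separable in $G_0$, which is the desired equivalence.

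The only genuine obstacle is bookkeeping: one must carefully distinguish $\hat g_*\pi_1(\hat S)$ sitting inside $\pi_1(\hat N)$ from its $q_*$--image sitting inside $\pi_1(N)$, and keep track of basepoints across the two covers. Once these identifications are pinned down, both halves of the corollary follow formally from the lifting criterion and the cited proposition of Scott.
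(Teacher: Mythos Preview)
Your proposal is correct and matches the paper's intended argument. The paper does not spell out a proof of this corollary, but its placement immediately after Definition~\ref{def:Separable} and Proposition~\ref{prop:ScottFiniteIndex} makes clear that exactly the derivation you give---lifting criterion for the existence of $\hat g$, then Scott's Proposition~\ref{prop:ScottFiniteIndex} applied to $H\cap G_0$ for the separability equivalence---is what is meant.
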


\begin{defn}
\label{defn:Blocks}
Suppose $g \colon S \looparrowright N$ is a horizontal surface in a graph manifold $N$ with JSJ decomposition $\mathcal{T}$. 
Let $\mathcal{T}_{g}$ denote the collection of components of $g^{-1}(\mathcal{T})$ in $S$.
After applying a homotopy to $g$, we may assume that the image $g(c)$ of each curve $c\in{\mathcal{T}_{g}}$ is a multiple of a simple closed curve on the corresponding JSJ torus.
The connected components of the splitting $S|g^{-1}(\mathcal{T})$ are the \emph{blocks} $B$ of $S$. 
\end{defn}

\begin{rem}
\label{rem:MeetsEachFiber}
If $g\colon S \looparrowright N$ is a horizontal surface then $\mathcal{T}_g$ is always nonempty.
Indeed, $g(S)$ has nonempty intersection with each JSJ torus of $N$ because a properly immersed horizontal surface in a connected graph manifold must intersect every fiber of every Seifert component (see Lemma~\ref{lem:UniqueIntersection} for details).
\end{rem}

In \cite{RW98}, Rubinstein--Wang introduced the dilation of a horizontal surface, and proved that dilation is the obstruction to a surface being virtually embedded (see Theorem~\ref{thm:RubinsteinWang}).

\begin{defn}[Dilation]
\label{defn:DilationSlopes}
Let $g \colon (S,s_{0}) \looparrowright (N,x_{0})$ be a horizontal surface in a simple graph manifold $N$. Choose an orientation for the graph manifold $N$, an orientation for the fiber of each Seifert component, and an orientation for each curve $c \in \mathcal{T}_{g}$.

The \emph{dilation} of a horizontal surface $S$ in $N$ is a homomorphism $w \colon \pi_1(S,s_0) \to \Q_{+}^{*}$ defined as follows. Choose $[\gamma] \in \pi_1(S,s_0)$ such that $\gamma$ is transverse to $\mathcal{T}_g$.  
In the trivial case that $\gamma$ is disjoint from the curves of the collection $\mathcal{T}_g$, we set $w(\gamma) = 1$.  Let us assume now that this intersection is nonempty.
Then $\mathcal{T}_g$ subdivides $\gamma$ into a concatenation $\gamma_1\cdots\gamma_m$ with the following properties.
Each path $\gamma_i$ starts on a circle $c_i \in \mathcal{T}_g$ and ends on the circle $c_{i+1}$.  The image $g(\gamma_i)$ of this path in $N$ lies in a Seifert component $M_i$.  The image of the circle $g(c_i)$ in $N$ lies in a JSJ torus $T_i$ obtained by gluing a boundary torus $\overleftarrow{T_i}$ of $M_{i-1}$ to a boundary torus $\overrightarrow{T_i}$ of $M_i$.
Let $\overleftarrow{f_i}$ and $\overrightarrow{f_i}$ be fibers of $M_{i-1}$ and $M_i$ in the torus $T_i$.
By Definition~\ref{defn:SimpleGraphManifold}, the $1$--cycles $[\overleftarrow{f_i}]$ and $[\overrightarrow{f_i}]$ generate the integral homology group $H_1(T_i) \cong \Z^2$, so there exist integers $a_{i}$ and $b_{i}$ such that
\[
   \bigl[g(c_{i})\bigr]=a_{i}[\overleftarrow{f_i}]+b_{i}[\overrightarrow{f_i}]
   \quad \text{in} \quad
   H_1(T_i).
\]
Since the immersion is horizontal, these coefficients $a_i$ and $b_i$ must be nonzero.
The dilation $w(\gamma)$ is the rational number
$\prod_{i=1}^{m} \bigl| b_{i}/a_{i} \bigr|$. 
Note that $w(\gamma)$ depends only on the homotopy class of $\gamma$, since crossings of $\gamma$ with a curve $c \in \mathcal{T}_g$ in opposite directions contribute terms to the dilation that cancel each other. For the rest of this paper we write $w_{\gamma}$ instead of $w(\gamma)$.
\end{defn}

The following result is a special case of \cite[Theorem~2.3]{RW98}.

\begin{thm}
\label{thm:RubinsteinWang}
A horizontal surface $g\colon{S}\looparrowright{N}$ in a simple graph manifold is virtually embedded if and only if the dilation $w$ is the trivial homomorphism.
\end{thm}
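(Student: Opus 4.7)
The plan is to exploit the geometric interpretation of dilation that the paper emphasizes: in the universal cover $\tilde{N}$, a horizontal surface lifts to an embedded surface $\tilde{S}$ whose intersection with each JSJ plane $\tilde{T}_i \cong \R^2$ is a family of parallel lines, and the integers $a_i,b_i$ of Definition~\ref{defn:DilationSlopes} record the slope of these lines relative to the two fiber directions $[\overleftarrow{f_i}]$, $[\overrightarrow{f_i}]$. Under this interpretation the dilation $w_\gamma$ becomes a product of slope-ratios as one walks around the dual path of $\gamma$ in the Bass--Serre tree of $N$. I would prove both directions using this picture, together with Corollary~\ref{cor:ScottFiniteCover}, which allows me to replace $N$ by any convenient finite cover without changing whether $S$ is virtually embedded.

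For the direction ($\Rightarrow$), I would first reduce, via Corollary~\ref{cor:ScottFiniteCover} and Proposition~\ref{prop:distortion}-type behavior of the dilation under finite covers (it is replaced by a power, so is still trivial iff the original is), to the case when $g$ is actually an embedding. In that setting, the different $\pi_1(N)$--translates of $\tilde{S}$ that share the same JSJ plane $\tilde{T}_i$ must produce \emph{parallel} line families in $\tilde{T}_i$ (otherwise two translates would cross inside a Seifert piece, contradicting embeddedness upstairs). Using equivariance of the fiber directions under the deck group, this parallelism forces $\bigl|b_i/a_i\bigr|$ on two sides of $\tilde{T}_i$ to match up compatibly around any loop $\gamma \in \pi_1(S,s_0)$, so the telescoping product defining $w_\gamma$ is $1$. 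Concretely, I would track a well-chosen invariant cross-ratio on each JSJ plane and show that it is preserved around $\gamma$.

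For the direction ($\Leftarrow$), I would use triviality of $w$ to build a fibration structure over $S^1$ on a finite cover of $N$ that has $\hat{S}$ as a fiber. On each Seifert component $M_i$, pick the fiber projection $M_i \to B_i$; this has a ``height coordinate'' defined only up to an overall positive scale. The point of $w\equiv 1$ is that the scale factors required when matching these height coordinates across JSJ tori (forced by the slope data) are consistent around every loop in $\pi_1(S)$, so one can rescale each $M_i$'s height to get a well-defined $\pi_1(S)$--equivariant map $\tilde{S}\to\R$ realizing $\tilde S$ as a level set of a map $\tilde N\to \R$. Separability of $\pi_1(S)$ (via Przytycki--Wise, cf.\ Definition~\ref{def:Separable}) then lets one pass to a finite cover in which this level set descends to an embedded fiber of a fibration over $S^1$, so $S$ is virtually embedded.

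The main obstacle is the $(\Leftarrow)$ direction: one has to turn the combinatorial compatibility ``product of slope-ratios equals $1$'' into an honest geometric fibration, which requires controlling both the rationality of scale factors (to land in a finite cover, rather than just a $\Z$--cover) and the equivariance of the height function under the full $\pi_1(N)$--action on the Bass--Serre tree, not just the $\pi_1(S)$--action. Handling this cleanly is exactly where the separability results of Scott and Przytycki--Wise enter in an essential way, and where I would expect to spend most of the technical work.
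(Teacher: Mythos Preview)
The paper does not prove this theorem: it is introduced as ``a special case of \cite[Theorem~2.3]{RW98}'' and is quoted without proof. So there is no argument in the paper to compare yours against; the result is simply imported from Rubinstein--Wang.

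That said, your proposed argument for the direction ($\Leftarrow$) contains a genuine circularity. You write that, once you have a $\pi_1(S)$--equivariant height function realizing $\tilde S$ as a level set, ``separability of $\pi_1(S)$ (via Przytycki--Wise)'' lets you pass to a finite cover in which $S$ embeds. But by Definition~\ref{def:Separable} and the discussion there, separability of $\pi_1(S)$ in $\pi_1(N)$ is \emph{equivalent} to $S$ being virtually embedded (Scott gives one direction, Przytycki--Wise the other), so you are assuming the conclusion. Moreover, Przytycki--Wise's contribution is the implication ``virtually embedded $\Rightarrow$ separable,'' which is the wrong direction for what you need. Rubinstein--Wang's actual argument does not invoke any separability theorem: the consistent slope data coming from $w\equiv 1$ is used \emph{directly} to construct the finite cover, essentially by matching the degrees of the block covers across the JSJ tori so that the lifts of $S$ fit together as embedded sheets. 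Converting ``product of slope-ratios equals $1$'' into an explicit finite cover is the entire content of the theorem and cannot be outsourced to Scott or Przytycki--Wise.

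Your sketch of ($\Rightarrow$) is closer in spirit to the original, though still vague: for an embedded $S$, Rubinstein--Wang track how the fiber intersection counts on the two sides of $S$ transform as one crosses each JSJ torus, and embeddedness forces these counts to return to their initial values around any loop in $S$, yielding $w_\gamma = 1$.
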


\begin{rem}
\label{rem:SurfaceAnnulus}
Let $g\colon{S}\looparrowright{N}$ be a horizontal surface in a simple graph manifold $M$, then each block $B$ is a connected surface with non-empty boundary and negative Euler characteristic. Indeed, the immersion $g\colon{S}\looparrowright{N}$ maps $B$ to the corresponding Seifert component $M$ with base surface $F$. The composition of $g|_{B}$ with the projection of $M$ to $F$ yields a finite covering map from $B$ to $F$.  Since $\chi(F)<0$, it follows that $\chi(B)<0$ as well.

We note that the collection $\mathcal{T}_{g}$ is always a non-empty multicurve. Indeed, $\mathcal{T}_g$ is nonempty by Remark~\ref{rem:MeetsEachFiber}.  Since the blocks of $S$ have negative Euler characteristic, it follows that $\mathcal{T}_{g}$ is a multicurve.
\end{rem}

\begin{rem}
\label{rem:MetricOnGraphManifold}
We now are going to describe a convenient metric on a simple graph manifold $N$ introduced by Kapovich--Leeb \cite{KapovichLeeb98}. For each Seifert component $M_i = F_{i} \times S^1$ of $N$, we choose a hyperbolic metric on the base surface $F_i$ so that all boundary components are totally geodesic of unit length, and then equip each Seifert component $M_i = F_i \times S^1$ with the product metric $d_i$ such that the fibers have length one. Metrics $d_i$ on $M_i$ induce the product metrics on $\tilde{M}_i$ which by abuse of notations is also denoted by $d_i$.

Let $M_i$ and $M_j$ be adjacent Seifert components in the simple graph manifold $N$, and let $T \subset M_i \cap M_j$ be a JSJ torus. Each metric space $(\tilde{T},d_{i})$ and $(\tilde{T}, d_{j})$ is a Euclidean plane. After applying a homotopy to the gluing map, we may assume that at each JSJ torus $T$, the gluing map $\phi$ from the boundary torus $\overleftarrow{T} \subset M_i$ to the boundary torus $\overrightarrow{T} \subset M_j$ is affine in the sense that the identity map $(\tilde{T},d_{i}) \to (\tilde{T}, d_{j})$ is affine.
We now have a product metric on each Seifert component $M_i = F_i \times S^1$. These metrics may not agree on the JSJ tori but the gluing maps are bilipschitz (since they are affine). 
The product metrics on the Seifert components induce a length metric on the graph manifold $N$ denoted by $d$ (see Section~3.1 of \cite{BBI01} for details). Moreover, there exists a positive constant $K$ such that on each Seifert component $M_i = F_i \times S^1$ we have
\[
   \frac{1}{K} \, d_i(x,y)
   \leq d(x,y)
   \leq K \, d_i(x,y)
\]
for all $x$ and $y$ in $M_i$. (See Lemma~1.8 of \cite{Paulik05} for a detailed proof of the last claim.)
Metric $d$ on $N$ induces metric on $\tilde{N}$ , which is also denoted by $d$ (by abuse of notations). Then for all $x$ and $y$ in $\tilde{M}_i$ we have
\[
   \frac{1}{K} \, d_{i}(x,y)
   \leq d(x,y)
   \leq K \, d_{i}(x,y)
\]
\end{rem}

The following remark introduces certain invariants of a horizontal surface $g$ that will be used in the proof of Theorem~\ref{thm:UpperBound}.

\begin{rem}\hfill
\label{rem:constants:setting up}
\begin{enumerate}
\item
\label{item:lowerbound_JSJplanes}
Since $N$ is compact, there exists a positive lower bound $\rho$ for the distance between any two distinct JSJ planes in $\tilde{N}$.

\item We recall that for each curve $c_i$ in $\mathcal{T}_g$, there exist non-zero integers $a_{i}$ and $b_{i}$ such that
\[
   \bigl[g(c_{i})\bigr]=a_{i}[\overleftarrow{f_i}]+b_{i}[\overrightarrow{f_i}]
   \quad \text{in} \quad
   H_1(T_i).
\]
The \emph{governor} of a horizontal surface $g \colon S \looparrowright N$ in a graph manifold is the quantity 
$\epsilon = \epsilon(g) = \max \bigset{\bigabs{a_{i}/b_{i}},\bigabs{b_{i}/a_{i}}}{c_{i} \in \mathcal{T}_g}$.
(We use the term ``governor'' here in the sense of a device used to limit the top speed of a vehicle or engine.  In this context the governor limits the rate of growth of the products used in calculating the dilation of curves in the surface.)
\end{enumerate}
\end{rem}

\begin{prop}
\label{prop:upperlambda}
For each $[\gamma] \in \pi_1(S,s_0)$ as in Definition~\ref{defn:DilationSlopes}, we define 
\[
\Lambda_{\gamma} =\max \bigset{\prod_{i=j}^{k} \bigl| b_{i}/a_{i} \bigr|}{1 \le j \le k \le m}
\]
If the horizontal surface $g$ is a virtual embedding, then there exists a positive constant $\Lambda$ such that $\Lambda_{\gamma} \le \Lambda$ for all $[\gamma] \in \pi_1(S,s_0)$. 
\end{prop}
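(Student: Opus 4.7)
The plan is to exploit Theorem~\ref{thm:RubinsteinWang}: the virtual embedding hypothesis gives that the dilation homomorphism $w\colon \pi_1(S,s_0) \to \Q_+^*$ is trivial. Triviality will allow us to repackage the local factors $\bigl| b_i/a_i \bigr|$ as a ``potential function'' $f$ defined on the finite set of blocks of $S$. Each subproduct is then realized as a ratio $f(B)/f(B')$, and since $f$ takes only finitely many values, the uniform bound $\Lambda = \max f / \min f$ does the job.

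To build $f$ I pass to the universal cover. Fix a lift $\tilde{s}_0 \in \tilde{S}$ of $s_0$, and for any $\tilde{x}\in\tilde{S}$ define $\tilde{f}(\tilde{x})$ to be the product of the factors $\bigl| b_i/a_i \bigr|$ incurred along any path in $\tilde{S}$ from $\tilde{s}_0$ to $\tilde{x}$ transverse to the preimage of $\mathcal{T}_g$. Two such paths differ by a lift of a loop in $S$, which has trivial dilation, so $\tilde{f}$ is well-defined. A parallel argument using the vanishing of $w$ on the deck group shows $\tilde{f}$ is invariant under $\pi_1(S)$, and hence descends to $f\colon S \to \Q_+^*$. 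Because paths confined to a single block do not cross $\mathcal{T}_g$, $f$ is constant on each block; by Remark~\ref{rem:SurfaceAnnulus} the multicurve $\mathcal{T}_g$ is finite, so $S$ has only finitely many blocks and $f$ assumes only finitely many positive values.

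To finish, let $B_i$ denote the block of $S$ containing the sub-arc $\gamma_i$, with $B_0 := B_m$ for the cyclic loop convention. For any indices $1\le j \le k \le m$, any path in $S$ that begins in $B_{j-1}$, traverses $\gamma_j \cdots \gamma_k$, and ends in $B_k$ crosses $\mathcal{T}_g$ exactly at $c_j, c_{j+1}, \ldots, c_k$, so by the construction of $f$,
\[
   \prod_{i=j}^{k} \bigl| b_i/a_i \bigr|
   = \frac{f(B_k)}{f(B_{j-1})}
   \le \Lambda.
\]
Maximizing over $j \le k$ yields $\Lambda_\gamma \le \Lambda$, as required. The main subtlety lies in the sign convention implicit in the definition of $w$: for $\prod_i \bigl| b_i/a_i \bigr|$ to be homotopy invariant and multiplicative under concatenation, the factors must silently invert upon reversal of crossing direction, and it is this signed interpretation that makes the product along a null-homotopic loop equal to $1$ and thereby makes $\tilde{f}$ well-defined on $\tilde{S}$.
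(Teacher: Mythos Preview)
Your argument is correct and is essentially the same as the paper's: both use Theorem~\ref{thm:RubinsteinWang} to conclude that the product of slope factors along a path depends only on the blocks containing its endpoints, and then invoke finiteness of the set of blocks. The paper phrases this directly in the dual graph $\Gamma_g$ (the product over any cycle is $1$, hence products over edge paths depend only on endpoints, and any two vertices are joined by a path of length $<n$, giving $\Lambda=\epsilon^n$), whereas you encode the same fact via a potential function $f$ on blocks and obtain $\Lambda=\max f/\min f$; your detour through $\tilde{S}$ is not needed, since simple connectivity of $\tilde{S}$ makes $\tilde{f}$ well-defined automatically and the only genuine use of $w\equiv 1$ is exactly the descent to $S$, which is the paper's statement that products depend only on endpoints in $\Gamma_g$.
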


\begin{proof}
Let $\Gamma_g$ be the graph dual to $\mathcal{T}_g$, and let $n$ be the number of vertices in $\Gamma_g$.
Each oriented edge $e$ of $\Gamma_g$ is dual to a curve $c \in \mathcal{T}_g$ and determines a slope $\bigabs{b/a}$ as described in Definition~\ref{defn:DilationSlopes} and Remark~\ref{rem:constants:setting up}.
By Theorem~\ref{thm:RubinsteinWang} the dilation is trivial for any loop in $S$.
Therefore for each cycle $e_1 \cdots e_m$ in $\Gamma_g$ the corresponding product of slopes $\prod_{i=1}^m \bigabs{b_i/a_i}$ is trivial. It follows that for any edge path $e_1 \cdots e_m$ the value of the product $\prod_{i=1}^m \bigabs{b_i/a_i}$
 depends only on the endpoints of the path.
Each slope $\bigabs{b_i/a_i}$ is bounded above by the governor $\epsilon$ of $g$.
Since any two vertices of $\Gamma_g$ are joined by a path of length less than $n$, the result follows, using $\Lambda = \epsilon^n$.
\end{proof}

\begin{rem}
\label{rem:straightline in torus}
Let $g\colon{S}\looparrowright{N}$ be a horizontal surface in a simple graph manifold. Equip $N$ with the metric $d$ described in Remark~\ref{rem:MetricOnGraphManifold}. By \cite[Lemma~3.1]{Neumann01} the surface $g$ can be homotoped to another horizontal surface $g' \colon S \looparrowright N$ such that the following holds: For each curve $c$ in $g'^{-1}(\mathcal{T})$, let $T$ be the JSJ torus in $N$ such that $g'(c) \subset T$. Then $g'(c)$ is $\emph{straight}$ in $T$ in the sense that lifts of $g'(c)$ to $\tilde{N}$ are straight lines in the JSJ planes containing it.
\end{rem}

\begin{rem}
\label{rem:nonempty intersection}
Let $f \colon S^1 \looparrowright T =  S^1 \times S^1$ be a horizontal immersion. Then every fiber $\{x\} \times S^1$ in the torus $T$ has non-empty intersection with $f(S^1)$. Indeed, this follows from the fact the composition of $f$ with the natural projection of $T$ to the first factor $S^1$ is a finite covering map.
\end{rem}

\section{A lower bound for distortion}
\label{sec:LowerBound}

The main theorem in this section is the following.

\begin{thm} 
\label{thm:lower}
Let $g\colon (S,s_0) \looparrowright (N,x_{0})$ be a  horizontal surface in a graph manifold $M$.  Let $G=\pi_1(N,x_{0})$ and $H=g_{*}(\pi_1(S,s_{0}))$. Then the distortion $\Delta_{H}^{G}$ is at least quadratic. Furthermore, if the horizontal surface $g$ is not virtually embedded, then the distortion $\Delta_{H}^{G}$ is at least exponential.
\end{thm}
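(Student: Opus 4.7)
The plan is first to reduce to a simple graph manifold via Theorem~\ref{thm:KLsimple}: if $\hat N \to N$ is the finite cover produced by Kapovich--Leeb, then Corollary~\ref{cor:ScottFiniteCover} gives a horizontal lift $\hat g \colon \hat S \looparrowright \hat N$ which is virtually embedded if and only if $g$ is, and Proposition~\ref{prop:distortion} shows that the distortion of $\hat g_* \pi_1(\hat S)$ in $\pi_1(\hat N)$ is equivalent to the distortion of $H$ in $G$. After this reduction, I use Corollary~\ref{cor:GeometricDistortion} to translate the problem into comparing the geodesic metric $d$ on $\tilde N$ described in Remark~\ref{rem:MetricOnGraphManifold} with a hyperbolic metric $d_S$ on $\tilde S$ built from a hyperbolic structure on $S$ with totally geodesic boundary (which exists by Remark~\ref{rem:SurfaceAnnulus}). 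The task becomes: produce a sequence of elements $h_n \in H$ with $d_{\tilde N}(\tilde x_0, h_n \tilde x_0)$ small but $d_{\tilde S}(\tilde s_0, h_n \tilde s_0)$ large, where ``small'' and ``large'' realize the claimed distortion rate.

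For the quadratic lower bound, the idea is to exhibit loops in $S$ whose lifts in $\tilde N$ are short because they exploit the fiber directions of Seifert components, while their lifts in $\tilde S$ are forced to be long. Pick a block $B \subset S$ with image in a Seifert component $M$, and two boundary components $c_1, c_2 \in \mathcal{T}_g$ of $B$ on distinct JSJ tori. By Remark~\ref{rem:straightline in torus} I may assume each $g(c_i)$ is straight, so its lifts in the JSJ planes of $\tilde N$ are Euclidean lines transverse to the fiber of $M$. Using the fiber of $M$ as a shortcut between these two lines in $\tilde M$ (Euclidean geometry in $\tilde T_i$ combined with Remark~\ref{rem:nonempty intersection}), I construct $h_n \in H$ whose $\tilde N$-lift is a concatenation of a bounded number of segments, including a fiber-direction translation of length $\sim n$ and two lifts of high powers of the $c_i$; the translation length of $h_n$ on $\tilde N$ ends up $O(n)$. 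The same element $h_n$, viewed in $\tilde S$, yields a loop made of two long segments along distinct lifts of $c_1, c_2$ (each of length $\sim n$) joined by short bridges coming from a geodesic loop $\alpha$ provided by Lemma~\ref{lem:ExistenceOfLoop}; iterating this construction $n$ times yields a piecewise geodesic of total length $\Theta(n^2)$. Lemma~\ref{lem:LoopsAreQuasigeodesic}, applied to the multicurve $\mathcal{T}_g$, shows that once the long segments exceed the threshold $\mu$ the piecewise geodesic is uniformly quasigeodesic in $\tilde S$, so $d_{\tilde S}(\tilde s_0, h_n \tilde s_0) \gtrsim n^2$.

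For the exponential lower bound when $g$ is not virtually embedded, Rubinstein--Wang's Theorem~\ref{thm:RubinsteinWang} supplies a loop $\beta$ in $S$ with dilation $w_\beta \ne 1$; replacing $\beta$ by $\beta^{-1}$ if necessary, assume $w_\beta > 1$. The guiding geometric picture, inspired by Woodhouse's tubular-group analysis, is that along the lift of $\beta$ in $\tilde N$ the successive JSJ planes it meets intersect the fiber directions at slopes that multiply by the factors $\bigabs{b_i/a_i}$, producing a net multiplicative growth of $w_\beta^n$ after iterating $\beta^n$ times. I plan to combine $\beta^n$ with a fiber-direction shortcut as in the quadratic case to obtain a family of elements $h_n \in H$ whose $d_{\tilde N}$-translation length is $O(n)$ but such that, within the corresponding loop in $\tilde S$, some lift of a curve in $\mathcal{T}_g$ must be traversed for a length $\Theta(w_\beta^n)$ because the compounded slopes can only be realized by paths stretching out by that multiplicative factor. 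Lemma~\ref{lem:LoopsAreQuasigeodesic} again converts this piecewise-geodesic estimate into a genuine lower bound $d_{\tilde S}(\tilde s_0, h_n \tilde s_0) \gtrsim w_\beta^n$, yielding exponential distortion.

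The main obstacle in both arguments is making the ``slopes of lines in JSJ planes'' picture precise enough that Euclidean geometry in each JSJ plane, combined with the piecewise-product metric of Remark~\ref{rem:MetricOnGraphManifold}, yields clean length estimates in both $\tilde N$ and $\tilde S$ simultaneously. In particular one must check that the consecutive lifts of $\mathcal{T}_g$-curves appearing in the constructed piecewise geodesics are genuinely distinct so that Lemma~\ref{lem:LoopsAreQuasigeodesic} applies, and one must verify that the fiber-direction shortcut in $\tilde N$ is realizable by a path inside a single Seifert component of linear length. Both points rest on the straight-line form of Remark~\ref{rem:straightline in torus}, the lower bound on distances between JSJ planes from Remark~\ref{rem:constants:setting up}, and the elementary fact of Remark~\ref{rem:nonempty intersection} that horizontal curves meet every fiber.
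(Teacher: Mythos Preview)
Your reduction to the simple case and your invocation of Corollary~\ref{cor:GeometricDistortion} and Lemma~\ref{lem:LoopsAreQuasigeodesic} are exactly what the paper does.  However, the heart of the argument---the actual construction of the elements $h_n$---is where your proposal has a genuine gap.

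In your quadratic construction you take two boundary curves $c_1,c_2$ of a single block $B$ lying on \emph{distinct} JSJ tori, and you propose to use ``the fiber of $M$ as a shortcut between these two lines in $\tilde M$.''  But a fiber of a single Seifert component $\tilde M=\tilde F\times\R$ is a vertical line contained in a single JSJ plane (or in the interior); it never joins two different JSJ planes.  So there is no fiber segment connecting a lift of $c_1$ to a lift of $c_2$, and the shortcut you describe does not exist.  Even setting this aside, the sentence ``iterating this construction $n$ times yields a piecewise geodesic of total length $\Theta(n^2)$'' while the $\tilde N$--length stays $O(n)$ is not justified: naively concatenating $n$ copies of a path of $\tilde N$--length $\Theta(n)$ yields $\tilde N$--length $\Theta(n^2)$, which kills the distortion.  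The same problem arises in your exponential case: taking $\beta^n$ together with a fiber segment produces a loop that is $\Theta(n)$ long in \emph{both} $\tilde S$ and $\tilde N$, so it cannot witness exponential distortion.

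What is missing is the mechanism the paper uses in Lemmas~\ref{lem:ConstructSequence} and~\ref{lem:LinearInSurface}.  One fixes a single loop $\gamma$ transverse to $\mathcal T_g$ and builds a ``spiral'' $\sigma_n=c_1^{t(1)}\gamma_1\cdots c_{nm}^{t(nm)}\gamma_{nm}$, where the integer exponents $t(j)$ are chosen recursively so that $|t(j)a_j+t(j-1)b_{j-1}|$ is bounded.  Geometrically, this condition forces the \emph{inner corners} $y_j$---the intersection points of adjacent fiber lines in each JSJ plane $\tilde T_j$---to stay a uniformly bounded distance apart in $\tilde N$, so the path $y_1,\ldots,y_{nm}$ has length $O(n)$.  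Meanwhile the same recursion forces $|t(j)|$ to grow at least linearly (and like $w_\gamma^{j/m}$ when $w_\gamma>1$), so that $\sum|t(j)|$, and hence the $\tilde S$--length of the spiral, is at least quadratic (resp.\ exponential).  A doubling trick (the ``double spiral'' $\rho_n=\sigma_n\,c_1^{t(1)}\,\sigma_n^{-1}$) is then needed so that both ends of the short inner path are close to the endpoints of the element $h_n$.  None of this delicate balancing of exponents appears in your proposal; without it, you cannot make the $\tilde N$--length linear while the $\tilde S$--length is superlinear.
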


To see the proof of Theorem~\ref{thm:lower}, we need several lemmas. For the rest of this section we fix a horizontal surface $g\colon S \looparrowright N$ in a simple graph manifold $N$. We equip $N$ with the metric $d$ described in Remark~\ref{rem:MetricOnGraphManifold} and equip $S$ with a hyperbolic metric $d_S$ such that the boundary (if nonempty) is totally geodesic and the simple closed curves of $\mathcal{T}_g$ are geodesics.

\begin{lem} 
\label{lem:ConstructSequence}
Let $\gamma$ be any geodesic loop in $S$ such that $\gamma$ and $\mathcal{T}_g$ have nonempty intersection and such that $w_\gamma \ge 1$.
There exists a positive number $A = A(\gamma)$ such that for all $\mu >0$ the following holds:
Let $\{c_1,\dots,c_m\}$ be the sequence of curves of $\mathcal{T}_{g}$ crossed by $\gamma$. The image of the circle $g(c_i)$ in $M$ lies in a JSJ torus $T_i$. For each $i= 1,2,\dots,m$, let $a_{i}$ and $b_{i}$ be the integers such that
\[
   \bigl[ g(c_i) \bigr] = a_{i}[\overleftarrow{f_i}] + b_{i}[\overrightarrow{f_i}] \quad\text{in} \quad
   H_1(T_i;\Z).
\]
Extend the sequence $a_1,\dots,a_m$ to a periodic sequence $\{a_j\}_{j=1}^\infty$ with $a_{j+m} = a_j$ for all $j>0$, and similarly extend $b_1,\dots,b_m$ to an $m$--periodic sequence $\{b_j\}_{j=1}^\infty$. Then there exists a \textup{(}nonperiodic\textup{)} sequence of integers $\bigl\{ t(j) \bigr\}_{j=1}^\infty$, depending on our choice of the constant $\mu$ and the loop $\gamma$, with the following properties:
\begin{enumerate}
   \item
   \label{item:sequence:bounded below}
   $\bigabs{t(j)}\geq{\mu}$ for all $j$.
   \item
   \label{item:sequence:ApproxUpper}
   $\bigabs{t(j)\,a_j +t(j-1)\,b_{j-1}} \le A$ for all $j > 1$.
   \item
   \label{item:sequence:growing}
   The partial sum $f(n) = \sum_{j=1}^{nm} \bigabs{t(j)}$ satisfies $f(n) \succeq n^2 + w_\gamma^n$.
\end{enumerate}
\end{lem}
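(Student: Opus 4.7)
The plan is to produce $t(j)$ by rounding a real-valued sequence $v(j)$ that nearly satisfies the homogeneous linear recursion $v(j)\,a_j + v(j-1)\,b_{j-1} = 0$. Introduce the exact real solution $u(j)$ determined by any nonzero initial value $u(1)$ via $u(j+1) = -(b_j/a_{j+1})\,u(j)$, so that $u(j)\,a_j + u(j-1)\,b_{j-1} = 0$ holds identically. A direct computation gives $\bigabs{u(j+m)} = w_\gamma \cdot \bigabs{u(j)}$, so $|u|$ grows exponentially when $w_\gamma > 1$ and is $m$-periodic when $w_\gamma = 1$. Set $R_{\min} = \min_{1 \le r \le m} \bigabs{u(r)/u(1)} > 0$. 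The construction then splits into two cases according to the value of $w_\gamma$.

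\textbf{Case $w_\gamma > 1$.} Take $v(j) = u(j)$ with $\bigabs{u(1)}$ chosen large enough that $\bigabs{u(j)} \ge 2\mu + 1$ for every $j \ge 1$; this is achievable because $\bigabs{u(j)} \ge \bigabs{u(1)}\,R_{\min}$ within the first period and the magnitudes multiply by $w_\gamma > 1$ in each subsequent period. Define $t(j)$ to be the nearest integer to $v(j)$, so $t(j) = v(j) + \eta(j)$ with $\bigabs{\eta(j)} \le 1/2$. Condition~(1) is immediate, and since $v$ satisfies the recursion exactly,
\[
   \bigabs{t(j)\,a_j + t(j-1)\,b_{j-1}}
   = \bigabs{\eta(j)\,a_j + \eta(j-1)\,b_{j-1}} \le A,
\]
where $A$ depends only on the finite set $\{a_i,b_i\}$ attached to $\gamma$. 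The partial sum $f(n)$ is asymptotic to $\bigabs{u(1)} \bigl( \sum_r R_r \bigr) (w_\gamma^n - 1)/(w_\gamma - 1)$, which is equivalent to $w_\gamma^n$ and hence dominates $n^2 + w_\gamma^n$.

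\textbf{Case $w_\gamma = 1$.} Now $|u|$ is $m$-periodic, so the previous ansatz only produces linear growth in $f(n)$. Instead, use the \emph{tilted} sequence $v(j) = (C + kD)\,u(j)$, where $k = \lfloor (j-1)/m \rfloor$ is the period index and $C, D > 0$ are constants to be selected. Within a single period the scalar $(C + kD)$ is constant, so $v$ satisfies the recursion exactly there. At a period boundary $j = km+1$, using $u(km+1)\,a_1 + u(km)\,b_m = 0$ one computes
\[
   v(j)\,a_j + v(j-1)\,b_{j-1} = -D\,u(km)\,b_m,
\]
whose absolute value equals the fixed constant $D \cdot \bigabs{u(m)} \cdot \bigabs{b_m}$ by $m$-periodicity of $|u|$. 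Choose $D$ small enough that this quantity plus the rounding error stays below a constant $A$ depending only on $\gamma$, and choose $C$ large enough (depending on $\mu$) that $C\,\bigabs{u(1)}\,R_{\min} \ge \mu + 1/2$. Rounding $v$ to the nearest integer gives $t(j)$ with $\bigabs{t(j)} \ge \mu$ and the same recursion bound. The partial sum satisfies
\[
   f(n) = \sum_{j=1}^{nm} \bigabs{t(j)}
   \sim S \sum_{k=0}^{n-1} (C + kD)
   \sim \tfrac{1}{2}\,D\,S\,n^2,
\]
where $S = \sum_{r=1}^m \bigabs{u(r)}$. Since $w_\gamma^n = 1$, this yields $f(n) \succeq n^2 + w_\gamma^n$.

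The main obstacle is the tilt construction in the case $w_\gamma = 1$: the naive ansatz $t(j) \approx u(j)$ is bounded and yields only linear growth, yet any unbounded modification threatens condition~(2). The key observation is that tilting linearly in the period index $k$ leaves the recursion exact inside each period and fails only across the finitely many period boundaries, where the $m$-periodicity of $|u|$ forces the discrepancy to be a single constant $D\bigabs{u(m)b_m}$ that is independent of $k$ and can be made arbitrarily small by choosing $D$ small; the orthogonal task of arranging $\bigabs{t(j)} \ge \mu$ is then absorbed into the independent constant $C$.
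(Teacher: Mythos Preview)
Your proof is correct, but it takes a genuinely different route from the paper's.

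The paper constructs $t(j)$ by a single unified recursion: starting from a large $t(1)$, it chooses each $t(j)$ as an integer in the interval
\[
   \frac{1+\bigabs{t(j-1)\,b_{j-1}}}{\abs{a_j}} \le \bigabs{t(j)} \le \frac{A+\bigabs{t(j-1)\,b_{j-1}}}{\abs{a_j}}
\]
with the sign of $t(j)$ chosen so that $t(j)a_j$ and $t(j-1)b_{j-1}$ have opposite signs. The crucial ``$+1$'' in the numerator yields $\bigabs{t(j)} \ge \xi + w_\gamma\,\bigabs{t(j-m)}$ after one full period, and iterating gives $\bigabs{t(km+1)} \ge \xi k + \bigabs{t(1)}\,w_\gamma^{k}$. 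This single inequality produces both the quadratic term (from $\xi k$) and the exponential term (from $w_\gamma^{k}$) simultaneously, with no case split on whether $w_\gamma=1$ or $w_\gamma>1$.

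Your approach instead writes down the exact real solution $u(j)$ of the homogeneous recursion and rounds it. That makes the exponential growth completely transparent when $w_\gamma>1$, but forces you to invent the linear tilt $(C+kD)u(j)$ to manufacture quadratic growth when $w_\gamma=1$. The tilt is a nice idea and works for exactly the reason you explain: the period-boundary defect is a constant because $\bigabs{u}$ is $m$-periodic. What the paper's recursion buys is unification---the additive $\xi$ that drives quadratic growth arises automatically from the integer rounding built into the recursive step, so no separate mechanism is needed when $w_\gamma=1$. What your approach buys is explicitness: you have closed-form expressions for $v(j)$ in both regimes, which makes the asymptotics of $f(n)$ immediate rather than requiring an induction.
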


\begin{proof}
By the definition of the dilation function, we have $w_{\gamma}=\prod_{i=1}^{m} \bigabs{b_{i}/a_{i}}$.

Let $A =  \max \bigset{1+\abs{a_i}}{i=1,2,\dots ,m}$.
Set $\xi = \min{\bigl\{ \abs{1/a_j} \bigr\}}$, and choose $\lambda \in (0,1]$ so that $\lambda \le \bigabs{b_{j-1}/a_j}$ for all $j>1$.
Starting from an initial value $t(1) \ge \mu/\lambda^{m-1}$,
we recursively construct an infinite sequence $\bigl\{ t(j) \bigr\}$ satisfying (\ref{item:sequence:ApproxUpper}).
Suppose that $t(j-1)$ has been defined for some $j>1$, and we wish to define $t(j)$.  As $A\geq 1+\abs{a_j}$, we have
\[
   1 \leq \frac{A-1}{\abs{a_j}}
   = \frac{A+ \bigabs{t(j-1) \, b_{j-1}}}{\abs{a_j}}
     - \frac{1+ \bigabs{t(j-1) \, b_{j-1}}}{\abs{a_j}}.
\]
It follows that there is an integer $t(j)$ such that 
\begin{equation}
\tag{$\clubsuit$}
\label{eqn:double:inequality}
   \frac{1 + \bigabs{ t(j-1) \, b_{j-1}} }{\abs{a_{j}}}
   \leq \abs{t(j)}
   \leq \frac{A + \bigabs{ t(j-1) \, b_{j-1}}}{\abs{a_{j}}},
\end{equation}
which is equivalent to
\[
   1 \le 
   \bigabs{t(j)\,a_j} - \bigabs{t(j-1)\,b_{j-1}} \le A.
\]
Furthermore, we are free to choose the sign of $t(j)$ so that $t(j)\,a_j$ and $t{(j-1)}\,b_{j-1}$ have opposite signs, which immediately gives (\ref{item:sequence:ApproxUpper}).
By induction, the sequence $\bigl\{ t(j) \bigr\}$ satisfies both \eqref{eqn:double:inequality} and~(\ref{item:sequence:ApproxUpper}) for all $j>1$.

We next show that any sequence $\bigl\{ t(j) \bigr\}$ satisfying \eqref{eqn:double:inequality} also satisfies (\ref{item:sequence:growing}).
Indeed, the first inequality of \eqref{eqn:double:inequality} implies 
\begin{equation}
\tag{$\diamondsuit$}
\label{eqn:SequenceIncreases}
   \bigabs{t(j)}
     \ge \frac{1}{|a_{j}|} + \left| \frac{b_{j-1}}{a_{j}} \right| \, \bigabs{t(j-1)}
     \ge \left| \frac{b_{j-1}}{a_{j}} \right| \, \bigabs{t(j-1)}
   \qquad \text{for all $j>1$.}
\end{equation}
For any $j>m$, we apply \eqref{eqn:SequenceIncreases} iteratively $m$ times (and use that the sequences $\bigl\{ a_j \bigr\}$ and $\bigl\{ b_j \bigr\}$ are $m$--periodic) to get
\begin{equation}
\tag{$\heartsuit$}
\label{eqn:SequenceOneCycle}
   \begin{split}
   \bigabs{t(j)}
     &\geq \frac{1}{|a_{j}|} + \left| \frac{b_{j-1}}{a_{j}} \right| \, \bigabs{t(j-1)} \\
     &\geq \frac{1}{|a_{j}|} + \left| \frac{ b_{j-1} \, b_{j-2} }{ a_j\, a_{j-1} } \right| \, \bigabs{t(j-2)}
     \geq \cdots \\
     &\geq \frac{1}{|a_{j}|} +
     \left| \frac{b_1 \cdots b_m}{a_1 \cdots a_m} \right| \, \bigabs{t(j-m)}  \\
     &\geq \xi + w_{\gamma} \, \bigabs{t(j-m)}.
   \end{split}
\end{equation}
Further applying \eqref{eqn:SequenceOneCycle} iteratively $k$ times (and using that $w_\gamma \ge 1$) gives
\begin{equation}
\tag{$\spadesuit$}
\label{eqn:SequenceUltimateGrowth}
   \begin{split}
   \bigabs{t(j)}
     &\geq \xi + w_{\gamma}\,\bigabs{t(j-m)}  \\
     &\geq \xi + w_{\gamma}\bigl( \xi + w_{\gamma} \, \bigabs{t(j-2m)} \bigr) \\
     &\geq 2\xi + w_\gamma^2 \bigabs{t(j-2m)} \ge \cdots \\
     &\geq \xi k + w_{\gamma}^{k} \, \bigabs{ t(j-km) }  \qquad \text{for all $j>km$}
   \end{split}
\end{equation}
The inequality \eqref{eqn:SequenceUltimateGrowth} can be rewritten in the form
\[
   \bigabs{t(km + 1)}
   \geq \xi k + \bigabs{t(1)} \, w_{\gamma}^{k}
   \qquad \text{for all $k>0$.}
\]
Finally for each positive $n$ we observe that
\[
   \sum_{j=1}^{nm} \bigabs{t(j)}
     \geq \sum_{k=1}^{n-1} \bigabs{t(km + 1)}
     \geq \xi \sum_{k=1}^{n-1} k + \bigabs{t(1)} \sum_{k=1}^{n-1} w_\gamma^k
\]
which implies (\ref{item:sequence:growing}) as desired.

In order to establish (\ref{item:sequence:bounded below}), recall that $\gamma$ satisfies $w_{\gamma} \ge 1$. Therefore \eqref{eqn:SequenceOneCycle} implies that $\bigabs{t(j)} \ge \abs{t(j-m)}$ for any $j>m$.
In particular, it follows that the terms of the sequence $\bigl\{ t(j) \bigr\}$ have absolute value bounded below by the absolute values of the first $m$ terms: $t(1),\dots,t(m)$.

Using our choice of $t(1) \ge \mu/\lambda^{m-1}$ and the fact $\lambda \le \bigabs{b_{j-1} / a_j}$, the inequality \eqref{eqn:SequenceIncreases} shows that for all $i = 1,\dots,m$ we have
\[
   \bigabs{t(i)}
     \ge \lambda^{i-1} \bigabs{t(1)}
     \ge \frac{\mu}{\lambda^{m-i}}.
\]
As $\lambda \le 1$, we conclude that $\bigabs{t(i)} \ge \mu$, completing the proof of (\ref{item:sequence:bounded below}).
\end{proof}

\begin{defn}[spirals]
Let $\gamma$ be a closed curve in $S$ satisfying the conclusion of Lemma~\ref{lem:ExistenceOfLoop}. Let $\{c_1,\dots,c_m\}$ be the sequence of curves of $\mathcal{C}$ crossed by $\gamma$. Let $\{\gamma_1,\dots,\gamma_m\}$ be the sequence of subpaths of $\gamma$ introduced in Definition~\ref{defn:DilationSlopes}.  Extend the finite sequences of curves $c_1,\dots,c_m$ and $\gamma_1,\dots,\gamma_m$ to $m$--periodic infinite sequences $\{c_{j}\}_{j=1}^\infty$ and $\{\gamma_{j}\}_{j=1}^\infty$. 
Let $\bigl\{ t(j) \bigr\}$ be a sequence of integers. We denote $\alpha_j = c_{j}^{t(j)}$.

Choose the basepoint $s_0 \in S$ to be a point of intersection between $\gamma$ and one of the curves of the family $\mathcal{C}$.
For each $n\in \N$, we define a \emph{spiral} loop $\sigma_n$ in $S$ based at $s_0$ as a concatenation:
\[
   \sigma_n
    = \alpha_1 \gamma_1 \cdots \alpha_{nm} \gamma_{nm}
\]
and a \emph{double spiral} loop $\rho_n$ of $\sigma_n$ in $S$ based at $s_0$ as $\rho_n=\sigma_n \alpha'_{nm + 1} \sigma_n^{-1}$ where $\alpha'_{nm + 1} = c_{nm + 1}^{t(1)}$.
\end{defn}

\begin{lem}
\label{lem:LinearInSurface}
Let $\bigl\{ t(j) \bigr\}_{j=1}^\infty$ be the sequence of integers given by Lemma~\ref{lem:ConstructSequence}.
Let $\rho_n$ be the double spiral loop of $\sigma_n$ coresponding to the curve $\gamma$ and the sequence $\bigl\{ t(j) \bigr\}$.
Let $\tilde{\rho}_n$ be the lift of $\rho_n$ in $\tilde{S}$.
Then the distance in $\tilde{N}$ between the endpoints of $\tilde{g}(\tilde{\rho_n})$ is bounded above by a linear function of $n$.
\end{lem}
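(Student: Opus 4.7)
The plan is to work directly in $\pi_1(N)$ and exploit the centrality of fibers in each Seifert vertex group. Since $\pi_1(N)$ acts properly and cocompactly by isometries on $\tilde{N}$, the orbit map is a quasi-isometry, so it suffices to show that $[\rho_n] \in \pi_1(N)$ has word length $O(n)$ in some finite generating set.

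Set up notation: let $\phi_j \in \pi_1(N)$ denote the fiber element of $M_j$ (well-defined in $\pi_1(N)$ because the fiber is central in $\pi_1(M_j)$), and write $g_j \in \pi_1(N)$ for the element represented by $\gamma_j$. With a consistent choice of basepoint paths, $\overleftarrow{f_j} = \phi_{j-1}$ and $\overrightarrow{f_j} = \phi_j$ in $\pi_1(N)$, so the identity $[c_j] = a_j[\overleftarrow{f_j}] + b_j[\overrightarrow{f_j}]$ in the abelian group $\pi_1(T_j) \cong \Z^2$ becomes $[\alpha_j] = \phi_{j-1}^{t(j) a_j}\phi_j^{t(j) b_j}$. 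The two commutation relations I need are $[\phi_j, g_j] = 1$ (the fiber is central in $\pi_1(M_j)$ and $g_j \in \pi_1(M_j)$), and $[\phi_j, \phi_{j+1}] = 1$ (both lie in the abelian $\pi_1(T_{j+1}) \cong \Z^2$).

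The main step is a telescoping rearrangement of
\[
[\sigma_n] = \prod_{j=1}^{nm}[\alpha_j] g_j = \phi_0^{t(1)a_1}\phi_1^{t(1)b_1} g_1 \phi_1^{t(2)a_2}\phi_2^{t(2)b_2} g_2 \cdots \phi_{nm-1}^{t(nm)a_{nm}}\phi_{nm}^{t(nm)b_{nm}} g_{nm}.
\]
Since $\phi_j$ commutes with $g_j$, each middle block $\phi_j^{t(j)b_j} g_j \phi_j^{t(j+1)a_{j+1}}$ rewrites as $g_j \phi_j^{s_j}$, where $s_j := t(j) b_j + t(j+1) a_{j+1}$. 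Lemma~\ref{lem:ConstructSequence}(\ref{item:sequence:ApproxUpper}), applied at index $j+1$, gives $|s_j| \le A$ for $j = 1, \dots, nm-1$. Iterating collapses the word down to
\[
[\sigma_n] = \omega \cdot \phi_{nm}^{t(nm) b_{nm}}, \qquad \omega = \phi_0^{t(1)a_1} g_1 \phi_1^{s_1} g_2 \phi_2^{s_2} \cdots g_{nm-1}\phi_{nm-1}^{s_{nm-1}} g_{nm}.
\]
Since $|t(1)|$ is a fixed constant, each $|s_j| \le A$, and each $\phi_j$ and $g_j$ has bounded word length, we conclude $|\omega| = O(n)$.

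The double-spiral structure is precisely what kills the residual factor $\phi_{nm}^{t(nm)b_{nm}}$. The loop $\alpha' = c_{nm+1}^{t(1)}$ lies in $\pi_1(T_{nm+1}) \cong \Z^2$, which also contains $\phi_{nm} = \overleftarrow{f_{nm+1}}$; since this group is abelian, $\phi_{nm}^k$ commutes with $\alpha'$ for every integer $k$, so
\[
[\rho_n] = [\sigma_n][\alpha'][\sigma_n]^{-1} = \omega \, \phi_{nm}^{t(nm)b_{nm}} \, \alpha' \, \phi_{nm}^{-t(nm)b_{nm}} \, \omega^{-1} = \omega \, \alpha' \, \omega^{-1},
\]
whose word length is at most $2|\omega| + |\alpha'| = O(n)$. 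The quasi-isometry between $\pi_1(N)$ and $\tilde{N}$ then gives the claimed linear bound on the distance in $\tilde{N}$ between the endpoints of $\tilde{g}(\tilde{\rho}_n)$.

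The main obstacle I expect is the careful bookkeeping of basepoints underlying the identifications $\overleftarrow{f_{j+1}} = \overrightarrow{f_j} = \phi_j$ in $\pi_1(N)$ and $[\alpha_j] = \phi_{j-1}^{t(j)a_j}\phi_j^{t(j)b_j}$. The centrality of fibers in their Seifert pieces renders these identifications canonical once a consistent basepoint convention is chosen, and then the two commutations above deliver the telescoping exactly when the glued exponent $s_j$ is bounded, which is precisely the content of Lemma~\ref{lem:ConstructSequence}(\ref{item:sequence:ApproxUpper}).
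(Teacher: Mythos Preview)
Your approach is essentially the same as the paper's, recast in algebraic language. The paper works geometrically in $\tilde{N}$: it replaces each segment $\tilde{\alpha}_j$ by the two fiber segments meeting at a corner point $y_j$ in the JSJ plane, then shows $d_j(y_j,y_{j+1}) \le \eta + A$ via a trapezoid argument in the product block $\tilde{M}_j = \tilde{F}_j \times \R$. Your telescoping identity $\phi_j^{t(j)b_j} g_j \phi_j^{t(j+1)a_{j+1}} = g_j \phi_j^{s_j}$ is exactly this trapezoid, and your cancellation of $\phi_{nm}^{t(nm)b_{nm}}$ with its inverse via commutativity with $\alpha'$ is the paper's parallelogram argument bounding $d_{nm}(y_{nm},\bar{y}_{nm})$.

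The one place that needs more care is the basepoint bookkeeping you flag at the end. As written, $\phi_j$ and $g_j$ are not elements of $\pi_1(N,x_0)$: the arc $\gamma_j$ is not a loop, and to make $\phi_j$ into a based loop you must conjugate by a connecting path. With the natural choice $\delta_j = \gamma_1\cdots\gamma_{j-1}$, the element $[\delta_j \phi_j \delta_j^{-1}] \in \pi_1(N,x_0)$ has word length growing linearly in $j$, so the assertion ``each $\phi_j$ and $g_j$ has bounded word length'' is not literally true, and the bound $|\omega| = O(n)$ does not follow by summing term lengths in $\pi_1(N,x_0)$. The repair is to regard $\omega$ not as a product of based loops but as a concrete path in $N$: concatenate the fiber loop at $p_1$ traversed $t(1)a_1$ times, then the arc $\gamma_1$, then the fiber loop at $p_2$ traversed $s_1$ times, and so on. Each piece has bounded length in $N$ (only $m$ distinct arcs and fiber circles occur, repeating periodically) and there are $O(n)$ pieces, so this path has length $O(n)$, and its lift to $\tilde{N}$ has the same endpoints as $\tilde{g}(\tilde{\sigma}_n)$ up to the residual fiber segment. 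This is precisely the paper's construction of the inner path through the corners $y_j$; once you make that translation your argument and the paper's coincide.
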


\begin{proof}
First we describe informally the idea of the proof, which is illustrated in Figure~\ref{fig:spiral}. 
The lift $\tilde{\sigma}_n$ of $\sigma_n$ is the spiral-shaped curve running around the outside of the left-hand diagram. The path $\tilde{\sigma}_n$ alternates between long segments $\tilde{\alpha}_j$ belonging to JSJ planes and short segments $\tilde{\gamma}_j$ belonging to Seifert components. Each long segment $\tilde{\alpha}_j$ is one side of a large triangle in the JSJ plane whose other two sides are fibers of the adjacent blocks, which meet in a corner $y_{j}$ opposite to $\tilde{\alpha}_j$.
Connecting each pair of adjacent corners $y_{j}$ 
produces a thin trapezoid that interpolates between two adjacent JSJ triangles.
We will see that the sequence of exponents $\bigl\{ t(j) \}$ in the construction of $\tilde{\sigma}_n$ was chosen carefully to ensure that the distances between adjacent corners $y_{j}$ are all short, \emph{i.e.}, bounded above.
Thus the path running around the inside of the spiral has at most a linear length---except for its last segment, which is a long side of a large JSJ triangle.
The double spiral gives rise to a diagram similar to the spiral diagram, expect that it has been doubled along this long triangular side, so that the long side no longer appears on the boundary of the diagram but rather appears in its interior (see Figure~\ref{fig:doublespiral}).

\usetikzlibrary{arrows}
\begin{figure}

\definecolor{wqwqwq}{rgb}{0.3764705882352941,0.3764705882352941,0.3764705882352941}
\begin{tikzpicture}[line cap=round,line join=round,>=triangle 45,x=1.0cm,y=1.0cm]
\clip(-3.8088900632617935,-0.735505967949124) rectangle (7.724379762091707,5.267425231472002);
\fill[color=wqwqwq,fill=wqwqwq,fill opacity=0.2] (3.54,4.6) -- (4.26,1.9) -- (6.24,3.46) -- cycle;
\fill[color=wqwqwq,fill=wqwqwq,fill opacity=0.2] (6.523,2.768) -- (4.0546,0.7836) -- (6.1116,-0.378) -- cycle;
\fill[line width=1.5pt,color=wqwqwq,fill=wqwqwq,fill opacity=0.2] (-2.2146,3.6178) -- (1.3484,4.7452) -- (0.2747069408740359,1.7256982005141386) -- cycle;
\fill[line width=0.5pt,color=wqwqwq,fill=wqwqwq,fill opacity=0.2] (-2.48,0.72) -- (-2.433449822454952,3.166017427960563) -- (-0.059,1.5294) -- cycle;
\fill[color=wqwqwq,fill=wqwqwq,fill opacity=0.2] (-0.98,-0.58) -- (-2.34,0.36) -- (-0.32,1.02) -- cycle;
\fill[line width=0.5pt,fill=wqwqwq,fill opacity=0.2] (0.52,-0.38) -- (-0.6,-0.64) -- (-0.02,0.76) -- cycle;
\fill[color=wqwqwq,fill=wqwqwq,fill opacity=0.2] (1.5104,1.5348) -- (1.5378,0.778) -- (0.5758,1.1316) -- cycle;
\fill[line width=0.5pt,color=wqwqwq,fill=wqwqwq,fill opacity=0.2] (1.361,0.4222) -- (0.9086,-0.136) -- (0.4,0.82) -- cycle;
\fill[color=wqwqwq,fill=wqwqwq,fill opacity=0.2] (1.3872,1.9758) -- (0.8844,2.284) -- (0.5726,1.6126) -- cycle;
\draw [line width=1.5pt] (1.3872,1.9758)-- (1.5104,1.5348);
\draw [line width=1.5pt] (1.4711201283437643,1.675404086042209) -- (1.354377556231702,1.7289216665028249);
\draw [line width=1.5pt] (1.4711201283437643,1.675404086042209) -- (1.5432224437682995,1.7816783334971755);
\draw [line width=1.5pt] (1.5378,0.778)-- (1.361,0.4222);
\draw [line width=1.5pt] (1.4124852131900894,0.5258110794854864) -- (1.3616040030283019,0.6437265662298939);
\draw [line width=1.5pt] (1.4124852131900894,0.5258110794854864) -- (1.537195996971698,0.5564734337701064);
\draw [line width=1.5pt] (0.4,0.82)-- (0.5758,1.1316);
\draw [line width=1.5pt] (-0.02,0.76)-- (0.4,0.82);
\draw [line width=1.5pt] (-0.6,-0.64)-- (-0.98,-0.58);
\draw [line width=1.5pt] (-0.871939955206372,-0.5970621123358362) -- (-0.7747097691241689,-0.5131618711197419);
\draw [line width=1.5pt] (-0.871939955206372,-0.5970621123358362) -- (-0.8052902308758297,-0.706838128880259);
\draw [line width=1.5pt] (-0.32,1.02)-- (-0.02,0.76);
\draw [line width=1.5pt] (-2.34,0.36)-- (-2.48,0.72);
\draw [line width=1.5pt] (-2.4400667583683493,0.6173145215186135) -- (-2.3186282927507293,0.5755334417080498);
\draw [line width=1.5pt] (-2.4400667583683493,0.6173145215186135) -- (-2.5013717072492705,0.5044665582919508);
\draw [line width=1.5pt] (-0.059,1.5294)-- (-0.32,1.02);
\draw [line width=1.5pt] (0.281,1.7294)-- (-0.059,1.5294);
\draw [line width=1.5pt] (6.24,3.46)-- (6.523,2.768);
\draw [line width=1.5pt] (6.412900876665428,3.0372176443375443) -- (6.290757216035282,3.0768898730317686);
\draw [line width=1.5pt] (6.412900876665428,3.0372176443375443) -- (6.47224278396472,3.151110126968228);
\draw [line width=1.5pt] (4.0546,0.7836)-- (4.26,1.9);
\draw [line width=1.5pt] (-2.433449822454952,3.1660174279605613)-- (-2.2146,3.6178);
\draw [line width=1.5pt] (-2.287860070822,3.4665655925306154) -- (-2.2357940547588986,3.3491684480465373);
\draw [line width=1.5pt] (-2.287860070822,3.4665655925306154) -- (-2.4122557676960525,3.434648979914025);
\draw [line width=0.5pt] (0.52,-0.38)-- (-0.6,-0.64);
\draw [line width=0.5pt] (-0.6,-0.64)-- (-0.02,0.76);
\draw [line width=0.5pt] (-0.02,0.76)-- (0.52,-0.38);
\draw [line width=0.5pt] (1.5378,0.778)-- (0.5758,1.1316);
\draw [line width=0.5pt] (0.5758,1.1316)-- (1.5104,1.5348);
\draw [line width=0.5pt] (0.5726,1.6126)-- (1.3872,1.9758);
\draw [line width=1.5pt] (1.7419545938239018,4.46690328536827)-- (0.2747069408740359,1.7256982005141386);
\draw [line width=1.5pt] (-2.2146,3.6178)-- (1.3484,4.7452);
\draw [line width=1.5pt] (-0.3540097787390659,4.206525628809873) -- (-0.4035242568610585,4.088029738509807);
\draw [line width=1.5pt] (-0.3540097787390659,4.206525628809873) -- (-0.4626757431389393,4.274970261490194);
\draw [line width=0.5pt] (-2.2146,3.6178)-- (0.2747069408740359,1.7256982005141386);
\draw [line width=0.5pt] (1.3484,4.7452)-- (0.2747069408740359,1.7256982005141386);
\draw [line width=1.5pt] (-2.48,0.72)-- (-2.433449822454952,3.166017427960563);
\draw [line width=1.5pt] (-2.4551464782496017,2.0259487751435534) -- (-2.358704838943609,1.9411432931882486);
\draw [line width=1.5pt] (-2.4551464782496017,2.0259487751435534) -- (-2.5547449835113425,1.9448741347723144);
\draw [line width=0.5pt] (-2.433449822454952,3.166017427960563)-- (-0.059,1.5294);
\draw [line width=0.5pt] (-0.059,1.5294)-- (-2.48,0.72);
\draw [line width=1.5pt] (-0.98,-0.58)-- (-2.34,0.36);
\draw [line width=1.5pt] (-1.7282411262224517,-0.06283333922859999) -- (-1.6042575827247092,-0.029351396282558093);
\draw [line width=1.5pt] (-1.7282411262224517,-0.06283333922859999) -- (-1.7157424172752918,-0.19064860371744283);
\draw [line width=0.5pt] (-2.34,0.36)-- (-0.32,1.02);
\draw [line width=0.5pt] (-0.32,1.02)-- (-0.98,-0.58);
\draw [line width=0.5pt] (-0.02,0.76)-- (0.52,-0.38);
\draw [line width=1.5pt] (1.361,0.4222)-- (0.9086,-0.136);
\draw [line width=1.5pt] (1.0825682850670668,0.07865317578345932) -- (1.0586355713804514,0.2048283903752843);
\draw [line width=1.5pt] (1.0825682850670668,0.07865317578345932) -- (1.2109644286195482,0.08137160962471551);
\draw [line width=0.5pt] (0.9086,-0.136)-- (0.4,0.82);
\draw [line width=0.5pt] (0.4,0.82)-- (1.361,0.4222);
\draw [line width=0.5pt] (1.5378,0.778)-- (0.5758,1.1316);
\draw [line width=1.5pt] (0.8844,2.284)-- (0.5726,1.6126);
\draw [line width=1.5pt] (0.9086,-0.136)-- (0.52,-0.38);
\draw [line width=1.5pt] (0.6440457950565651,-0.30211226455532264) -- (0.6621673237073469,-0.17497230324866883);
\draw [line width=1.5pt] (0.6440457950565651,-0.30211226455532264) -- (0.7664326762926537,-0.34102769675133277);
\draw [line width=1.5pt] (0.5726,1.6126)-- (0.5758,1.1316);
\draw [line width=1.5pt] (3.54,4.6)-- (6.24,3.46);
\draw [line width=1.5pt] (4.96642233050395,3.997732793787221) -- (4.851866029021262,3.9396827003135138);
\draw [line width=1.5pt] (4.96642233050395,3.997732793787221) -- (4.928133970978739,4.120317299686485);
\draw [line width=0.5pt] (3.54,4.6)-- (4.26,1.9);
\draw [line width=0.5pt] (3.911658777617893,3.206279583932892) -- (3.8417061119105185,3.2344549631761397);
\draw [line width=0.5pt] (3.911658777617893,3.206279583932892) -- (3.958293888089475,3.265545036823861);
\draw [line width=0.5pt] (4.26,1.9)-- (6.24,3.46);
\draw [line width=0.5pt] (5.285542104015833,2.7080028698306533) -- (5.287337159774207,2.632610527978892);
\draw [line width=0.5pt] (5.285542104015833,2.7080028698306533) -- (5.2126628402257955,2.7273894720211063);
\draw [line width=1.5pt] (6.523,2.768)-- (6.1116,-0.378);
\draw [line width=1.5pt] (6.306543608548767,1.1127452418435209) -- (6.220089831269616,1.207712098987819);
\draw [line width=1.5pt] (6.306543608548767,1.1127452418435209) -- (6.4145101687303825,1.1822879010121798);
\draw [line width=0.5pt] (6.523,2.768)-- (4.0546,0.7836);
\draw [line width=0.5pt] (5.253534634193321,1.747449411802473) -- (5.250999215736631,1.822820487742239);
\draw [line width=0.5pt] (5.253534634193321,1.747449411802473) -- (5.326600784263368,1.7287795122577605);
\draw [line width=0.5pt] (4.0546,0.7836)-- (6.1116,-0.378);
\draw [line width=0.5pt] (5.122500053428388,0.18055055806396972) -- (5.0534340774186255,0.15026659542881624);
\draw [line width=0.5pt] (5.122500053428388,0.18055055806396972) -- (5.112765922581374,0.25533340457118464);
\draw [line width=1.5pt] (1.3484,4.7452)-- (1.7419545938239018,4.46690328536827);
\draw [line width=1.5pt] (1.6129088950882646,4.558156175206353) -- (1.4885735626200263,4.526005208475765);
\draw [line width=1.5pt] (1.6129088950882646,4.558156175206353) -- (1.6017810312038758,4.686098076892505);
\draw [line width=0.5pt] (-0.6,-0.64)-- (-0.02,0.76);
\draw [line width=1.5pt] (0.52,-0.38)-- (-0.6,-0.64);
\draw [line width=1.5pt] (-0.12080631057877562,-0.5287586078129303) -- (-0.06216926377891731,-0.4145016329523566);
\draw [line width=1.5pt] (-0.12080631057877562,-0.5287586078129303) -- (-0.017830736221082746,-0.605498367047644);
\draw [line width=0.5pt] (0.5758,1.1316)-- (1.5104,1.5348);
\draw [line width=1.5pt] (1.5104,1.5348)-- (1.5378,0.778);
\draw [line width=1.5pt] (1.5271014282846413,1.0734992362840672) -- (1.426126370153896,1.1528528574817882);
\draw [line width=1.5pt] (1.5271014282846413,1.0734992362840672) -- (1.6220736298461027,1.1599471425182133);
\draw [line width=1.5pt] (0.8844,2.284)-- (1.3872,1.9758);
\draw [line width=1.5pt] (1.2065255828658927,2.086547524583795) -- (1.0845652563263033,2.046315220249401);
\draw [line width=1.5pt] (1.2065255828658927,2.086547524583795) -- (1.1870347436736979,2.2134847797506003);
\draw [line width=0.5pt] (1.3872,1.9758)-- (0.5726,1.6126);
\draw [fill=black] (0.8844,2.284) circle (2.5pt);
\draw [fill=black] (6.24,3.46) circle (1.0pt);
\draw[color=black] (6.2060504302448205,3.6887649327967647) node {$z_j$};
\draw[color=black] (6.707705264386586,3.2463378427221756) node {$\tilde{\gamma}_j$};
\draw [fill=black] (4.26,1.9) circle (1.0pt);
\draw[color=black] (4.02408046328605,1.9196081838789977) node {$y_j$};
\draw [fill=black] (3.54,4.6) circle (1.0pt);
\draw[color=black] (3.531377567521166,4.865218785949649) node {$x_j$};
\draw [fill=black] (6.523,2.768) circle (1.0pt);
\draw[color=black] (7.0088084871477625,2.673193657852822) node {$x_{j+1}$};
\draw [fill=black] (4.0546,0.7836) circle (1.0pt);
\draw[color=black] (3.6134736907674026,0.7325475582074663) node {$y_{j+1}$};
\draw [fill=black] (6.1116,-0.378) circle (1.0pt);
\draw[color=black] (6.59654688048735,-0.4137408115312413) node {$z_{j+1}$};
\draw [fill=black] (1.7419545938239018,4.46690328536827) circle (2.5pt);
\draw[color=black] (4.968713998882993,4.342350406770589) node {$\tilde{\alpha}_j$};
\draw[color=black] (6.887594942110468,1.2051401316962318) node {$\tilde{\alpha}_{j+1}$};
\end{tikzpicture}

\caption{On the left, the piecewise geodesic $\tilde{\sigma}_n$ is the spiral-shaped path running around the outside of the diagram. 
On the right is a magnified portion of the left-hand picture showing more detail.}
\label{fig:spiral}
\end{figure}
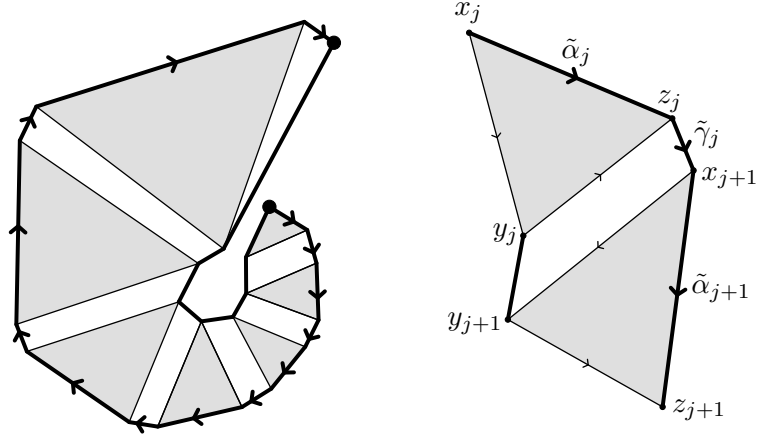

To be more precise, let $M_j$ be the Seifert component of $N$ containing $\gamma_j$, with its given product metric as a hyperbolic surface crossed with a circle of length one.
Let $(\tilde{M}_j, d_j)$ be the Seifert component of $\tilde{N}$ that contains $\tilde{\gamma}_j$, with the product metric $d_j$ induced by lifting the given metric on $M_j$.
Let $\tilde{T}_j$ be the JSJ plane containing $\tilde{\alpha}_j$. Recall that $\tilde{T}_j$ is a topological plane that is a subspace of both $\tilde{M}_{j-1}$ and $\tilde{M}_j$.  However the metrics $d_{j-1}$ and $d_j$ typically do not agree on the common subspace $\tilde{T}_j$.
Each metric space $\overleftarrow{E_j} = (\tilde{T}_j,d_{j-1})$ and $\overrightarrow{E_j} = (\tilde{T}_j, d_{j})$ is a Euclidean plane and the identity map $\overleftarrow{E_j} \to \overrightarrow{E_j}$ is affine.

The plane $\tilde{T}_j$ universally covers a JSJ torus $T_j$ obtained by identifying boundary tori $\overleftarrow{T_j}$ and $\overrightarrow{T_j}$ of Seifert components $M_{j-1}$ and $M_j$. 
The initial and terminal points $x_j$ and $z_j$ of $\tilde{\alpha}_j$ are contained in Euclidean geodesics $\overleftarrow{\ell_j} \subset \overleftarrow{E_j}$ and $\overrightarrow{\ell_j} \subset \overrightarrow{E_j}$ that project to fibers $\overleftarrow{f_j} \subset \overleftarrow{T_j}$ and $\overrightarrow{f_j} \subset \overrightarrow{T_j}$ respectively.
The lines $\overleftarrow{\ell_j}$ and $\overrightarrow{\ell_j}$ intersect in a unique point $y_j$.
Similarly, we consider the subpath $\tilde{\alpha}_{j}^{-1}$ in the double spiral $\tilde{\rho}_n$. 
Let $\bar{y}_j$ be the intersection point of the two fibers that contain its endpoints.

Our goal is to find a linear upper bound for the distance in $(N,d)$ between the endpoints of $\tilde{\rho}_n$.
By the triangle inequality it suffices to produce an upper bound for the distance between successive points of the linear sequence $y_1,\dots,y_{nm},\bar{y}_{nm},\dots,\bar{y}_1$.
Recall that the inclusions $(\tilde{M}_j,d_j) \to (\tilde{N},d)$ are $K$--bilipschitz for some universal constant $K$, as explained in Remark~\ref{rem:MetricOnGraphManifold}.
Thus it is enough to bound the distance between these points with respect to the given product metrics on each Seifert component.

Let
$\eta$ be the maximum of lengths of $\tilde{\gamma}_j$ with respect to metric $d_j$.
Let $A$ be the constant given by Lemma~\ref{lem:ConstructSequence}.
We claim that
\begin{equation}
\label{eqn:HomotopicBounded}
\tag{$\star$}
   d_j(y_{j},y_{j+1}) \le \eta + A.
\end{equation}
We prove this claim by examining the quadrilateral with vertices $y_j$, $z_j$, $x_{j+1}$, and $y_{j+1}$, illustrated on the right-hand side of Figure~\ref{fig:spiral}.  This quadrilateral is a trapezoid in the sense that the opposite sides $[y_j,z_j]$ and $[y_{j+1},x_{j+1}]$ lie in fibers of $\tilde{M}_j$ that are parallel lines in the product metric $d_j$.

To find the lengths of these parallel sides, we must first examine the JSJ triangle $\Delta(x_j,y_j,z_j)$, also shown on the right-hand side of Figure~\ref{fig:spiral}.
We consider this triangle to be a pair of homotopic paths $\tilde{\alpha}_j$ and $\tau_j$ from $x_j$ to $y_j$ in the plane $\tilde{T}_j$, where $\tau_j = [x_j,y_j] \cup [y_j,z_j]$ is a segment of $\overleftarrow{\ell_j}$ concatenated with a segment of $\overrightarrow{\ell_j}$.
In particular, $\tilde{\alpha}_j$ and $\tau_j$ project to homotopic loops in $T_j$ of the form $g(\alpha_j)$ and $(\overleftarrow{f_j})^{r_j} (\overrightarrow{f_j})^{s_j}$ for some $r_j,s_j \in \Z$.
Recall that the fiber $\overleftarrow{f_j}$ has length one in the product metric on $M_{j-1}$.  Similarly the fiber $\overrightarrow{f_j}$ has length one in $M_j$.
It follows that $d_{j-1}(x_j,y_j) = \abs{r_j}$ and $d_j(y_j,z_j) = \abs{s_j}$.
Since $g(\alpha_j) = g(c_j)^{t(j)}$, the homology relation (written additively)
\[
   t(j) \, \bigl[ g(c_j) \bigr] = t(j)\,a_{j}\, [\overleftarrow{f_j}] + t(j)\,b_{j}\,[\overrightarrow{f_j}] \quad\text{in} \quad
   H_1(T_j;\Z) \approx \pi_1(T_j),
\]
implies that $r_j = t(j)\,a_j$ and $s_j = t(j)\,b_j$.

Let $\beta_j$ be the lift of $\gamma_j$ based at $y_j$.
The fiber which contains $y_{j+1}$ and $x_{j+1}$ will intersect $\beta_j$ exactly at one point. We denote this point by $u_j$. 
It follows that $d_{j}(u_j,y_j) \le \eta$ and $d_{j}(u_j,y_{j+1}) = \bigabs{t(j+1)\,a_{j+1} + t(j)\,b_j}$. Using the triangle inequality for $\Delta(y_j,u_j,y_{j+1})$, we have
\begin{align*}
d_{j}(y_j,y_{j+1}) &\le d_{j}(y_j,u_j) + d_{j}(u_j,y_{j+1})\\
&\le \eta + \bigabs{t(j+1)\,a_{j+1} + t(j)\,b_j}\\
&\le \eta + A
\end{align*}
by Lemma~\ref{lem:ConstructSequence}(\ref{item:sequence:ApproxUpper}), completing the proof of \eqref{eqn:HomotopicBounded}.

\usetikzlibrary{arrows}
\begin{figure}
\definecolor{wqwqwq}{rgb}{0.3764705882352941,0.3764705882352941,0.3764705882352941}
\begin{tikzpicture}[line cap=round,line join=round,>=triangle 45,x=1.0cm,y=1.0cm]
\clip(1.826997701735815,-0.9248890880809865) rectangle (13.519613097484152,5.2296509084713145);
\fill[line width=0.pt,color=wqwqwq,fill=wqwqwq,fill opacity=0.2] (6.918181818181819,4.938181818181819) -- (6.818181818181818,2.1781818181818187) -- (4.605454545454546,3.767272727272727) -- cycle;
\fill[color=wqwqwq,fill=wqwqwq,fill opacity=0.2] (6.543636363636364,0.25090909090909086) -- (6.961818181818183,-0.44) -- (7.234545454545454,0.01454545454545451) -- cycle;
\fill[color=wqwqwq,fill=wqwqwq,fill opacity=0.2] (4.058181818181818,0.2254545454545456) -- (5.670909090909096,1.17818181818182) -- (4.58,-0.4763636363636339) -- cycle;
\fill[line width=2.pt,color=wqwqwq,fill=wqwqwq,fill opacity=0.2] (4.363240324032511,3.471233123312465) -- (6.4,1.9745454545454404) -- (3.743636363636367,2.269090909090911) -- cycle;
\fill[line width=0.5pt,color=wqwqwq,fill=wqwqwq,fill opacity=0.2] (3.949090909090918,0.5781818181817883) -- (3.683609345865521,1.8704817534913831) -- (5.709070617936441,1.6698836797772711) -- cycle;
\fill[line width=0.5pt,color=wqwqwq,fill=wqwqwq,fill opacity=0.2] (6.616363636363642,-0.6036363636363611) -- (6.052727272727273,-0.7672727272727284) -- (6.052727272727272,0.2872727272727282) -- cycle;
\fill[line width=0.5pt,color=wqwqwq,fill=wqwqwq,fill opacity=0.2] (5.652727272727274,-0.6945454545454544) -- (4.907548589341696,-0.6651285266457689) -- (5.670909090909096,0.5236363636363657) -- cycle;
\fill[line width=2.pt,color=wqwqwq,fill=wqwqwq,fill opacity=0.2] (8.095865696502162,4.923552204786535) -- (8.127273989655489,2.1619198036293508) -- (10.378792881563966,3.6955542386254048) -- cycle;
\fill[line width=2.pt,color=wqwqwq,fill=wqwqwq,fill opacity=0.2] (10.616719417901304,3.3895495046901027) -- (8.540268256333876,1.9479582712369445) -- (11.20312895590428,2.176426404839135) -- cycle;
\fill[line width=0.pt,color=wqwqwq,fill=wqwqwq,fill opacity=0.2] (10.95573412816692,0.4911427719820856) -- (11.25323564532997,1.7764491286533148) -- (9.223416360942931,1.626227210920047) -- cycle;
\fill[color=wqwqwq,fill=wqwqwq,fill opacity=0.2] (10.83791482423075,0.1412342596651171) -- (9.249351801525389,1.1337291137021142) -- (10.298823880073654,-0.5474049267993321) -- cycle;
\fill[color=wqwqwq,fill=wqwqwq,fill opacity=0.2] (9.221007802226715,-0.7388719207504169) -- (9.966687279135932,-0.7279749699972508) -- (9.233092311738414,0.47938564039624937) -- cycle;
\fill[color=wqwqwq,fill=wqwqwq,fill opacity=0.2] (8.259927062330156,-0.6240533006048894) -- (8.819324625230138,-0.8016403962479197) -- (8.845520469886926,0.252579644078198) -- cycle;
\fill[color=wqwqwq,fill=wqwqwq,fill opacity=0.2] (8.353859559919362,0.22842184623476774) -- (7.91909564302622,-0.4337098272990246) -- (7.657292189006765,0.009293942316121417) -- cycle;
\draw [line width=1.5pt] (6.543636363636364,0.25090909090909086)-- (6.052727272727272,0.2872727272727282);
\draw [line width=1.5pt] (5.670909090909096,0.5236363636363657)-- (6.052727272727272,0.2872727272727282);
\draw [line width=1.5pt] (6.961818181818183,-0.44)-- (6.616363636363642,-0.6036363636363611);
\draw [line width=1.5pt] (6.71374239008285,-0.5575095855588407) -- (6.746910159215588,-0.4327699320813756);
\draw [line width=1.5pt] (6.71374239008285,-0.5575095855588407) -- (6.831271658966236,-0.610866431554984);
\draw [line width=1.5pt] (5.707272727272726,1.6690909090909087)-- (6.4,1.9745454545454542);
\draw [line width=1.5pt] (4.058181818181818,0.2254545454545456)-- (3.9490909090909088,0.5781818181818185);
\draw [line width=1.5pt] (3.9767622716044304,0.488711079388096) -- (4.105011410801259,0.4331712891887704);
\draw [line width=1.5pt] (3.9767622716044304,0.488711079388096) -- (3.902261316471467,0.3704650744475927);
\draw [line width=1.5pt] (5.709070617936442,1.6698836797772718)-- (5.670909090909096,1.17818181818182);
\draw [line width=1.5pt] (6.4,1.9745454545454542)-- (6.818181818181818,2.1781818181818187);
\draw [line width=1.5pt] (4.36,3.4672727272727277)-- (4.6054545454545455,3.7672727272727276);
\draw [line width=1.5pt] (4.535523036744678,3.6818008832940006) -- (4.558987820752411,3.5548777334339725);
\draw [line width=1.5pt] (4.535523036744678,3.6818008832940006) -- (4.406466724702134,3.679667721111482);
\draw [line width=1.5pt] (3.683609345865521,1.8704817534913831)-- (3.743636363636367,2.269090909090911);
\draw [line width=1.5pt] (3.726038301718556,2.15223105394992) -- (3.811057526984035,2.055113530329423);
\draw [line width=1.5pt] (3.726038301718556,2.15223105394992) -- (3.616188182517852,2.0844591322528716);
\draw [line width=1.5pt] (6.052727272727273,-0.7672727272727284)-- (5.652727272727274,-0.6945454545454544);
\draw [line width=1.5pt] (5.770697797549621,-0.7159946408767907) -- (5.870353440947264,-0.6339651656991355);
\draw [line width=1.5pt] (5.770697797549621,-0.7159946408767907) -- (5.835101104507283,-0.8278530161190482);
\draw [line width=1.5pt] (5.670909090909096,1.17818181818182)-- (5.670909090909096,0.5236363636363657);
\draw [line width=1.5pt] (4.907548589341696,-0.6651285266457689)-- (4.58,-0.47636363636363394);
\draw [line width=1.5pt] (4.664970043516121,-0.5253315191862246) -- (4.796757950709068,-0.47880778849085115);
\draw [line width=1.5pt] (4.664970043516121,-0.5253315191862246) -- (4.690790638632626,-0.6626843745185526);
\draw [line width=1.5pt] (6.81045399060447,2.1744187021441537)-- (7.206587745447177,2.427790820484651);
\draw [line width=1.5pt] (7.289090909090914,5.069090909090917)-- (7.725454545454553,5.087272727272727);
\draw [line width=1.5pt] (7.590574758137849,5.081652736134533) -- (7.511374721216847,4.979733963523046);
\draw [line width=1.5pt] (7.590574758137849,5.081652736134533) -- (7.50317073332862,5.176629672840598);
\draw [line width=1.5pt] (4.58,-0.47636363636363394)-- (4.058181818181818,0.2254545454545456);
\draw [line width=1.5pt] (4.2648216513877015,-0.05246523021608637) -- (4.4042451102024405,-0.062140411467468105);
\draw [line width=1.5pt] (4.2648216513877015,-0.05246523021608637) -- (4.233936707979376,-0.18876867944162218);
\draw [line width=0.5pt] (4.058181818181818,0.2254545454545456)-- (5.670909090909097,1.178181818181847);
\draw [line width=0.5pt] (5.670909090909097,1.178181818181847)-- (4.58,-0.47636363636363394);
\draw [line width=1.5pt] (4.605454545454546,3.767272727272727)-- (6.918181818181819,4.938181818181819);
\draw [line width=1.5pt] (5.836202362798583,4.390387188255087) -- (5.806325354714686,4.264818695204977);
\draw [line width=1.5pt] (5.836202362798583,4.390387188255087) -- (5.717311008921677,4.440635850249569);
\draw [line width=0.5pt] (6.918181818181819,4.938181818181819)-- (6.818181818181818,2.1781818181818187);
\draw [line width=1.5pt] (6.918181818181819,4.938181818181819)-- (7.289090909090915,5.069090909090923);
\draw [line width=1.5pt] (7.182257532316632,5.031385011405879) -- (7.136430220091239,4.910720437014234);
\draw [line width=1.5pt] (7.182257532316632,5.031385011405879) -- (7.070842507181495,5.096552290258507);
\draw [line width=1.5pt] (3.743636363636367,2.269090909090911)-- (4.363240324032511,3.471233123312465);
\draw [line width=1.5pt] (4.091635678546343,2.9442716595593126) -- (4.141022467802537,2.8250197115421636);
\draw [line width=1.5pt] (4.091635678546343,2.9442716595593126) -- (3.9658542198663413,2.915304320861212);
\draw [line width=0.5pt] (4.363240324032511,3.471233123312465)-- (6.4,1.9745454545454404);
\draw [line width=0.5pt] (6.4,1.9745454545454404)-- (3.743636363636367,2.269090909090911);
\draw [line width=0.5pt] (3.949090909090918,0.5781818181817883)-- (3.683609345865521,1.8704817534913831);
\draw [line width=0.5pt] (3.683609345865521,1.8704817534913831)-- (5.709070617936441,1.6698836797772711);
\draw [line width=0.5pt] (5.709070617936441,1.6698836797772711)-- (3.949090909090918,0.5781818181817883);
\draw [line width=0.5pt] (5.709070617936441,1.6698836797772711)-- (3.949090909090918,0.5781818181817883);
\draw [line width=1.5pt] (6.616363636363642,-0.6036363636363611)-- (6.052727272727273,-0.7672727272727284);
\draw [line width=1.5pt] (6.254477250242472,-0.7087001531554113) -- (6.30707337271716,-0.5908284858237413);
\draw [line width=1.5pt] (6.254477250242472,-0.7087001531554113) -- (6.362017536373754,-0.7800806050853482);
\draw [line width=0.5pt] (6.052727272727273,-0.7672727272727284)-- (6.052727272727272,0.2872727272727282);
\draw [line width=0.5pt] (6.052727272727272,0.2872727272727282)-- (6.616363636363642,-0.6036363636363611);
\draw [line width=1.5pt] (5.652727272727274,-0.6945454545454544)-- (4.907548589341696,-0.6651285266457689);
\draw [line width=1.5pt] (5.19682850939695,-0.6765482394486506) -- (5.28402463693544,-0.5813804013876145);
\draw [line width=1.5pt] (5.19682850939695,-0.6765482394486506) -- (5.27625122513353,-0.7782935798036107);
\draw [line width=0.5pt] (4.907548589341696,-0.6651285266457689)-- (5.670909090909096,0.5236363636363657);
\draw [line width=0.5pt] (5.670909090909096,0.5236363636363657)-- (5.652727272727274,-0.6945454545454544);
\draw [line width=1.5pt] (7.206587745447176,2.4277908204846548)-- (7.711523220315041,2.418548909723031);
\draw [line width=0.5pt] (7.289090909090931,5.0690909090909235)-- (7.2065877454471705,2.427790820484651);
\draw [line width=0.5pt] (8.095865696502162,4.923552204786535)-- (8.127273989655489,2.1619198036293508);
\draw [line width=0.5pt] (8.127273989655489,2.1619198036293508)-- (10.378792881563966,3.6955542386254048);
\draw [line width=0.5pt] (10.616719417901304,3.3895495046901027)-- (8.540268256333876,1.9479582712369445);
\draw [line width=0.5pt] (8.540268256333876,1.9479582712369445)-- (11.20312895590428,2.176426404839135);
\draw [line width=0.5pt] (11.20312895590428,2.176426404839135)-- (10.616719417901304,3.3895495046901027);
\draw [line width=0.5pt] (10.83791482423075,0.1412342596651171)-- (9.249351801525389,1.1337291137021142);
\draw [line width=0.5pt] (9.249351801525389,1.1337291137021142)-- (10.298823880073654,-0.5474049267993321);
\draw [line width=0.5pt] (9.966687279135932,-0.7279749699972508)-- (9.233092311738414,0.47938564039624937);
\draw [line width=0.5pt] (9.233092311738414,0.47938564039624937)-- (9.221007802226715,-0.7388719207504169);
\draw [line width=1.5pt] (8.259927062330156,-0.6240533006048894)-- (8.819324625230138,-0.8016403962479197);
\draw [line width=1.5pt] (8.61909189892233,-0.7380742481700684) -- (8.509811644083088,-0.8067612772308059);
\draw [line width=1.5pt] (8.61909189892233,-0.7380742481700684) -- (8.569440043477202,-0.6189324196220023);
\draw [line width=0.5pt] (8.819324625230138,-0.8016403962479197)-- (8.845520469886926,0.252579644078198);
\draw [line width=0.5pt] (8.845520469886926,0.252579644078198)-- (8.259927062330156,-0.6240533006048894);
\draw [line width=0.5pt] (8.353859559919362,0.22842184623476774)-- (7.91909564302622,-0.4337098272990246);
\draw [line width=0.5pt] (7.91909564302622,-0.4337098272990246)-- (7.657292189006765,0.009293942316121417);
\draw [line width=1.5pt] (7.657292189006765,0.009293942316121417)-- (8.353859559919362,0.22842184623476774);
\draw [line width=1.5pt] (8.095865696502162,4.923552204786536)-- (7.725454545454553,5.087272727272727);
\draw [line width=1.5pt] (7.834402598557707,5.039118042600155) -- (7.950493984198066,5.095534992526765);
\draw [line width=1.5pt] (7.834402598557707,5.039118042600155) -- (7.870826257758648,4.915289939532498);
\draw [line width=1.5pt] (10.616719417901304,3.3895495046901027)-- (10.378792881563967,3.6955542386254048);
\draw [line width=1.5pt] (10.446579614204714,3.6083716090398386) -- (10.575543112093278,3.603033231827119);
\draw [line width=1.5pt] (10.446579614204714,3.6083716090398386) -- (10.419969187371994,3.4820705114883883);
\draw [line width=1.5pt] (11.25323564532997,1.7764491286533148)-- (11.203128955904281,2.1764264048391344);
\draw [line width=1.5pt] (11.21781868489225,2.0591654591278856) -- (11.325951391613629,1.9886856762392582);
\draw [line width=1.5pt] (11.21781868489225,2.0591654591278856) -- (11.130413209620619,1.9641898572531908);
\draw [line width=0.5pt] (9.223416360942931,1.626227210920047)-- (10.95573412816692,0.4911427719820856);
\draw [line width=1.5pt] (10.83791482423075,0.14123425966511713)-- (10.955734128166931,0.49114277198211553);
\draw [line width=1.5pt] (10.9234300807038,0.3952038145107263) -- (10.990206192829058,0.2847455286813904);
\draw [line width=1.5pt] (10.9234300807038,0.3952038145107263) -- (10.803442759568624,0.34763150296584194);
\draw [line width=1.5pt] (10.298823880073654,-0.5474049267993322)-- (10.83791482423075,0.1412342596651171);
\draw [line width=1.5pt] (10.6197628467841,-0.13743486268373648) -- (10.645956272287092,-0.26382309995026254);
\draw [line width=1.5pt] (10.6197628467841,-0.13743486268373648) -- (10.490782432017312,-0.14234756718395483);
\draw [line width=1.5pt] (9.966687279135934,-0.7279749699972508)-- (10.298823880073655,-0.5474049267993322);
\draw [line width=1.5pt] (10.206004645205603,-0.5978672161176501) -- (10.179818808663732,-0.7242570259265232);
\draw [line width=1.5pt] (10.206004645205603,-0.5978672161176501) -- (10.085692350545857,-0.5511228708700607);
\draw [line width=1.5pt] (9.221007802226715,-0.7388719207504169)-- (9.966687279135932,-0.7279749699972508);
\draw [line width=1.5pt] (9.67721294998966,-0.7322051890212311) -- (9.595287298188946,-0.8319462018291446);
\draw [line width=1.5pt] (9.67721294998966,-0.7322051890212311) -- (9.592407783173702,-0.6349006889185241);
\draw [line width=1.5pt] (8.819324625230138,-0.8016403962479197)-- (9.221007802226714,-0.7388719207504167);
\draw [line width=1.5pt] (9.102540864220229,-0.7573839957422643) -- (9.035378769713855,-0.8676080181713043);
\draw [line width=1.5pt] (9.102540864220229,-0.7573839957422643) -- (9.004953657742993,-0.6729042988270326);
\draw [line width=1.5pt] (7.91909564302622,-0.4337098272990246)-- (8.259927062330156,-0.6240533006048894);
\draw [line width=1.5pt] (8.16230342846283,-0.5695336158146888) -- (8.041468018658598,-0.6149085626065357);
\draw [line width=1.5pt] (8.16230342846283,-0.5695336158146888) -- (8.13755468669778,-0.442854565297378);
\draw [line width=1.5pt] (8.540268256333842,1.9479582712369592)-- (8.127273989655489,2.1619198036293508);
\draw [line width=1.5pt] (9.225194003681967,1.6253900236696759)-- (8.540268256333842,1.9479582712369592);
\draw [line width=1.5pt] (9.22341636094293,1.626227210920048)-- (9.249351801525389,1.1337291137021142);
\draw [line width=1.5pt] (9.249351801525389,1.1337291137021142)-- (9.233092311738416,0.47938564039624953);
\draw [line width=1.5pt] (8.353859559919362,0.22842184623476774)-- (8.845520469886926,0.252579644078198);
\draw [line width=1.5pt] (7.234545454545454,0.01454545454545451)-- (6.961818181818183,-0.44);
\draw [line width=1.5pt] (7.055286089421738,-0.2842201539940742) -- (7.0136902312301475,-0.1620323205562664);
\draw [line width=1.5pt] (7.055286089421738,-0.2842201539940742) -- (7.18267340513349,-0.2634222248982792);
\draw [line width=1.5pt] (6.543636363636364,0.25090909090909086)-- (7.234545454545454,0.01454545454545451);
\draw [line width=0.5pt] (6.543636363636364,0.25090909090909086)-- (6.961818181818183,-0.44);
\draw [line width=1.5pt] (3.949090909090918,0.5781818181817883)-- (3.683609345865521,1.8704817534913831);
\draw [line width=1.5pt] (3.7995726307230777,1.306000575593253) -- (3.912867788099732,1.2441597365472081);
\draw [line width=1.5pt] (3.7995726307230777,1.306000575593253) -- (3.719832466856707,1.2045038351259625);
\draw [line width=0.5pt] (3.683609345865521,1.8704817534913831)-- (5.709070617936441,1.6698836797772711);
\draw [line width=0.5pt] (6.818181818181818,2.1781818181818187)-- (4.605454545454546,3.767272727272727);
\draw [line width=1.5pt] (10.378792881563966,3.6955542386254048)-- (8.095865696502162,4.923552204786535);
\draw [line width=1.5pt] (9.163903567418053,4.349049285092112) -- (9.284006454853047,4.396329074523713);
\draw [line width=1.5pt] (9.163903567418053,4.349049285092112) -- (9.19065212321308,4.222777368888226);
\draw [line width=1.5pt] (8.13490595341996,2.1579658827302968)-- (7.711523220315041,2.418548909723031);
\draw [line width=0.5pt] (7.725454545454553,5.087272727272727)-- (7.711523220315041,2.418548909723031);
\draw [line width=1.5pt] (11.20312895590428,2.176426404839135)-- (10.616719417901304,3.3895495046901027);
\draw [line width=1.5pt] (10.873638954062738,2.858052310456455) -- (10.998636607265867,2.825870502666503);
\draw [line width=1.5pt] (10.873638954062738,2.858052310456455) -- (10.821211766539717,2.7401054068627353);
\draw [line width=0.5pt] (11.25323564532997,1.7764491286533148)-- (9.223416360942931,1.626227210920047);
\draw [line width=1.5pt] (10.95573412816692,0.4911427719820856)-- (11.25323564532997,1.7764491286533148);
\draw [line width=1.5pt] (11.12328593145457,1.2150227706152212) -- (11.20048021982733,1.1115765338468238);
\draw [line width=1.5pt] (11.12328593145457,1.2150227706152212) -- (11.008489553669559,1.1560153667885762);
\draw [line width=1.5pt] (9.233092311738416,0.47938564039624953)-- (8.845520469886926,0.252579644078198);
\draw [line width=1.5pt] (7.657292189006765,0.009293942316121417)-- (7.91909564302622,-0.4337098272990246);
\draw [line width=1.5pt] (7.83061229400152,-0.2839850755270273) -- (7.703366395156269,-0.26233875283739605);
\draw [line width=1.5pt] (7.83061229400152,-0.2839850755270273) -- (7.873021436876715,-0.16207713214550748);
\draw [fill=black] (7.234545454545454,0.01454545454545451) circle (2.5pt);
\draw [fill=black] (7.657292189006765,0.009293942316121417) circle (2.5pt);
\end{tikzpicture}

\caption{The outer path represents the piecewise geodesic $\tilde{\rho}_n$.}
\label{fig:doublespiral}
\end{figure}
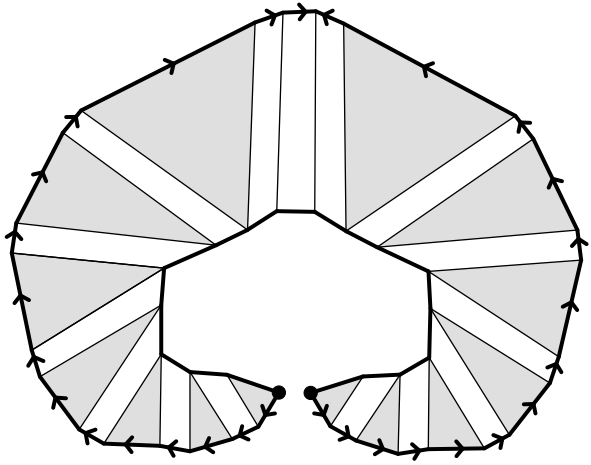

For a similar reason, the distance in the corresponding Seifert component between $\bar{y}_j$ and $\bar{y}_{j-1}$ is at most $\eta + A$. Thus it suffices to find an upper bound for $d_{nm}(y_{nm},\bar{y}_{nm})$.
Let $R$ be the length of $\tilde{c}_1$ in the metric $d_{nm}$.
Let $\bar{z}_{nm}$ be the initial point of $\tilde{\alpha}_{nm}^{-1}$. Since $\alpha_{nm}' = c_{nm+1}^{t(1)}$, the $d_{nm}$--distance between the endpoints of $\tilde{\alpha}_{nm+1}'$ is at most $R\,\bigabs{t(1)}$.
By the triangle inequality, $d_{nm}(z_{nm},\bar{z}_{nm}) \le 2\eta + R\,\bigabs{t(1)}$.

We note that $[z_{nm},y_{nm}]$ and $[\bar{z}_{nm},\bar{y}_{nm}]$ lie in parallel fibers of $\tilde{M}_{nm}$.  Furthermore they are oriented in the same direction with respect to the fiber and have the same length.  Therefore they form opposite sides of a Euclidean parallelogram in $\tilde{M}_{nm}$.  In particular the distances $d_{nm}(y_{nm},\bar{y}_{nm})$ and $d_{nm}(z_{nm},\bar{z}_{nm})$ are equal, so that $d_{nm}(y_{nm},\bar{y}_{nm}) \le 2\eta + R\,\bigabs{t(1)}$.
\end{proof}

\begin{proof}[Proof of Theorem~\ref{thm:lower}]
Any graph manifold has a simple finite cover by Theorem~\ref{thm:KLsimple}.  By Corollary~\ref{cor:ScottFiniteCover} and Proposition~\ref{prop:distortion}, it suffices to prove the theorem for all horizontal surfaces in this cover.  Thus we assume, without loss of generality that $N$ is a simple graph manifold.

Let $\gamma$ be any geodesic loop in $S$ such that $\gamma$ and $\mathcal{T}_g$ have nonempty intersection.  The existence of such a loop is guaranteed by Lemma~\ref{lem:ExistenceOfLoop}.
Replacing $\gamma$ with $\gamma^{-1}$ if necessary, we may assume that $w_\gamma \ge 1$. 
Let $\kappa$ be the maximum of lengths of $\gamma_i$ with respect to the metric $d_S$.

Let $\mu$, $L$ and $C$ be the constants given by Lemma~\ref{lem:LoopsAreQuasigeodesic}.
Let $\bigl \{t(j) \bigr \}$ be the sequence of integers given by Lemma~\ref{lem:ConstructSequence}. For each $n$, let $\sigma_n$ be the spiral loop coresponding to the curve $\gamma$ and the sequence $\bigl \{t(j) \bigr \}$. Let $\rho_n$ be the double spiral of $\sigma_n$. Let $\mathcal{L}$ be the family of lines that are lifts of loops of $\mathcal{T}_g$ or boundary loops of $S$. Since $\tilde{\rho}_n$ satisfies the hypotheses of Lemma~\ref{lem:LoopsAreQuasigeodesic}, it is an $(L,C)$--quasigeodesic in $\tilde S$.

Let $h_n$ be the homotopy class of loop $\rho_n$ at the basepoint $s_0$. We claim that $d_{\tilde{S}}\bigl (\tilde{s}_0,h_{n}(\tilde{s}_0)\bigr ) \succeq n^2 + w_\gamma^n$.
Indeed, let $r$ be the minimum of lengths of $c_i$ with respect to the metric $d_S$. By the construction of the spiral loop $\sigma_n$ we have $\bigabs{\tilde{\sigma}_n} \geq r\sum_{j=1}^{nm} \bigabs{t(j)}$. By Lemma~\ref{lem:ConstructSequence}(\ref{item:sequence:growing}), we have $\bigabs{\tilde{\sigma}_n} \succeq n^2 + w_\gamma^n$. It is obvious that $\abs{\tilde{\rho}_n} \succeq n^2 + w_\gamma^n$ because $\abs{\tilde{\rho}_n}\ge {2\abs{\tilde{\sigma}_n}}$. Since $\tilde{\rho}_n$ is an $(L,C)$--quasigeodesic, it follows that $d_{\tilde{S}}\bigl (\tilde{s}_0,h_{n}(\tilde{s}_0)\bigr ) \succeq n^2 + w_\gamma^n$.

Furthermore, we have $d\bigl (\tilde{x}_0,h_{n}(\tilde{x}_0)\bigr ) \preceq n$ by Lemma~\ref{lem:LinearInSurface}.  
Therefore, $n^2 + w_{\gamma}^{n} \preceq{\Delta_{H}^{G}}$ by Corollary~\ref{cor:GeometricDistortion}. It follows that $\Delta_H^G$ is at least quadratic.
If the horizontal surface is not virtually embedded, then we may choose the geodesic loop $\gamma$ in $S$ such that $w_{\gamma}>1$ by Theorem~\ref{thm:RubinsteinWang}.
In this case $w_\gamma$ is an exponential function, and $w_{\gamma}^{n} \preceq{\Delta_{H}^{G}}$.
\end{proof}

\section{Upper Bound of Distortion}
\label{sec:UpperBound}

In this section, we will find the upper bound of the distortion of horizontal surface. The main theorem in this section is the following.
\begin{thm}
\label{thm:UpperBound}
Let $g \colon (S,s_0)\looparrowright (N,x_0)$ be a horizontal surface in a graph manifold $N$. Let $G = \pi_1(N,x_0)$ and $H = g_{*}\bigl(\pi_1(S,s_0) \bigr)$. Then the distortion $\Delta_{H}^{G}$ is at most exponential. Furthermore, if the horizontal surface $g$ is virtually embedded then $\Delta_{H}^{G}$ is at most quadratic.
\end{thm}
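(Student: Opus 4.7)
The plan is to reduce to a simple graph manifold and then, for each $h\in H$ with $d_{\tilde N}(\tilde s_0,h\tilde s_0)\le n$, construct an explicit path in $\tilde S$ from $\tilde s_0$ to $h\tilde s_0$ of controlled length. By Theorem~\ref{thm:KLsimple}, Corollary~\ref{cor:ScottFiniteCover} and Proposition~\ref{prop:distortion} we may assume $N$ is a simple graph manifold, equipped with the metric of Remark~\ref{rem:MetricOnGraphManifold}; after homotoping $g$ as in Remark~\ref{rem:straightline in torus}, the lifts of curves of $\mathcal T_g$ are straight lines in JSJ planes. By Corollary~\ref{cor:GeometricDistortion} it suffices to bound $d_{\tilde S}(\tilde s_0,h\tilde s_0)$ as a function of $n$.

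Fix a shortest geodesic $\alpha$ in $\tilde N$ of length at most $n$ between the two endpoints. By Remark~\ref{rem:constants:setting up}(\ref{item:lowerbound_JSJplanes}) distinct JSJ planes are at distance at least $\rho$, so $\alpha$ crosses at most $k=O(n)$ of them, yielding a decomposition $\alpha=\alpha_0\alpha_1\cdots\alpha_k$ with each $\alpha_i$ contained in a Seifert component $\tilde M_i$, where $\tilde T_i$ is the JSJ plane separating $\tilde M_{i-1}$ from $\tilde M_i$. In each $\tilde T_i$, the intersection $\tilde S\cap\tilde T_i$ is a discrete family of parallel straight lines. Write $p_i=\alpha\cap\tilde T_i$. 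The plan is to inductively choose points $q_i$ on some line of $\tilde S\cap\tilde T_i$ close to $p_i$, and build a path in $\tilde S$ as a concatenation $\beta_0\beta_1\cdots\beta_k$ in which each $\beta_i$ lies within a single sheet of $\tilde S\cap\tilde M_i$ joining $q_i$ to $q_{i+1}$; the endpoints will satisfy $q_0=\tilde s_0$ and $q_{k+1}=h\tilde s_0$.

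The essential estimate controls the \emph{fiber drift} $D_i=d_{\tilde T_i}(p_i,q_i)$ inductively. The segment $\alpha_{i-1}$ translates the fiber coordinate in $\tilde M_{i-1}$ by at most $|\alpha_{i-1}|$; when we then seek $q_i$ on a line of $\tilde S\cap\tilde T_i$, the change of basis between the two fiber directions inside $\tilde T_i$ introduces the multiplicative factor $|b_i/a_i|$ coming from the homology identity of Definition~\ref{defn:DilationSlopes}, together with an additive constant $C'$ absorbing the (uniformly bounded) spacing of consecutive lines of $\tilde S\cap\tilde T_i$. This gives a recursion of the form
\[
   D_i \le |b_i/a_i|\,\bigl(D_{i-1}+|\alpha_{i-1}|\bigr)+C'.
\]
Each $\beta_i$ lies on a single sheet of $\tilde S\cap\tilde M_i$, which is a bilipschitz graph over the hyperbolic base of $M_i$ (since the corresponding block is a finite cover of the base, as in Remark~\ref{rem:SurfaceAnnulus}), so $|\beta_i|$ is bounded by a universal multiple of $|\alpha_i|+D_i+D_{i+1}$.

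Iterating the recursion and invoking the governor $\epsilon$ of Remark~\ref{rem:constants:setting up}(2) gives $D_i\le C\,\epsilon^i(n+1)$, yielding total path length of order $\epsilon^{O(n)}$ and hence at most exponential distortion. In the virtually embedded case, Proposition~\ref{prop:upperlambda} replaces the iterated slope product by the uniform constant $\Lambda$, so $D_i=O(n)$; summing over $k=O(n)$ indices yields a total length $O(n^2)$, the claimed quadratic bound. The principal obstacle is establishing the recursion precisely---in particular, verifying that the distance in $\tilde T_i$ between $p_i$ and the nearest line of $\tilde S\cap\tilde T_i$ is genuinely governed by the claimed affine function of $D_{i-1}$ and $|\alpha_{i-1}|$, with the slope factor $|b_i/a_i|$ emerging cleanly from the affine identification of $\overleftarrow{E_i}$ with $\overrightarrow{E_i}$. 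This is an elementary Euclidean computation in each $\tilde T_i$, but keeping track of the compounding across the full chain of JSJ crossings is what makes the virtually-embedded-versus-not dichotomy surface as quadratic-versus-exponential.
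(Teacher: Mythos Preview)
Your outline follows essentially the same route as the paper: reduce to a simple graph manifold, cut a geodesic in $\tilde N$ along JSJ planes, track a ``fiber drift'' quantity across the chain of Seifert pieces by a recursion with multiplicative factor $\bigabs{b_i/a_i}$, and then invoke either the governor $\epsilon$ or the uniform bound $\Lambda$ of Proposition~\ref{prop:upperlambda} to get the exponential or quadratic conclusion.

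There is, however, one genuine gap. You assert that $\tilde g(\tilde S)\cap\tilde T_i$ is a ``discrete family of parallel straight lines'' and plan to pick the nearest one, introducing the additive constant $C'$ to absorb the spacing. The problem is not the $C'$; it is the coherence of the choices. If at $\tilde T_i$ you pick the nearest line and at $\tilde T_{i+1}$ you again pick the nearest line, there is no reason these two lines bound the \emph{same} block of $\tilde S$ inside $\tilde M_i$, so you cannot in general connect $q_i$ to $q_{i+1}$ by a path $\beta_i$ lying in a single sheet of $\tilde g(\tilde S)\cap\tilde M_i$. Your construction of the path in $\tilde S$ breaks down exactly here.

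What fixes this---and what the paper proves as Proposition~\ref{prop:TreeBijective} and Corollary~\ref{cor: fiber intersection one point}---is that the induced map of Bass--Serre trees $\zeta\colon\mathbf T_S\to\mathbf T_N$ is a \emph{bijection}. Equivalently, each fiber of $\tilde N$ meets $\tilde g(\tilde S)$ in exactly one point, so $\tilde g(\tilde S)\cap\tilde T_i$ is a \emph{single} line, not a family. Once you have this, there is no ``nearest line'' to choose and no $C'$: the point $q_i$ is forced, and consecutive $q_i,q_{i+1}$ automatically lie on the unique block of $\tilde S$ over $\tilde M_i$. The paper then splits your single quantity $D_i$ into two---the distances from $p_i$ to the line along the two adjacent fiber directions---and obtains the recursion in two clean steps (Lemmas~\ref{lem:Inequality1} and~\ref{lem: application rule of sines}), with the slope factor appearing as an exact equality rather than an inequality with slack. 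With that correction your argument goes through exactly as you describe.
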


\begin{defn}
\label{defn:mapoftrees}
Lift the JSJ decomposition of the simple graph manifold $N$ to the universal cover $\tilde N$, and let $\mathbf T_N$ be the tree dual to this decomposition of $\tilde N$.
Lift the collection $\mathcal T_g$ to the universal cover $\tilde S$.  The tree dual to this decomposition of $\tilde S$ will be denoted by $\mathbf T_S$. The map $\tilde g$ induces a map $\zeta \colon{\mathbf{T}_S} \to \mathbf{T}_N$.
\end{defn}

The following lemma plays an important role in the proof of Theorem~\ref{thm:UpperBound}.

\begin{lem} 
\label{lem:UniqueIntersection}
Let $F$ be a connected compact surface with non-empty boundary and $\chi(F) <0$. Let $M = F \times S^1$.
Let $g \colon (B,b) \looparrowright (M,x)$ be a horizontal surface.
Then each fiber in $\tilde M$ intersects with $\tilde g(\tilde B)$ exactly at one point.
\end{lem}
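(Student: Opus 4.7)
The plan is to reduce the lemma to standard covering-space theory by exploiting that horizontality forces the composition $p = \pi_F \circ g \colon B \to F$ with the bundle projection $\pi_F \colon M \to F$ to be a finite covering map. Since $g$ is a properly immersed surface transverse to the fibration, at every point $\pi_F \circ g$ is a local homeomorphism; properness together with compactness of $B$ and $F$ upgrade this to a finite-sheeted covering. (This is the same observation used in Remark~\ref{rem:SurfaceAnnulus}, now applied in the one-block setting of a single Seifert piece.)

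Next I would pass to universal covers. Since $F$ is a compact surface with non-empty boundary and $\chi(F)<0$, it is a $K(\pi,1)$ with contractible universal cover $\tilde F$, so $\tilde M = \tilde F \times \R$ with lifted bundle projection $\tilde\pi_F \colon \tilde M \to \tilde F$ whose fibers are precisely the lines $\{\tilde f\}\times\R$. Using that horizontal surfaces are $\pi_1$-injective with embedded universal lift (the Rubinstein--Wang result invoked in Definition~\ref{defn:Blocks}), we obtain an embedding $\tilde g \colon \tilde B \hookrightarrow \tilde M$.

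The crux of the proof is to show that the lifted composition $\tilde p = \tilde\pi_F \circ \tilde g \colon \tilde B \to \tilde F$ is a homeomorphism. This follows from the universal property of universal covers: $\tilde p$ is a lift of the covering map $p$ through the universal cover $\tilde F \to F$, hence is itself a covering map of $\tilde F$; but its domain $\tilde B$ is simply connected, so $\tilde p$ must be a one-sheeted covering, i.e., a homeomorphism.

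With that in hand the conclusion is immediate. For any fiber $\{\tilde f\}\times\R$ in $\tilde M$, one has
\[
   \tilde g(\tilde B)\cap \bigl(\{\tilde f\}\times \R\bigr)
    = \tilde g\bigl(\tilde p^{-1}(\tilde f)\bigr),
\]
which is a single point because $\tilde p$ is a bijection and $\tilde g$ is injective. The only step requiring any care is the identification of $p$ as a covering map, but since horizontality is precisely the transversality condition that makes $\pi_F \circ g$ a local homeomorphism, I do not expect this to be a genuine obstacle.
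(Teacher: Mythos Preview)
Your argument is correct, and it is in fact more streamlined than the paper's own proof. The paper invokes \cite[Lemma~2.1]{RW98} to factor $g$ as $B \xrightarrow{i} B\times S^1 \xrightarrow{p} M$, with $i(x)=(x,1)$ and $p$ a finite cover; it then compares the two models $\tilde B\times\R$ and $\tilde F\times\R$ of $\tilde M$ via a fiber-preserving homeomorphism $\omega$ and reads off the conclusion from $\tilde g(\tilde B)=\omega(\tilde B\times\{0\})$. You bypass this factorization entirely: once $\pi_F\circ g\colon B\to F$ is a finite covering (which is exactly the content of Remark~\ref{rem:SurfaceAnnulus}), the composite $\tilde B\to B\to F$ is a covering from a simply connected space, and its lift $\tilde p\colon\tilde B\to\tilde F$ through the universal cover $\tilde F\to F$ is therefore a homeomorphism. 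Both proofs ultimately rest on the same fact (horizontality $\Rightarrow$ $\pi_F\circ g$ is a covering), but yours avoids the external citation and the auxiliary space $B\times S^1$.

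Two minor remarks. First, you do not actually need to cite the Rubinstein--Wang embedding result: the existence of the lift $\tilde g$ only requires $\pi_1$-injectivity of $g$, which already follows from $\pi_F\circ g$ being a covering map (and the inclusion $\pi_1(F)\hookrightarrow\pi_1(M)$); injectivity of $\tilde g$ is then a consequence of injectivity of $\tilde p=\tilde\pi_F\circ\tilde g$, not a hypothesis. Second, the sentence ``its domain $\tilde B$ is simply connected, so $\tilde p$ must be a one-sheeted covering'' is slightly misphrased: what forces a connected covering to be one-sheeted is simple connectivity of the \emph{target} $\tilde F$. Of course both $\tilde B$ and $\tilde F$ are simply connected here, so the conclusion stands, but you should adjust the justification.
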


\begin{proof}
According to Lemma~2.1 in \cite{RW98}, there exists a finite covering map $p \colon{B \times S^1} \to M$ and an embedding $i \colon{B} \to B \times S^1$ given by $i(x) = (x,1)$ such that $g = p \circ i$. Let $\tilde{i} \colon{\tilde{B}} \to \tilde{B} \times \R$ be the lift of $i$ such that $\tilde{i}(\tilde{b}) = (\tilde{b},0)$. Let $\phi \colon \tilde{B} \to B$ and $\Psi \colon \tilde{F} \to F$ be the universal covering maps. Let $\pi \colon \R \to S^1$ be the usual covering space. Since $\tilde{B} \times \R$ and $\tilde{F} \times \R$ both universal cover $M$, there exists a homeomorphism $\omega \colon \bigl (\tilde{B} \times \R,(\tilde{b},0) \bigr ) \to \bigl (\tilde{F} \times \R,\tilde{g}(\tilde{b}) \bigr )$ such that $(\Psi \times \pi) \circ \omega = p \circ (\phi \times \pi)$. By the unique lifting property, we have $\omega \circ \tilde{i} = \tilde{g}$. It follows that $\tilde{g}(\tilde{B}) = \omega \bigl(\tilde{B} \times \{0\} \bigr)$. Since $\omega$ maps each fiber in $\tilde{B} \times \R$ to a fiber in $\tilde{F} \times \R$. It follows that each fiber in $\tilde{F} \times \R$ intersects $\omega \bigl(\tilde{B} \times \{0\}\bigr)$ exactly at one point.
\end{proof}

\begin{prop}
\label{prop:TreeBijective}
The map $\zeta$ is bijective.
\end{prop}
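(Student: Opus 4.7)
The plan is to prove that $\zeta$ is a simplicial isomorphism by first checking that it restricts to a bijection on the star of every vertex, and then invoking the fact that any simplicial map from a connected graph to a tree which is a local bijection on vertex stars must itself be a bijection.

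The geometric heart of the argument uses Lemma~\ref{lem:UniqueIntersection}. For each block $\tilde{B}$ of $\tilde{S}$, let $\tilde{M} = \zeta(\tilde{B})$ be the Seifert component of $\tilde{N}$ containing $\tilde{g}(\tilde{B})$. Because $N$ is a simple graph manifold, $M = F \times S^1$ and $\tilde{M} = \tilde{F} \times \R$ are trivial bundles. Lemma~\ref{lem:UniqueIntersection} says every fiber of $\tilde{M}$ meets $\tilde{g}(\tilde{B})$ in exactly one point, so $\tilde{g}(\tilde{B})$ is the image of a section of the projection $\tilde{M} \to \tilde{F}$, and composition with $\tilde{g}$ yields a homeomorphism $\tilde{B} \to \tilde{F}$ (which in fact descends to a finite covering $B \to F$).

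Using this, I would match the edges of $\mathbf{T}_S$ incident to $\tilde{B}$ with the edges of $\mathbf{T}_N$ incident to $\tilde{M}$. An edge of $\mathbf{T}_S$ at $\tilde{B}$ is a boundary line of $\tilde{B}$ lying in a lift of some $c \in \mathcal{T}_g$; under the identification $\tilde{B} \cong \tilde{F}$ it becomes a boundary line $\tilde{\ell}$ of $\tilde{F}$ coming from a boundary circle of $F$ in a JSJ torus of $N$. On the other side, an edge of $\mathbf{T}_N$ at $\tilde{M}$ is a JSJ plane bounding $\tilde{M}$ inside $\tilde{N}$, and the product structure $\tilde{M} = \tilde{F} \times \R$ forces each such plane to have the form $\tilde{\ell} \times \R$ for a unique boundary line $\tilde{\ell}$ of $\tilde{F}$ of the appropriate type. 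The map $\zeta$ sends a boundary line of $\tilde{B}$ to the JSJ plane containing its image under $\tilde{g}$, and under these identifications it is the evident correspondence $\tilde{\ell} \longleftrightarrow \tilde{\ell} \times \R$, which is visibly a bijection.

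With star-bijectivity established, $\zeta$ is a combinatorial covering map; since $\mathbf{T}_N$ is simply connected while $\mathbf{T}_S$ is connected, any such covering must be a homeomorphism. The step I expect to require the most care is the verification that every JSJ plane bounding $\tilde{M}$ is hit by a boundary line of $\tilde{g}(\tilde{B})$, and that distinct boundary lines give distinct planes; this is where the simpleness of $N$ is essential, because the product decomposition is precisely what lets us read off every boundary plane of $\tilde{M}$ directly from the boundary lines of $\tilde{F}$ via the section.
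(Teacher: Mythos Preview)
Your proposal is correct and follows essentially the same strategy as the paper: establish that $\zeta$ is locally bijective on vertex stars and then invoke the standard fact that a locally bijective simplicial map of trees is a bijection. Both arguments hinge on Lemma~\ref{lem:UniqueIntersection}.

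The only real difference is organizational. You first package Lemma~\ref{lem:UniqueIntersection} as the statement that the projection $\tilde{B}\to\tilde{F}$ is a homeomorphism, and then read off the star bijection from the correspondence between boundary lines of $\tilde{F}$ and JSJ planes $\tilde{\ell}\times\R$. The paper instead proves local injectivity and local surjectivity separately: for injectivity it argues by contradiction that if two edges at a vertex $\tilde{B}_2$ mapped to the same JSJ plane $\tilde{T}$, then some fiber in $\tilde{T}$ would meet $\tilde{g}(\tilde{B}_2)$ in two distinct points (using Remark~\ref{rem:nonempty intersection}), violating Lemma~\ref{lem:UniqueIntersection}; for surjectivity it observes directly that $\tilde{g}(\tilde{B})$ meets every fiber and hence every adjacent JSJ plane. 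Your section-based viewpoint is slightly more conceptual and handles both directions at once; the paper's version is more hands-on but avoids having to sort out which boundary lines of $\tilde{F}$ correspond to JSJ planes versus boundary tori of $N$ (the point you flag in your last paragraph). Either way the content is the same.
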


\begin{proof}
A simplicial map between trees is bijective if is locally bijective.
Thus it suffices to show that the map $\zeta$ is locally injective and locally surjective (see \cite{Stallings83} for details).

Suppose by way of contradiction that $\zeta$ is not locally injective.
Then there exist three distinct blocks $\tilde{B}_1$, $\tilde{B}_2$ and $\tilde{B}_3$ in $\tilde{S}$ such that $\tilde{B}_1 \cap \tilde{B}_2 \neq \emptyset$ and $\tilde{B}_2 \cap \tilde{B}_3 \neq \emptyset$ such that the images $\tilde{g}(\tilde{B}_1)$ and $\tilde{g}(\tilde{B}_3)$ lie in the same block $\tilde{M}_1$ of $\tilde{M}$.  Let $\tilde{M}_2$ denote the block containing the image $\tilde{g}(\tilde{B}_2)$.
We denote $\ell_1 = \tilde{B}_1 \cap \tilde{B}_2$ and $\ell_3 = \tilde{B}_2 \cap \tilde{B}_3$.
We have $\tilde{g}(\ell_1)$ and $\tilde{g}(\ell_3)$ are subsets of the JSJ plane $\tilde{T} = \tilde{M}_1 \cap \tilde{M}_2$.
Since the lines $\ell_1$ and $\ell_3$ are disjoint and the map $\tilde{g}$ is an embedding, it follows that $\tilde{g}(\ell_1)$ and $\tilde{g}(\ell_3)$ are disjoint lines in the plane $\tilde{T}$.
By Remark~\ref{rem:nonempty intersection}, any fiber in the plane $\tilde{T}$ intersects $\tilde{g}(\ell_1)$ and $\tilde{g}(\ell_3)$ at distinct points. This contradicts with Lemma~\ref{lem:UniqueIntersection} because $\tilde{g}(\ell_1)$ and $\tilde{g}(\ell_3)$ are subsets of $\tilde{g}(\tilde{B}_2)$. Therefore, $\zeta$ is locally injective.

By Lemma~\ref{lem:UniqueIntersection}, the block $\tilde{g}(\tilde{B})$ must intersect every fiber of the Seifert component $\tilde{M}$ containing it.  In particular, $\tilde{g}(\tilde{B})$ intersects every JSJ plane adjacent to $\tilde{M}$.  Therefore the map $\zeta$ is locally surjective.
\end{proof}

The following corollary is a combination of Lemma~\ref{lem:UniqueIntersection} and Proposition~\ref{prop:TreeBijective}.

\begin{cor}
\label{cor: fiber intersection one point}
Each fiber of $\tilde{N}$ intersects with $\tilde{g}(\tilde{S})$ in one point.
\end{cor}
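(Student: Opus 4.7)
The plan is to combine the two preceding results directly. Any fiber $\phi$ of $\tilde N$ is a fiber of some unique Seifert component $\tilde M$ of $\tilde N$. By Proposition~\ref{prop:TreeBijective}, there is a unique block $\tilde B$ of $\tilde S$ whose image under $\tilde g$ lies in $\tilde M$, and Lemma~\ref{lem:UniqueIntersection} (applied after projecting down to the quotient Seifert component $M$ containing $\tilde g(\tilde B)$) immediately gives that $\phi$ meets $\tilde g(\tilde B)$ in exactly one point $p$. So existence of the intersection point is free; the only work is to rule out additional intersections with images of other blocks.

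For this, I would consider any block $\tilde B' \ne \tilde B$ of $\tilde S$. By Proposition~\ref{prop:TreeBijective}, $\tilde g(\tilde B')$ lies in a Seifert component $\tilde M' \ne \tilde M$, hence $\tilde g(\tilde B') \cap \tilde M \subseteq \tilde M \cap \tilde M'$. If this is nonempty, then $\tilde M \cap \tilde M' = \tilde T$ is a shared JSJ plane, and the corresponding edge of $\mathbf{T}_N$ pulls back under the bijection $\zeta$ of Proposition~\ref{prop:TreeBijective} to an edge of $\mathbf{T}_S$ incident to the vertex $\tilde B$ (since $\zeta$ restricts to a bijection on the edges at $\tilde B$ and the edges at $\tilde M$). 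That edge is a common boundary circle $\ell = \tilde B \cap \tilde B'$, which forces $\tilde g(\tilde B') \cap \tilde T = \tilde g(\ell) \subseteq \tilde g(\tilde B)$. Consequently $\phi \cap \tilde g(\tilde B') \subseteq \phi \cap \tilde g(\tilde B) = \{p\}$, so no new intersection point is introduced.

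Putting the two paragraphs together, every point of $\phi \cap \tilde g(\tilde S)$ equals $p$, which gives the corollary. The only mildly delicate case will be when $\phi$ is itself a boundary fiber, lying inside a JSJ plane of $\tilde M$: I must treat $\tilde M$ as a closed subset when invoking Lemma~\ref{lem:UniqueIntersection}, and remember that such a $\phi$ is a fiber only of $\tilde M$ and not of the adjacent component $\tilde M'$ (since in a simple graph manifold the fiber directions of adjacent components have intersection number one). Once this is handled, the tree-bijection argument above applies verbatim to any neighbouring block and the proof closes.
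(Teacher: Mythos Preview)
Your proposal is correct and follows the same approach as the paper, which merely states that the corollary ``is a combination of Lemma~\ref{lem:UniqueIntersection} and Proposition~\ref{prop:TreeBijective}'' and gives no further details. You have supplied precisely the argument the paper leaves implicit, including the observation that $\zeta$ being a simplicial bijection forces any block $\tilde B'$ with $\tilde g(\tilde B')\cap\tilde M\ne\emptyset$ to be adjacent to $\tilde B$, so that any extra intersection already lies in $\tilde g(\tilde B)$.
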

\begin{rem}
\label{rem:homomorphism:upper}
Let $G$ and $H$ be finitely generated groups with generating sets $\mathcal{A}$ and $\mathcal{B}$ respectively. Let $\phi \colon G \to H$ be a homomorphism. Then there exists a positive number $L$ such that $\bigabs{\phi(g)}_{\mathcal{B}} \le L\abs{g}_{\mathcal{A}}$ for all $g$ in $G$. Indeed, suppose that $\mathcal{A} = \{g_1,g_2,\dots ,g_n \}$ we define $L = \max \bigset{\abs{\phi(g_{i})}_{\mathcal{B}}}{i = 1,2,\dots,n}$. Since $\phi$ is a homomorphism, it is not hard to see that $\bigabs{\phi(g)}_{\mathcal{B}} \le L\abs{g}_{\mathcal{A}}$ for all $g \in G$.
\end{rem}

The following proposition shows that the distortion of a horizontal surface in a trivial Seifert manifold is linear.
\begin{prop}
\label{prop:linear distortion}
Let $F$ be a connected compact surface with non-empty boundary and $\chi(F)<0$.  Let $g \colon (B,b) \looparrowright (M,x)$ be a horizontal surface where $M = F \times S^1$. Let $H = g_{*} \bigl(\pi_1(B,b) \bigr)$ and $G = \pi_1(M,x)$. Then $H \hookrightarrow G$ is a quasi-isometric embedding.
\end{prop}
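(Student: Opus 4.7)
The plan is to reduce to the case of a trivial product via Lemma~2.1 of Rubinstein--Wang (which is already cited in the proof of Lemma~\ref{lem:UniqueIntersection}), and then to observe that a direct product factor is always undistorted.

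First I would invoke the factorization $g = p \circ i$, where $p \colon B \times S^1 \to M$ is a finite covering and $i \colon B \to B \times S^1$ is the embedding $i(x) = (x,1)$. Then $p_{*} \bigl( \pi_1(B \times S^1) \bigr)$ is a finite index subgroup of $G = \pi_1(M,x)$, and $H = g_{*} \bigl( \pi_1(B,b) \bigr)$ is contained in this subgroup, so that by Proposition~\ref{prop:distortion}(1) it suffices to prove
\[
   \Delta_{H}^{p_{*}(\pi_1(B \times S^1))}(n) \preceq n.
\]
Under the identification $p_{*} \bigl( \pi_1(B \times S^1) \bigr) \cong \pi_1(B) \times \Z$, the subgroup $H$ corresponds to the factor $\pi_1(B) \times \{0\}$ via $i_{*}$.

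The heart of the matter is then the elementary observation that, for any finitely generated group $K$, the first factor $K \times \{0\}$ is undistorted in $K \times \Z$. This is immediate from the definition: if one chooses a generating set $\mathcal{A}$ for $K$ and takes $\mathcal{A} \cup \{t\}$ as a generating set for $K \times \Z$ (where $t$ generates the $\Z$ factor), then for any $k \in K$ one has $\abs{(k,0)}_{\mathcal{A}\cup\{t\}} = \abs{k}_{\mathcal{A}}$, since any word in $K \times \Z$ representing $(k,0)$ projects to a word in $K$ of length at most that of the original. Hence $H$ is undistorted in $p_{*} \bigl( \pi_1(B \times S^1) \bigr)$, and therefore in $G$.

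No step is really a serious obstacle here — the entire proof is a two-line reduction once one has the Rubinstein--Wang splitting lemma and the basic fact about distortion in finite index extensions (Proposition~\ref{prop:distortion}). The only point that deserves care is the implicit claim that $p_{*}$ is injective on $\pi_1(B \times S^1)$, which holds because $p$ is a covering map; and the fact that $i_{*}$ realizes $H$ as the first factor of $\pi_1(B) \times \Z$, which is simply the statement that $i$ sends $B$ to $B \times \{1\}$. Finally, a quasi-isometric embedding of finitely generated groups is the same thing as an undistorted inclusion, so the conclusion follows.
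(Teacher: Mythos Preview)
Your argument is correct. Both proofs ultimately exploit the same phenomenon---that the horizontal surface becomes a slice of a product after passing to a finite cover---but they organize the reduction differently. The paper writes $g=(g_1,g_2)$ with $g_1\colon B\to F$ a finite cover of base surfaces, so that $g_{1*}$ is a quasi-isometry onto its finite-index image in $\pi_1(F)$; the lower bound $\bigabs{g_*(h)}\ge\bigabs{g_{1*}(h)}$ then comes from projecting $\pi_1(F)\times\Z\to\pi_1(F)$. You instead invoke the Rubinstein--Wang factorization $g=p\circ i$ to replace $G$ by the finite-index subgroup $p_*\bigl(\pi_1(B\times S^1)\bigr)\cong\pi_1(B)\times\Z$, and then project $\pi_1(B)\times\Z\to\pi_1(B)$. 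Your route is slightly more conceptual (``a direct factor is undistorted, and distortion is unchanged in finite index''), at the cost of importing the Rubinstein--Wang lemma; the paper's route is more self-contained and avoids that citation. Neither approach has any real advantage over the other for this proposition.
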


\begin{proof}
We first choose generating sets for $\pi_1(B)$, $\pi_1(F)$ and $\pi_1(S^1)$. The generating sets of $\pi_1(F)$ and $\pi_1(S^1)$ induce a generating set on $\pi_1(M)$.
Let $g_1 \colon B \to F$ and $g_2 \colon B \to S^1$ be the maps such that $g=(g_1,g_2)$. We have $g_1 \colon B \to F$ is a finite covering map because $g$ is a horizontal surface in $M$. It follows that $g_{1*}\bigl (\pi_1(B)\bigr )$ is a finite index subgroup of $\pi_1(F)$. As a result, $g_{1*}$ is an $(L',0)$--quasi-isometry for some constant $L'$. Since $g_{*} = (g_{1*},g_{2*})$ we have $\bigabs{g_{*}(h)} \ge \bigabs{g_{1*}(h)} \ge \abs{h}\big/L'$ for all $h \in \pi_1(B)$.
Applying Remark~\ref{rem:homomorphism:upper} to the homomorphism $g_{*}$, the constant $L'$ can be enlarged so that we can show that $g_{*}$ is an $(L',0)$--quasi--isometric embedding. \end{proof}

For the rest of this section, we fix $g \colon (S,s_0) \looparrowright (N,x_0)$ a horizontal surface in a simple graph manifold $N$, the metric $d$ which given by Remark~\ref{rem:MetricOnGraphManifold}, and the hyperbolic metric $d_S$ on $S$ which described in Section~\ref{sec:LowerBound}. By Remark~\ref{rem:straightline in torus}, we also assume that for each curve $c$ in $\mathcal{T}_g$ then $g(c)$ is a straight in the JSJ torus $T$ where $T$ is the JSJ torus such that $g(c) \subset T$. We define a metric $d_{\tilde{g}(\tilde{S})}$ on $\tilde{g}(\tilde{S})$ as the following: for any $u = \tilde{g}(x)$ and $v = \tilde{g}(y)$, we define $d_{\tilde{g}(\tilde{S})}(u,v) = d_{\tilde{S}}(x,y)$.
The following corollary follows by combining Proposition~\ref{prop:linear distortion} with several earlier results, using the fact that $S$ has only finitely many blocks, and $N$ has only finitely many Seifert components.

\begin{cor}
\label{cor:uniform constants}
There exist numbers $L$ and $C$ such that the following holds: For each block $\tilde{B}$ in $\tilde{S}$, let $\tilde{M} = \tilde{F} \times \R$ be the Seifert component of $\tilde{N}$ such that $\tilde{g}(\tilde{B}) \subset \tilde{M}$.
The map $\tilde{g}|_{\tilde{B}} \colon \tilde{B} \to \tilde{M} = \tilde{F} \times \R$ can be expressed as a pair of maps
$\tilde{g}_1 \colon  \tilde{B} \to \tilde{F}$ and $\tilde{g}_2 \colon \tilde{B} \to \R$. Then $\tilde{g}_1$ and $\tilde{g}|_{\tilde B}$ are $(L,C)$--quasi-isometric embeddings, and  
\[
    \bigabs{\tilde{g}_{2}(u)-\tilde{g}_{2}(v)} \le L\,d_{M}\bigl (\tilde{g}_{1}(u),\tilde{g}_{1}(v) \bigr ) + C
    \]
    for all $u,v \in \tilde{B}$.
\end{cor}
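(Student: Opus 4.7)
The plan is to apply Proposition~\ref{prop:linear distortion} inside each individual block and then upgrade the resulting constants to uniform ones using cocompactness. First I would fix a block $\tilde{B} \subset \tilde{S}$ whose image lies in a Seifert component $\tilde{M} = \tilde{F} \times \R$. By Remark~\ref{rem:SurfaceAnnulus}, the block $B$ is a compact connected surface with nonempty boundary and $\chi(B)<0$, and $g|_{B} \colon B \looparrowright M = F \times S^1$ is a horizontal surface in the sense of Proposition~\ref{prop:linear distortion}. Applying that proposition gives that the induced map $g_{*} \colon \pi_1(B) \to \pi_1(M)$ is a quasi-isometric embedding, which lifts to a quasi-isometric embedding $\tilde{g}|_{\tilde{B}} \colon \tilde{B} \to \tilde{M}$. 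The proof of Proposition~\ref{prop:linear distortion} also shows that $g_{1*} \colon \pi_1(B) \to \pi_1(F)$ is a quasi-isometry (its image has finite index, because $g_1$ is a finite cover), so that $\tilde{g}_1 \colon \tilde{B} \to \tilde{F}$ is a quasi-isometric embedding as well.

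Next I would derive the inequality for the $\R$-coordinate. Since $g_2 \colon B \to S^1$ is a continuous map between compact spaces, it is Lipschitz with some constant $L_1$, and its lift $\tilde{g}_2 \colon \tilde{B} \to \R$ is then $L_1$-Lipschitz with respect to $d_{\tilde{S}}$. Because $\tilde{g}_1$ is a quasi-isometric embedding, $d_{\tilde{S}}(u,v)$ is bounded above by a linear function of $d_{\tilde{F}}\bigl(\tilde{g}_1(u),\tilde{g}_1(v)\bigr)$. Finally, within a single Seifert component, Remark~\ref{rem:MetricOnGraphManifold} gives a bilipschitz comparison between $d$ and the product metric on $\tilde{M}$, and the latter dominates $d_{\tilde{F}}$ on the first factor, yielding $d_{\tilde{F}}\bigl(\tilde{g}_1(u),\tilde{g}_1(v)\bigr) \le K\, d_M\bigl(\tilde{g}_1(u),\tilde{g}_1(v)\bigr)$. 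Composing these three estimates produces a bound of the form $\bigabs{\tilde{g}_2(u)-\tilde{g}_2(v)} \le L\,d_M\bigl(\tilde{g}_1(u),\tilde{g}_1(v)\bigr)+C$ for this fixed block.

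To make the constants uniform, I would invoke the cocompactness of the $\pi_1(S)$-action on $\tilde{S}$ permuting blocks, together with the cocompactness of the $\pi_1(N)$-action on $\tilde{N}$ permuting Seifert components. This yields only finitely many orbit types of pairs $(\tilde{B},\tilde{M})$. Because deck transformations act by isometries of all the metrics in play and commute with $\tilde{g}$ (up to a translation in the $\R$-factor of $\tilde{M}$, which does not affect differences $\tilde{g}_2(u)-\tilde{g}_2(v)$), the constants obtained for one representative of each orbit work for every block in that orbit. Taking the maximum of the finitely many resulting constants gives the universal $L$ and $C$ demanded by the statement.

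The main thing to watch is the careful juggling of the four metrics that appear ($d_{\tilde{S}}$, $d_{\tilde{F}}$, the product metric on each $\tilde{M}$, and the ambient metric $d$ on $\tilde{N}$), but each comparison is either a uniform bilipschitz estimate coming from Remark~\ref{rem:MetricOnGraphManifold} or a quasi-isometric estimate coming from Proposition~\ref{prop:linear distortion}. These compose linearly, so the real content is the cocompactness/finite-orbit-type argument that turns per-block constants into uniform ones; I do not anticipate any substantial obstacle beyond bookkeeping.
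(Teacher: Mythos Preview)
Your proposal is correct and follows essentially the same route as the paper: use Proposition~\ref{prop:linear distortion} and the fact that $g_1$ is a finite cover to get the two quasi-isometric embeddings, derive the $\tilde g_2$--inequality from these, and invoke finiteness of block types for uniform constants. One small slip worth fixing: continuity between compact spaces does not by itself force $g_2$ to be Lipschitz; it is cleaner to observe that $\bigl|\tilde g_2(u)-\tilde g_2(v)\bigr| \le d_M\bigl(\tilde g(u),\tilde g(v)\bigr)$ directly from the product metric and then feed in the quasi-isometric embedding bounds for $\tilde g|_{\tilde B}$ and $\tilde g_1$, exactly as the paper does.
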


\begin{proof}
The map $g|_{B} \colon B \to F \times S^1$ can be written as $(g_1,g_2)$. Since the map $g_1 \colon B \to F$ is a finite covering map, the lift $\tilde{g}_1$ is a quasi--isometry. It follows from Proposition~\ref{prop:linear distortion} that $\tilde{g}|_{\tilde{B}}$ is a quasi--isometric embedding. The facts $\tilde{g}_1$ and $\tilde{g}|_{\tilde{B}}$ are quasi--isometrically embedded imply the final claim.
\end{proof}

Now, we describe informally the strategy of the proof of Theorem~\ref{thm:UpperBound} in the case $N$ is a simple graph manifold. For each $n \in \N$, let $u \in \tilde{g}(\tilde{S})$ such that $d(\tilde{x}_0, u) \le n$. We would like to find an upper bound (either quadratic or exponential as appropriate) for $d_{\tilde{S}}(\tilde{x}_0,u)$ in terms of $n$. We first show the existence of a path $\xi$ in $\tilde{N}$ connecting $\tilde{x}_0$ to $u$ and passing through $k$ Seifert components $\tilde{M}_0, \dots ,\tilde{M}_{k-1}$ such that $\xi$ intersects the plane $\tilde{T}_i = \tilde{M}_{i-1} \cap \tilde{M}_{i}$ at exactly one point, denoted by $y_i$. We will show that $k$ is bounded above by a linear function of $n$.
Therefore it suffices to find an upper bound for $d_{\tilde{S}}(\tilde{x}_0,u)$ in terms of $k$.
Let $d_i$ be the given product metric on $\tilde{M}_i$. By Corollary~\ref{cor: fiber intersection one point} the fiber of $\tilde{M}_{i-1}$ passing through $y_i$ intersects $\tilde{g}(\tilde{S})$ in a unique point which is denoted by $x_i$. Similarly, the fiber of $\tilde{M}_{i}$ passing through $y_i$ intersects $\tilde{g}(\tilde{S})$ in one point which is denoted by $z_i$.

On the one hand, proving that the distance in $\tilde{g}(\tilde{S})$ between the endpoints of $\xi$ is dominated by the sum $\sum_{i=1}^{k-1} \bigl (d_{i-1}(y_i,x_i) + d_{i}(y_i,z_i) \bigr )$ is easy. On the other hand, finding a quadratic or exponential upper bound for this sum as a function of $k$ requires more work. Our strategy is to analyze the growth of the sequence of numbers
\begin{multline*}
   d_0(y_1,x_1), d_1(y_1,z_1), d_1(y_2,x_2), \dots,\\
   d_{i-2}(y_{i-1},x_{i-1}),
   d_{i-1}(y_{i-1},z_{i-1}),
   d_{i-1}(y_i,x_i), \dots,\\
   d_{k-2}(y_{k-1},x_{k-1}),d_{k-1}(y_{k-1},z_{k-1})
\end{multline*}
A relation between $d_{i-1}(y_{i-1},z_{i-1})$ and $d_{i-1}(y_i,x_i)$ in the Seifert component $\tilde{M}_{i-1}$ will be described in Lemma~\ref{lem:Inequality1}. The ratio of $d_{i-1}(y_{i-1},z_{i-1})$ to $d_{i-2}(y_{i-1},x_{i-1})$ in the JSJ plane $\tilde{T}_{i-1}$ will be described in Lemma~\ref{lem: application rule of sines}.

\begin{lem}[Crossing a Seifert component]
\label{lem:Inequality1}
There exists a positive constant $L'$ such that the following holds:
For each block $\tilde{B}$ in $\tilde{S}$, let $\tilde{M} = \tilde{F} \times \R$ be the Seifert component such that $\tilde{g}(\tilde{B}) \subset \tilde{M}$. Let $d_M$ be the given product metric on $\tilde{M}$, and let $\tilde{T}$ and $\tilde{T'}$ be two disjoint JSJ planes in the Seifert component $\tilde{M}$. For any two points $y \in \tilde T$ and $y' \in \tilde{T'}$, let $\ell \subset \tilde{T}$ and $\ell' \subset \tilde{T'}$ be the lines that project to the fiber $S^1$ in $M$ such that $y \in \ell$ and $y' \in \ell'$. Let $x$ \textup{(}resp.\ $x'$\textup{)} be the unique intersection point of $\ell$ \textup{(}resp.\ $\ell'$\textup{)} with $\tilde{g}(\tilde{B}) \cap \tilde{M}$ given by Lemma~\ref{lem:UniqueIntersection}. Then
\[
   d_M(y',x') \le d_M(y,x) + L'\, d_M(y,y')
\]
\end{lem}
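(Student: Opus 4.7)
The plan is to work in coordinates provided by the product structure $\tilde{M} = \tilde{F}\times\R$ and then invoke Corollary~\ref{cor:uniform constants} to control the $\R$--coordinate of $\tilde{g}(\tilde B)$ in terms of its $\tilde F$--coordinate. Since the fibers of $\tilde{M}$ are the vertical lines $\{a\}\times\R$, I would write $\ell=\{a\}\times\R$ and $\ell'=\{a'\}\times\R$ for some $a,a'\in\tilde F$, and then set
\[
   y=(a,s),\quad x=(a,t),\quad y'=(a',s'),\quad x'=(a',t').
\]
In these coordinates $d_M(y,x)=\abs{s-t}$, $d_M(y',x')=\abs{s'-t'}$, and $d_M(y,y')=\sqrt{d_{\tilde F}(a,a')^2+(s-s')^2}$, so immediately $d_{\tilde F}(a,a')\le d_M(y,y')$ and $\abs{s-s'}\le d_M(y,y')$.

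Next I would use that both $x$ and $x'$ lie in $\tilde{g}(\tilde B)$: write $x=\tilde{g}(u)$, $x'=\tilde{g}(v)$, so that $\tilde{g}_1(u)=a$, $\tilde{g}_1(v)=a'$, $\tilde{g}_2(u)=t$, $\tilde{g}_2(v)=t'$. Applying the inequality from Corollary~\ref{cor:uniform constants} gives
\[
   \abs{t-t'}=\bigabs{\tilde{g}_2(u)-\tilde{g}_2(v)}\le L\,d_{\tilde F}(a,a')+C\le L\,d_M(y,y')+C.
\]
A single application of the triangle inequality then yields
\[
   \abs{s'-t'}\le \abs{s-s'}+\abs{s-t}+\abs{t-t'}\le d_M(y,x)+(L+1)\,d_M(y,y')+C.
\]

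The only thing left is the extra additive constant $C$, which does not appear in the statement. Here I would absorb it using Remark~\ref{rem:constants:setting up}\,(\ref{item:lowerbound_JSJplanes}): the distinct JSJ planes $\tilde T$ and $\tilde T'$ satisfy $d(\tilde T,\tilde T')\ge\rho>0$, and since $d$ and $d_M$ are $K$--bilipschitz on the Seifert component $\tilde M$ by Remark~\ref{rem:MetricOnGraphManifold}, we get $d_M(y,y')\ge \rho/K$. Therefore $C\le (CK/\rho)\,d_M(y,y')$, and the constant $L'=L+1+CK/\rho$ works.

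The only mildly delicate point is step two, namely confirming that the quasi-isometric embedding estimate from Corollary~\ref{cor:uniform constants} is really the right tool: it is precisely designed to say that $\tilde g(\tilde B)$ cannot ``drift too quickly'' in the fiber direction relative to how far it drifts in the base, which is exactly the content needed to compare the heights of $x$ and $x'$. Everything else is bookkeeping with the product metric and the positive separation $\rho$ of JSJ planes.
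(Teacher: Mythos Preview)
Your proof is correct and follows essentially the same approach as the paper: write everything in the product coordinates of $\tilde M=\tilde F\times\R$, use Corollary~\ref{cor:uniform constants} to bound $\abs{t-t'}$ in terms of $d_{\tilde F}(a,a')$, and then absorb the additive constant $C$ using the uniform lower bound $\rho$ on the distance between distinct JSJ planes together with the $K$--bilipschitz comparison of $d$ and $d_M$. Your triangle-inequality bookkeeping is in fact a bit cleaner than the paper's (which introduces signed ``displacements'' and obtains $L'=L+2+CK/\rho$ instead of your $L'=L+1+CK/\rho$), but the argument is the same in substance.
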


\begin{proof} 
Let $\rho$ and $K$ be the constants given by Remark~\ref{rem:MetricOnGraphManifold} and Remark~\ref{rem:constants:setting up} respectively, and let $D = \rho/K$. Let $L$ and $C$ be the constants given by Corollary~\ref{cor:uniform constants}. Let $L' = L + 2 + C/D$.

Let $a$ and $b$ be the projection points of $y$ and $y'$ to $\tilde{F}$ respectively. 
We write $\tilde{g}|_{\tilde{B}} = (\tilde{g}_1,\tilde{g}_2)$ where $\tilde{g}_1 \colon \tilde{B} \to \tilde{F}$ and $\tilde{g}_2 \colon \tilde{B} \to \R$. Since $\tilde{g}|_{\tilde{B}}$ is an embedding map and $x,x' \in \tilde{g}(\tilde{B})$, there exist $a',b' \in \tilde{B}$ such that $\tilde{g}(a') = x$ and $\tilde{g}(b') = x'$. It follows that $\tilde{g}_1(a') = a$ and $\tilde{g}_1(b') = b$. By Corollary~\ref{cor:uniform constants} we have
\[
   \bigabs{\tilde{g}_{2}(a') - \tilde{g}_{2}(b')} \le {L\,d_M \bigl (\tilde{g}_{1}(a'),\tilde{g}_{1}(b') \bigr ) + C} = L\,d_{M}(a,b) + C.
\]
Since $\rho \le d(a,b) \le K\,d_M(a,b)$, it follows that $D \le d_M(a,b)$. Therefore
\[
   \bigabs{\tilde{g}_2(a') - \tilde{g}_2(b')}
      \leq (L + C/D)d_{M}(a,b) \le (L + C/D)d_M(y,y').
\]

With respect to the orientation of the factor $\R$ of $\tilde{M}$, let $\Delta(y,a)$ and $\Delta(y',b)$ be the displacements of pairs of points $(y,a)$ and $(y',b)$ respectively. We would like to show that $\bigabs{\Delta(y,a) - \Delta(y',b)} \le 2\,d_M(y,y')$.
Indeed, let $s$ and $t$ be the real numbers such that $y = (a,s)$ and $y' = (b,t)$. We note that $\Delta(y,a) = -s$ if $s \ge 0$ and $\Delta(y,a) = s$ if $s \le 0$ as well as $\Delta(y',b) = -t$ if $t \ge 0$ and $\Delta(y',b) = t$ if $t \le 0$.
Since $d_M(a,b) \le d_M(y,y')$, it follows that $\bigabs{\Delta(y,a) - \Delta(y',b)} \le 2d_M(y,y')$. Moreover, we have that $d_M(y,x) = \bigabs{\tilde{g}_2(a') + \Delta(y,a)}$ and $d_M(y',x') = \bigabs{\tilde{g}_2(b') + \Delta(y',b)}$. Therefore the previous inequalities imply
\begin{align*}
d_M(y',x') - d_M(y,x) &\le \bigabs{\tilde{g}_2(a') - \tilde{g}_2(b')} + \bigabs{\Delta(y,a) - \Delta(y',b)}  \\
&\le (L + 2 + C/D)d_{M}(y,y') = L'\,d_M(y,y'). \qedhere
\end{align*}
\end{proof}

\begin{lem}[Crossing a JSJ plane]
\label{lem: application rule of sines}
Let $\tilde{M}$ and $\tilde{M}'$ be the two adjacent Seifert components. Let $d_M$ and $d_{M'}$ be the given product metrics on $\tilde{M}$ and $\tilde{M}'$ respectively. Let $\tilde{T} = \tilde{M} \cap \tilde{M}'$, and let $\alpha \subset \tilde{g}(\tilde{S}) \cap \tilde{T}$ be the line such that $\alpha$ universally covers a curve $g(c)$ for some $c$ in $\mathcal{T}_g$. We moreover assume that $\alpha$ is a straight line in $(\tilde{T},d_{M})$. For any two points $x$ and $z$ in the line $\alpha$, let $\overleftarrow{\ell} \subset (\tilde{T},d_M )$ and $\overrightarrow{\ell} \subset (\tilde{T},d_{M'} )$ be the Euclidean geodesics such that $x \in \overleftarrow{\ell}$ and $z \in \overrightarrow{\ell}$ and they project to fibers $\overleftarrow{f} \subset \overleftarrow{T}$ and $\overrightarrow{f} \subset \overrightarrow{T}$ respectively. Let $y$ be the unique intersection point of $\overleftarrow{\ell}$ and $\overrightarrow{\ell}$. Let $a$ and $b$ be the integers such that
\[
   \bigl[ g(c) \bigr] = a [\overleftarrow{f}] + b[\overrightarrow{f}] \quad\text{in} \quad
   H_1(T;\Z).
\] where $T$ is the JSJ torus obtained from gluing $\overleftarrow{T}$ to $\overrightarrow{T}$.
Then $d_M(y,x) = \bigabs{a/b}d_{M'}(y,z)$
\end{lem}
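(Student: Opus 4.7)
The plan is to exploit the deck group action of $\pi_1(T)\cong\Z^2$ on the plane $\tilde T$: both $(\tilde T,d_M)$ and $(\tilde T,d_{M'})$ are Euclidean planes on which the deck group acts by translations, but the two fiber subgroups play asymmetric roles (the left one has unit translation length in $d_M$, the right one has unit translation length in $d_{M'}$). I would first verify the identity in one canonical configuration, and then promote it to arbitrary pairs $(x,z)\in\alpha\times\alpha$ by affine similarity.

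First I would fix a base point $p\in\alpha$ and introduce the deck transformation $\tau\in\pi_1(T)$ corresponding to the homology class $[g(c)]=a[\overleftarrow f]+b[\overrightarrow f]$. Since $\alpha$ universally covers the loop $g(c)$, the transformation $\tau$ preserves $\alpha$ and acts on it by a translation, so $\tau(p)\in\alpha$. Because $\pi_1(T)$ is abelian, $\tau=\overleftarrow f^{\,a}\cdot\overrightarrow f^{\,b}$ as deck transformations, so the auxiliary point $q:=\overleftarrow f^{\,a}(p)$ satisfies $\tau(p)=\overrightarrow f^{\,b}(q)$.

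Next I would compute both sides in the special configuration $(x,z)=(p,\tau(p))$. The deck transformation $\overleftarrow f$ acts on $(\tilde T,d_M)$ as a unit translation along $\overleftarrow\ell$, because $\overleftarrow f$ has length $1$ in the product metric on $M_{j-1}$ that induces $d_M$; hence $q$ lies on $\overleftarrow\ell$ through $p$ and $d_M(p,q)=|a|$. Symmetrically, $\overrightarrow f$ acts on $(\tilde T,d_{M'})$ as a unit translation along $\overrightarrow\ell$, so $q$ also lies on $\overrightarrow\ell$ through $\tau(p)$ and $d_{M'}(q,\tau(p))=|b|$. Thus $q$ is precisely the intersection point of the two fiber lines, and the desired identity $d_M(y,x)=\bigabs{a/b}\,d_{M'}(y,z)$ holds for this particular pair.

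Finally, for arbitrary $x,z\in\alpha$ I would use an affine scaling argument. Because $a$ and $b$ are both nonzero, the three directions (the direction of $\alpha$, the fiber direction $\overleftarrow f$, and the fiber direction $\overrightarrow f$) in $\tilde T$ are pairwise distinct, so the triangle $(x,y,z)$ is determined up to homothety by the $\alpha$-displacement from $x$ to $z$. All such triangles are similar in $(\tilde T,d_M)$ and similar in $(\tilde T,d_{M'})$ separately; consequently $d_M(y,x)$ is a fixed multiple of the $d_M$-length of $[x,z]$, and $d_{M'}(y,z)$ is a fixed multiple of the $d_{M'}$-length of $[x,z]$. Since the $d_M$-length and $d_{M'}$-length along the single fixed direction of $\alpha$ are themselves proportional (the identity map $\overleftarrow E_j\to\overrightarrow E_j$ is affine), the ratio $d_M(y,x)/d_{M'}(y,z)$ is independent of $(x,z)$; evaluating at $(p,\tau(p))$ pins it down to $|a/b|$. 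The only subtle point, and the place most likely to produce a bookkeeping error, is keeping straight which of the two Euclidean metrics is being used to measure each segment, since $d_M$ and $d_{M'}$ differ on the common plane $\tilde T$; the affine similarity argument sidesteps this because each side of the triangle is measured consistently in only one metric.
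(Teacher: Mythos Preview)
Your proposal is correct and follows essentially the same approach as the paper: verify the identity on one fundamental segment of $\alpha$ (you via the deck transformation $\tau=\overleftarrow f^{\,a}\overrightarrow f^{\,b}$, the paper by citing the computation already done in the proof of Lemma~\ref{lem:LinearInSurface}), then extend to arbitrary $x,z\in\alpha$ by similarity of the resulting triangles. The only cosmetic difference is that the paper introduces an explicit conversion factor $\kappa$ between $d_M$ and $d_{M'}$ along the $\overrightarrow f$--direction before applying similar triangles in $(\tilde T,d_M)$, whereas your affine-scaling argument suppresses this constant by observing directly that the mixed ratio $d_M(y,x)/d_{M'}(y,z)$ is independent of the pair $(x,z)$.
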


\begin{proof}
Let $\kappa$ be the positive constant such that length of the fiber $\overrightarrow{f}$ with respect to the metric $d_M$ equals to $1/\kappa$. Let $\tilde{c}$ be a path lift of $c$ such that $\tilde{g}(\tilde{c}) \subset \alpha$. Let $x'$ and $z'$ be the initial point and the terminal point of $\tilde{g}(\tilde{c})$ respectively. Let $\overleftarrow{\ell'}$ and $\overrightarrow{\ell'}$ be the Euclidean geodesics in $(\tilde{T},d_M )$ and $(\tilde{T},d_{M'} )$ respectively such that $x' \in \overleftarrow{\ell'}$ and $z' \in \overrightarrow{\ell'}$ and both lines $\overleftarrow{\ell'}$ and $\overrightarrow{\ell'}$ project to fibers in $\overleftarrow{T}$ and $\overrightarrow{T}$ respectively. Let $y'$ be the unique intersection point of $\overleftarrow{\ell'}$ and $\overrightarrow{\ell'}$. It was shown in the proof of Lemma~\ref{lem:LinearInSurface} that $d_{M}(y',x') = \abs{a}$ and $d_{M'}(y',z') = \abs{b}$. By the definiton of $\kappa$, it follows that $d_{M'}(y',z') = \kappa\,d_{M}(y',z')$.

In the Euclidean plane $(\tilde{T},d_M)$, consider the similar triangles $\Delta(x,y,z)$ and $\Delta(x',y',z')$. Since $d_{M}(y,x)\big/d_{M}(y,z) = d_{M}(y',x')\big/d_{M}(y',z') = \kappa\,\bigabs{a\big/b}$, it follows that
$d_{M}(y,x) = \bigabs{a\big/b}\,\kappa\,d_{M}(y,z) = \bigabs{a\big/b}\,d_{M'}(y,z)$.
\end{proof}

\begin{proof}[Proof of Theorem~\ref{thm:UpperBound}]
We may assume that $N$ is a simple graph manifold for the same reason as in the first paragraph of the proof of Theorem~\ref{thm:lower}. Let $K$ be the constant given by Remark~\ref{rem:MetricOnGraphManifold}. Let $L$ and $C$ be the constants given by Corollary~\ref{cor:uniform constants}. Moreover, the constant $L$ can be enlarged so that $L \ge K$ and $L$ is greater than the constant $L'$ given by Lemma~\ref{lem:Inequality1}. Moreover, we assume that the base point $s_0$ belongs to a curve in the collection $\mathcal{T}_g$. 

For any $h \in \pi_1(S,s_0)$ such that $d \bigl (\tilde{x}_0,h(\tilde{x}_0) \bigr ) \le n$. We will show that  $d_{\tilde{S}}\bigl(\tilde{s}_0,h(\tilde{s}_0) \bigr)$ is bounded above by either a quadratic or exponential function in term of $n$. The theorem is confirmed by an application of Corollary~\ref{cor:GeometricDistortion}. We consider the following cases:

{\bf Case 1:} 
The points $\tilde{s}_0$ and $h(\tilde{s}_0)$ belong to the same block $\tilde{B}$. In this degenerate case, we will show that $d_{\tilde{S}}\bigl (\tilde{s}_0,h(\tilde{s}_0) \bigr) \preceq n$. Indeed, let $\tilde{M}$ be the Seifert component such that $\tilde{g}(\tilde{B}) \subset \tilde{M}$. Let $d_{\tilde{M}}$ be the given product metric on $\tilde{M}$. Since $\tilde{g}|_{\tilde{B}}$ is an $(L,C)$--quasi--isometric embedding by Corollary~\ref{cor:uniform constants}, it follows that 
\[
   d_{\tilde{g}(\tilde{B})}\bigl (\tilde{x}_0,h(\tilde{x}_0) \bigr ) \le Ld_{\tilde{M}}\bigl (\tilde{x}_0,h(\tilde{x}_0) \bigr ) + C \le L^{2}d\bigl (\tilde{x}_0,h(\tilde{x}_0)\bigr ) + C \le L^{2}n + C.
\]
Therefore,
\[
   d_{\tilde{S}}(\tilde{s}_0,h(\tilde{s}_0)) = d_{\tilde{g}(\tilde{S})}\bigl (\tilde{x}_0,h(\tilde{x}_0) \bigr )
     \leq {d_{\tilde{g}(\tilde{B})}\bigl (\tilde{x}_0,h(\tilde{x}_0) \bigr )}
      \le L^{2}n + C.
\]

{\bf Case 2:} 
The points $\tilde{s}_0$ and $h(\tilde{s}_0)$ belong to distinct blocks of $\tilde{S}$. 
Let $\mathcal{L}$ be the family of lines in $\tilde{S}$ that are lifts of curves of $\mathcal{T}_g$. Since we assume that $s_0$ belongs to a curve in the collection $\mathcal{T}_g$, thus there are distinct lines $\alpha$ and $\alpha'$ in $\mathcal{L}$ such that $\tilde{s}_0 \in \alpha$ and $h(\tilde{s}_0) \in \alpha'$. Let $e$ and $e'$ be the non-oriented edges in the tree $\mathbf{T}_{S}$ corresponding to the lines $\alpha$ and $\alpha'$ respectively. Consider the non backtracking path joining $e$ to $e'$ in the tree $\mathbf{T}_S$, with ordered vertices $v_0,v_1,\dots,v_{k-1}$ where $v_1$ is not a vertex on the edge $e$ and $v_{k-2}$ is not a vertex on the edge $e'$. Consider the corresponding vertices $\zeta(v_{0}),\dots,\zeta(v_{k-1})$ of the tree $\mathbf{T}_{N}$ (see Definition~\ref{defn:mapoftrees}). We denote the Seifert components corresponding to the vertices $\zeta(v_i)$ by $\tilde{M}_i$ with $i = 0,1, \cdots ,k-1$. We note that the Seifert components $\tilde{M}_i$ are distinct because $\zeta$ is injective by Proposition~\ref{prop:TreeBijective}. Let $d_i$ be the given product metric on the Seifert component $\tilde{M}_i$.

For convenience, relabel $\tilde{x}_0$ by $y_0$, and $h(\tilde{x}_0)$ by $y_k$. Let $\gamma$ be a geodesic in $(\tilde{N},d)$ joining $y_0$ to $y_k$.  Replace the path $\gamma$ by a new path $\xi$, described as follows. For $i = 1,2,\dots,k-1$, let $\tilde{T}_i = \tilde{M}_{i-1} \cap \tilde{M}_i$, and let $t_i = \sup \bigset{t \in [0,1]}{\gamma(t) \in \tilde{T}_i}$.  Let $y_i = \gamma(t_i)$.
Let $\xi_i$ be the geodesic segment in $(\tilde{M}_{i},d_i)$ joining $y_i$ to $y_{i+1}$.  Let $\xi$ be the concatenation $\xi_{0} \xi_{1} \cdots \xi_{k-1}$. Since $d_{i-1}(y_{i-1},y_i) \le L\,d(y_{i-1},y_i)$, it follows that $\abs{\xi} \le L\,\abs{\gamma} \le Ln$. Here $|\cdot|$ denotes the length of a path with respect to the metric $d$.

For each $i$, let $\overleftarrow{\ell_i}$ and $\overrightarrow{\ell_i}$ be the Euclidean geodesics in $(\tilde{T}_i,d_{i-1})$ and $(\tilde{T}_i,d_i)$
passing through $y_i$ such that they project to fibers $\overleftarrow{f_i} \subset \overleftarrow{T_i}$ and $\overrightarrow{f_i} \subset \overrightarrow{T_i}$ respectively . Let $\alpha_i = \tilde{g}(\tilde{S})\cap{\tilde{T}_i}$.
By Corollary~\ref{cor: fiber intersection one point}, the lines $\overleftarrow{\ell_i}$ and $\overrightarrow{\ell_i}$ intersect $\alpha_i$ at points, denoted by $x_i$ and $z_i$ respectively.
Let $\rho$ be the minimum distance between two distinct JSJ planes in $\tilde{N}$ (see Remark~\ref{rem:constants:setting up}).

{\bf Claim~1:} There exists a linear function $J$ not depending on the choices of $n$, $h$, or $\xi$ such that
\[
   d_{\tilde{g}(\tilde{S})} \bigl( y_{0},y_{k} \bigr)
   \le J \bigl (\sum_{i=1}^{k-1} d_{i}(y_i,z_i) + n \bigr ).
\]
Let $z_0 = y_0$ and $z_k = y_k$. We first show that for each $i = 0,\dots, k-1$ then
\[
   d_{i}(z_{i},z_{i+1})
   \le L^{2} \bigl( d_{i}(z_{i},y_{i}) +  \abs{\xi_{i}} + d_{i+1}(y_{i+1},z_{i+1}) \bigr)
\]
Indeed, 
\begin{align*}
    d_{i}(z_{i},z_{i+1}) &\le d_{i}(z_{i},y_{i}) + d_{i}(y_{i},y_{i+1}) + d_{i}(y_{i+1},z_{i+1})\\
    &\le d_{i}(z_{i},y_{i}) + L\abs{\xi_{i}} + d_{i}(y_{i+1},z_{i+1})\\
    &\le d_{i}(z_{i},y_{i}) + L\abs{\xi_{i}} + Ld(y_{i+1},z_{i+1})\\
    &\le d_{i}(z_{i},y_{i}) + L\abs{\xi_{i}} + L^{2}d_{i+1}(y_{i+1},z_{i+1}) \qquad \text {because $L \ge 1$}\\
    &\le L^{2} \bigl (d_{i}(z_{i},y_{i}) +  \abs{\xi_{i}} + d_{i+1}(y_{i+1},z_{i+1}) \bigr )
\end{align*}
Using Corollary~\ref{cor:uniform constants} we obtain
\begin{align*}
   d_{\tilde{g}(\tilde{S})}\bigl (y_{0},y_{k}\bigr ) &\le \sum_{i=0}^{k-1}d_{\tilde{g}(\tilde{S})}(z_i,z_{i+1}) \le  \sum_{i=0}^{k-1}Ld_{i}(z_{i},z_{i+1}) +kC\\ &\le 2L^{3}\sum_{i=0}^{k}d_{i}(z_{i},y_{i}) + L^{3}\abs{\xi} +kC \\
   &= 2L^{3}\sum_{i=1}^{k-1}d_{i}(z_{i},y_{i}) + L^{3}\abs{\xi} +kC 
\end{align*}
Since $d \bigl (\tilde{x}_0,h(\tilde{x}_0) \bigr ) \le n$, it follows that $k \rho \le n$, and therefore $k \le n/\rho$. We note that $\abs{\xi} \le Ln$. Letting $A = \max \{2L^{3},L^{4} + C/\rho \}$, and $J(x) = Ax$, Claim~1 is confirmed.

In order to complete the proof, it suffices to find an appropriate upper bound for the sum appearing in the conclusion of Claim~1.
In the general case, we need an exponential upper bound, but in the special case of trivial dilation we require a quadratic upper bound.

By Lemma~\ref{lem:Inequality1} we immediately see
\begin{equation}
\tag{$*$}
\label{eq:blacklozenge}
    d_{i-1}\bigl (y_{i},x_{i}\bigr ) \le d_{i-1}\bigl (y_{i-1},z_{i-1}\bigr ) + L^{2}\bigabs{\xi_{i-1}}.
\end{equation}
Observe that $\alpha_i$ universally covers a closed curve $g(c_i) \in T_i$ for some $c_i \in \mathcal{T}_{g}$ and
$\bigl[ g(c_i) \bigr] = a_{i}\, [\overleftarrow{f_i}] + b_{i}\,[\overrightarrow{f_i}]$ in $H_1(T_i;\Z)$ for some $a_i,b_i \in \Z$. By Lemma~\ref{lem: application rule of sines}, we conclude that
\begin{equation}
\tag{$\dag$}
\label{eq:ddagger}
   d_i(y_i,z_i) = 
   \bigabs{b_i/a_i}\, d_{i-1}(y_i,x_i).
\end{equation}

{\bf Claim 2:}
Suppose $g$ is not virtually embedded.
There exists a function $F$ not depending on the choices of $n$, $h$, or $\xi$ such that
\[
   \sum_{j=1}^{k-1}d_{j}(y_j,z_j) \le F(n)
\]
and $F(n) \sim 2^n$.

Since $g$ is not virtually embedded, the governor $\epsilon = \epsilon(g)$, defined in Remark~\ref{rem:constants:setting up}, must be strictly greater than $1$.
We will show by induction on $j=0,\dots k-1$ that
\[
    d_j(y_j,z_j) \leq L^{3} n \sum_{i=1}^{j}\epsilon^{i}
\]
The base case of $j=0$ is trivial since $y_0=z_0$, so both sides of the inequality equal zero.
For the inductive step, we use \eqref{eq:blacklozenge}, \eqref{eq:ddagger}, and the facts $\abs{\xi_{j-1}} \le Ln$ and $\abs{b_j/a_j} \le \epsilon$ to see that
\begin{align*}
   d_j\bigl( y_j,z_j \bigr) &=\bigabs{b_j/a_j}\, d_{j-1}(y_j,x_j)\\
   &\le \epsilon d_{j-1}(y_j,x_j) \\
   &\le \epsilon \bigl (d_{j-1}(y_{j-1},z_{j-1}) + L^{2}\abs{\xi_{j-1}} \bigr ) \\
   &\le \epsilon d_{j-1}(y_{j-1},z_{j-1}) +\epsilon L^{3}n  \\
   &\le \epsilon(\epsilon + \epsilon^2 + \cdots + \epsilon^{j-1})L^3 n + \epsilon L^3 n \\
   &\le (\epsilon + \epsilon^2 + \cdots + \epsilon^j)L^3 n.
\end{align*}
Summing this geometric series gives
\[
   d_j(y_j,z_j)
   \le \frac{L^{3}n}{\epsilon -1}\epsilon^{j+1}.
\]
Summing a second time over $j$, we obtain
\[
   \sum_{j=1}^{k-1} d_j(y_j,z_j)
      \le \frac{L^{3}n}{\epsilon -1} 
      (\epsilon^2 + \cdots + \epsilon^k)
      \le \frac{\epsilon L^{3}}{(\epsilon-1)^2}
      n \epsilon^k
      \le \frac{\epsilon L^{3}}{(\epsilon-1)^2}n\epsilon^{n/\rho}
\]
which is equivalent to an exponential function of $n$, establishing Claim~2.
(Recall that for any polynomial $p(n)$, we have $p(n)2^n \le 2^n 2^n = 2^{2n}$ for sufficiently large $n$.)




{\bf Claim 3:} Assume that $g$ is virtually embedded. There exists a quadratic function $Q$ not depending on the choices of $n$, $h$, or $\xi$ such that 
\[
\sum_{j=1}^{k-1}d_{j}(y_j,z_j) \le Q(n).
\]
For any $1 \le i \le j$, let 
\[
\Theta_{i,j} = \left| \frac{b_{i}}{a_{i}} \right|\cdot \left| \frac{b_{i+1}}{a_{i+1}} \right|  \cdots  \left| \frac{b_{j}}{a_{j}} \right|
\]
Let $\Lambda$ be the constant given by Proposition~\ref{prop:upperlambda}. In order to prove Claim~3, we mimic the argument of Claim~2---using $\Theta_{i,j}$ in place of the terms of the form $\epsilon^{\ell}$. 
This change gives tighter results than those obtained in Claim~2, since $\Theta_{i,j}$ is bounded above by the constant $\Lambda$.
This upper bound applies only in the virtually embedded case, as explained in Proposition~\ref{prop:upperlambda}.
The key recursive property satisfied by $\Theta_{i,j}$ is the following:
\[
\Theta_{i,j-1} \left|\frac{b_{j}}{a_{j}} \right|  = \Theta_{i,j}.
\]

We will show by induction on $j =0,\dots,k-1$ that
\[
 d_{j}(y_j,z_j) \le L^{2} \sum_{i=1}^{j}\abs{\xi_{i-1}} \Theta_{i,j}
 \]
As before, the base case $j=0$ is trivial. For the inductive step, we use \eqref{eq:ddagger} and \eqref{eq:blacklozenge} to see that
\begin{align*}
   d_j(y_j,z_j)
   &=  d_{j-1}(y_j,x_j)\,\bigabs{b_j/a_j} \\
   &\le \bigl (d_{j-1}(y_{j-1},z_{j-1}) +  L^{2}\abs{\xi_{j-1}} \bigr )\,\bigabs{b_j/a_j} \\
   &= d_{j-1}(y_{j-1},z_{j-1})\,\bigabs{b_j/a_j} + L^{2}\,\abs{\xi_{j-1}}\,\bigabs{b_j/a_j}\\
   &\le \Bigl( L^{2} \sum_{i=1}^{j-1}\abs{\xi_{i-1}}\,\Theta_{i,j-1} \Bigr ) \,\bigabs{b_j/a_j}  + L^{2}\,\abs{\xi_{j-1}}\,\bigabs{b_j/a_j} \\
   &= L^{2}\sum_{i=1}^{j-1}\abs{\xi_{i-1}}\Theta_{i,j} + L^{2}\abs{\xi_{j-1}}\Theta_{j,j} \\
   &= L^{2}\sum_{i=1}^{j}\abs{\xi_{i-1}}\Theta_{i,j}
\end{align*}
Since $\Theta_{i,j}$ is bounded above by $\Lambda$, and $\sum_{i=1}^{j}\abs{\xi_{i-1}} \le \abs{\xi} \le Ln$, we have 
\[
d_{j}(y_j,z_j) \le L^{2}\sum_{i=1}^{j}\abs{\xi_{i-1}}\Theta_{i,j} \le \Lambda L^{3}n.
\]
Summing over $j$, we obtain
\[
   \sum_{j=1}^{k-1}d_{j}(y_j,z_j) \le (k-1)\Lambda L^{3} n \le \left(\frac{n}{\rho} -1 \right) \Lambda L^{3}n
\]
which is a quadratic function of $n$, establishing Claim~3.

If $g$ is not virtually embedded, we combine Claim~1 and Claim~2 to get an exponential upper bound for $d_{\tilde{g}(\tilde{S})}(y_0,y_k)$. In the virtually embedded case, we combine Claim~1 and Claim~3 to get a quadratic upper bound.
The theorem now follows from Corollary~\ref{cor:GeometricDistortion}.
\end{proof}

\section{Fiber surfaces have quadratic distortion}
\label{sec:FiberQuadratic}

In this section we show in detail how results of Gersten and Kapovich--Leeb \cite{Gersten94Quadratic,KapovichLeeb98} can be combined with Thurston's geometric description of $3$--manifolds that fiber over the circle to prove Theorem~\ref{thm:FiberQuadratic}.
This section is an elaboration of ideas that are implicitly used by Kapovich--Leeb in \cite{KapovichLeeb98} but not stated explicitly there.
As explained in the introduction, Theorem~\ref{thm:FiberQuadratic} is the main step in an alternate proof of the virtually embedded case of Theorem~\ref{thm:main thm}.

The following theorem relates distortion of normal subgroups to the notion of divergence of groups.
Roughly speaking, divergence is a geometric invariant that measures the circumference of a ball of radius $n$ as a function of $n$.
(See \cite{Gersten94Quadratic} for a precise definition.)

\begin{thm}[\cite{Gersten94Quadratic}, Thm.~4.1]
\label{thm:GerstenLower}
If $G = H \rtimes_\phi \Z$, where $G$ and $H$ are finitely generated, then the divergence of $G$ is dominated by the distortion $\Delta^G_H$.
\end{thm}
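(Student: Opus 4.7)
The plan is to bound the divergence of $G$ from above by exhibiting, for each $n$, an explicit detour path connecting two antipodal points at $G$--distance $n$ from the identity, of length controlled by $\Delta_{H}^{G}$, that avoids the ball $B(e, n/2)$. Fix a finite generating set for $G$ consisting of a finite generating set $S_{H}$ for $H$ together with a generator $t$ for the $\Z$ factor. The quotient homomorphism $\pi \colon G \to \Z$ killing $H$ is $1$--Lipschitz, so every point of the coset $H t^{k}$ has $G$--distance at least $\abs{k}$ from $e$. I take the two antipodal points to be $t^{n}$ and $t^{-n}$.

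First I would pick $h \in H$ with $\abs{h}_{G} \in \{\lceil 3n/2 \rceil, \lceil 3n/2 \rceil + 1\}$. Such an element exists because $H$ is infinite and finitely generated (the case $H$ finite makes $G$ virtually cyclic, so the conclusion is trivial), so walking along an $H$--word for an $H$--element of sufficiently large $G$--length hits every integer $G$--length value below the maximum, as $G$--length changes by at most $1$ per $S_{H}$--step. The detour path $\sigma_{n}$ is then the concatenation of three segments: a segment $\sigma^{(1)}_{n}$ from $t^{n}$ to $h t^{n}$ confined to the coset $H t^{n}$; a segment $\sigma^{(2)}_{n}$ of length $2n$ from $h t^{n}$ to $h t^{-n}$ along the coset $h \langle t \rangle$; and a segment $\sigma^{(3)}_{n}$ from $h t^{-n}$ to $t^{-n}$ confined to $H t^{-n}$.

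The core observation for $\sigma^{(1)}_{n}$ is that right--multiplication by $s \in S_{H}$ sends $t^{n}$ to $t^{n} s = \phi^{n}(s)\, t^{n}$, so any edge--path in the Cayley graph of $G$ from $t^{n}$ to $h t^{n}$ that stays in the coset $H t^{n}$ corresponds to a word in $S_{H}$ representing the element $\phi^{-n}(h) \in H$, and the shortest such path has $G$--length equal to $\bigabs{\phi^{-n}(h)}_{H}$. Writing $\phi^{-n}(h) = t^{-n} h t^{n}$ in $G$ gives $\bigabs{\phi^{-n}(h)}_{G} \le 2n + \abs{h}_{G} \le 7n/2 + 2$, so the definition of distortion yields $\bigabs{\phi^{-n}(h)}_{H} \le \Delta_{H}^{G}(7n/2 + 2)$. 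An analogous bound holds for $\sigma^{(3)}_{n}$, so the total length of $\sigma_{n}$ is at most $2\,\Delta_{H}^{G}(7n/2 + 2) + 2n$, which is equivalent to $\Delta_{H}^{G}(n)$ since $\Delta_{H}^{G}$ is at least linear.

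Avoidance is then routine: every point on $\sigma^{(1)}_{n}$ or $\sigma^{(3)}_{n}$ lies in $H t^{\pm n}$, so has $G$--distance at least $n$ from $e$; and a point $h t^{j}$ on $\sigma^{(2)}_{n}$ satisfies $\bigabs{h t^{j}}_{G} \ge \abs{h}_{G} - \abs{j} \ge 3n/2 - n = n/2$ by the triangle inequality. I expect the main conceptual step to be the identification of the length of the shortest coset path $\sigma^{(1)}_{n}$ with the $H$--word length of the conjugate $\phi^{-n}(h)$; this identification is precisely what forces the distortion function to appear in the upper bound for divergence, and the rest of the argument is bookkeeping.
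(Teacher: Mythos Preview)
The paper does not give its own proof of this statement; it simply cites Gersten's paper \cite{Gersten94Quadratic}. So there is nothing to compare against, and I will assess your argument on its own merits.

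Your construction is the natural one and the estimates are correct: the coset $Ht^{\pm n}$ lies outside $B(e,n)$ since the retraction $\pi\colon G\to\Z$ is $1$--Lipschitz; the path in $Ht^{n}$ from $t^{n}$ to $ht^{n}$ has length $\bigabs{\phi^{-n}(h)}_{H}\le \Delta_{H}^{G}(7n/2+2)$; and the middle $t$--segment stays outside $B(e,n/2)$ by the choice $\abs{h}_{G}\approx 3n/2$. The intermediate-value argument for the existence of such an $h$ is fine.

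There is one point you should address explicitly. You only construct a detour for the \emph{specific} pair $(t^{n},t^{-n})$. Under Gersten's original definition (divergence along a fixed bi-infinite geodesic, here the axis of $t$) this is exactly what is required, and your proof is complete. But if ``divergence of $G$'' is read as the now-standard group invariant---a supremum over all pairs $x,y$ on the $n$--sphere---then a detour for one pair does not bound it from above. The fix is routine: given arbitrary $x$ with $\abs{x}_{G}=n$, write $x=h_{x}t^{k_{x}}$ and first travel along the $t$--direction from $x$ to $h_{x}t^{\pm n}$ (choosing the sign of $\pm n$ to agree with that of $k_{x}$); one checks using $\abs{h_{x}t^{j}}_{G}\ge\max\bigl(\abs{j},\,n-\abs{j-k_{x}}\bigr)$ that this segment stays outside $B(e,n/2)$. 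Then move within $Ht^{\pm n}$ to $t^{\pm n}$ by a path of length at most $\Delta_{H}^{G}(4n)$, exactly as in your argument. Concatenating with your detour from $t^{n}$ to $t^{-n}$ handles all pairs. You should either state which definition of divergence you are using or include this extension.
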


Let $H$ be generated by a finite set $\mathcal{T}$.  An automorphism $\phi \in \Aut(H)$ has \emph{polynomial growth} of degree at most $d$ if there exist constants $\alpha,\beta$ such that \[
   \bigabs{\phi^i(t)}_{\mathcal{T}} \le \alpha n^d + \beta
\]
for all $t \in \mathcal{T}$ and all $i$ with $\abs{i} \le n$.

Gersten claims the following result in the case that $H$ is a free group.  However his proof uses only that $H$ is finitely generated, so we get the following result using the same proof.

\begin{thm}[\cite{Gersten94Quadratic}, Prop.~4.2]
\label{thm:GerstenUpper}
If $G = H \rtimes_\phi \Z$, where $G$ and $H$ are finitely generated and $\phi \in \Aut(H)$ has polynomial growth of degree at most $d$, then $\Delta_H^G \preceq n^{d+1}$.
\end{thm}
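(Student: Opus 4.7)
The plan is to exploit the semidirect product structure directly. Choose generators $\mathcal{S} = \mathcal{T} \cup \{t\}$ for $G$, where $t$ generates the $\Z$ factor and acts on $H$ by conjugation via $\phi$, so that $t^k u t^{-k} = \phi^k(u)$ for every $u \in H$. Any element $h \in H$ with $\abs{h}_{\mathcal{S}} \le n$ is represented by a word $w = s_1 s_2 \cdots s_n$ of length at most $n$ in the generators $\mathcal{S}^{\pm 1}$, and my goal is to bound $\abs{h}_{\mathcal{T}}$ by a multiple of $n^{d+1}$.

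First I would push every occurrence of $t^{\pm 1}$ in $w$ to the right using the relation above. This converts $w$ into a normal-form expression $h' t^m$, where $h'$ is a product of conjugated generators and $m \in \Z$ records the total exponent of $t$. Since $w$ represents an element of the normal subgroup $H$, we must have $m = 0$ and $h' = h$. The resulting expression has the form
\[
   h = \phi^{k_1}(u_1)\, \phi^{k_2}(u_2) \cdots \phi^{k_\ell}(u_\ell),
\]
where each $u_j \in \mathcal{T}^{\pm 1}$ is one of the original $\mathcal{T}$-letters appearing in $w$, the integer $k_j$ is the signed count of $t$-letters occurring to the left of $u_j$ in $w$, and $\ell$ is the number of $\mathcal{T}$-letters in $w$. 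In particular, both $\ell \le n$ and $\abs{k_j} \le n$ for every $j$.

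Now I would apply the polynomial growth hypothesis to each factor. By assumption there are constants $\alpha,\beta$ such that $\bigabs{\phi^i(u)}_{\mathcal{T}} \le \alpha\,\abs{i}^d + \beta$ for every $u \in \mathcal{T}$ and every $i \in \Z$; this also handles $u \in \mathcal{T}^{-1}$ since $\phi^i(u^{-1}) = \phi^i(u)^{-1}$ has the same $\mathcal{T}$-length. Applying the triangle inequality for the word metric on $H$ then yields
\[
   \abs{h}_{\mathcal{T}}
   \le \sum_{j=1}^{\ell} \bigabs{\phi^{k_j}(u_j)}_{\mathcal{T}}
   \le \ell \bigl( \alpha n^d + \beta \bigr)
   \le n \bigl( \alpha n^d + \beta \bigr),
\]
which is dominated by $n^{d+1}$, establishing $\Delta_H^G \preceq n^{d+1}$.

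There is no serious obstacle; the argument is essentially bookkeeping. The one point requiring care is verifying that every exponent $k_j$ arising from the push-to-the-right procedure is bounded in absolute value by the number of $t$-letters in $w$, hence by $n$. This is immediate once one tracks the rewriting one letter at a time, so the proof reduces to the combinatorial observation above together with one invocation of the polynomial growth bound.
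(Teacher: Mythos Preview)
Your argument is correct and is precisely the standard proof. The paper does not give its own proof of this theorem; it merely cites Gersten's original argument in \cite{Gersten94Quadratic}, noting that although Gersten stated it for free $H$, the proof uses only finite generation. Your normal-form rewriting is exactly that argument, so there is nothing further to compare.
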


\begin{proof}[Proof of Theorem~\ref{thm:FiberQuadratic}]
Let $N$ be a graph manifold that fibers over $S^1$ with surface fiber $S$.  Then $N$ is the mapping torus of a homeomorphism $f \in \Aut(S)$.
In particular, if we let $G = \pi_1(N)$ and $H = \pi_1(S)$, then $G = H \rtimes_\phi \Z$, where $\phi \in \Aut(H)$ is an automorphism induced by $f$.  Passing to finite covers, we may assume without loss of generality that $N$ and $S$ are orientable and that the map $f$ is orientation preserving.

Kapovich--Leeb show that the divergence of the fundamental group of any graph manifold is at least quadratic \cite[\S 3]{KapovichLeeb98}. Therefore by Theorem~\ref{thm:GerstenLower} the distortion of $H$ in $G$ is also at least quadratic.

By Theorem~\ref{thm:GerstenUpper}, in order to establish a quadratic upper bound for $\Delta_H^G$, it suffices to show that $\phi$ has linear growth.
We will first apply the Nielsen--Thurston classification of surface homeomorphisms to the map $f$ (see, for example, Corollary~13.2 of \cite{FarbMargalit12}).
By the Nielsen--Thurston theorem, there exists a multicurve $\{c_1,\dots,c_k\}$ with the following properties. The curves $c_i$ have disjoint closed annular neighborhoods $S_1,\dots,S_k$.  Let $S_{k+1}, \dots, S_{k+\ell}$ be the closures of the connected components of $S - \bigcup_{i=1}^k S_i$.  Then there is a map $g$ isotopic to $f$ and a positive number $m$ such that $g^m$ leaves each subsurface $S_i$ invariant. Furthermore $g^m$ is a product of homeomorphisms $g_1\cdots g_{k+\ell}$ such that each $g_i$ is supported on $S_i$.  For $i=1,\dots,k$, the map $g_i$ (supported on the annulus $S_i$) is a power of a Dehn twist about $c_i$.  For $i = k+1,\dots,k+\ell$, each $g_i$ is either the identity or a map that restricts to a pseudo-Anosov map of $S_i$.

Consider the mapping torus $\hat{N}$ for the homeomorphism $g^m$ of $S$, which finitely covers $N$.  Apply Thurston's geometric classification of mapping tori to $\hat{N}$ to conclude that the family of tori $c_i \times S^1$ in the mapping torus $\hat{N}$ is equal to the family of JSJ tori of $\hat{N}$.  It follows that for each $i = k+1,\dots,k+\ell$ the map $g_i$ is equal to the identity on $S$. Indeed if any $g_i$ were pseudo-Anosov, then the corresponding JSJ component of $\hat{N}$ would be atoroidal and hyperbolic, which is impossible in a graph manifold.

Therefore $g^m$ is a product of powers of Dehn twists about disjoint curves.  In particular the automorphism $\phi^m$ has linear growth.  Clearly $\phi$ itself must also have linear growth.  Therefore the distortion of $H$ in $G$ is at most quadratic as desired.
\end{proof}


\bibliographystyle{alpha}
\bibliography{Hruska_Nguyen}
\end{document}